\newtheorem{Thm}{Theorem}[section]
\newtheorem{Cor}[Thm]{Corollary}
\newtheorem{Lem}[Thm]{Lemma}
\newtheorem{Prop}[Thm]{Proposition}
\def\ldots{\mathinner{\ldotp\ldotp\ldotp}}
\def\cdots{\mathinner{\cdotp\cdotp\cdotp}}
\def \Sp{\text{Sp}}
\begin{document}

\title[A new approach to Ramsey-type games]{A new approach to the Ramsey-type games and the Gowers dichotomy in F-spaces }

\author{G. Androulakis}
\address{Department of Mathematics \\
University of South Carolina \\
Columbia, SC 29208}

\email{giorgis@math.sc.edu}

\author{S. J. Dilworth}
\address{Department of Mathematics \\
University of South Carolina \\
Columbia, SC 29208}

\email{dilworth@math.sc.edu}

\author{N. J. Kalton}
\address{Department of Mathematics \\
University of Missouri-Columbia \\
Columbia, MO 65211 }

\email{nigel@math.missouri.edu}

\subjclass{46A16, 91A05, 91A80}

\thanks{The second author was supported by NSF grant DMS-0701552. The third author was supported by NSF grant DMS-0555670}

\begin{abstract} We give a new approach to the Ramsey-type results of Gowers on block bases in Banach spaces and apply our results to prove the Gowers dichotomy in F-spaces.
\end{abstract}

\maketitle

\section{Introduction}  Our aim in this note is to establish the Gowers dichotomy \cite{Gowers1996} in a general F-space  (complete metric linear space).  We say that an F-space $X$ is {\it hereditarily indecomposable} if it is impossible to find two {\it separated} infinite-dimensional closed subspaces $V,W$, i.e. such that $V\cap W=\{0\}$ and $V+W$ is closed (or equivalently that the natural projection from $V+W$ onto $V$ is continuous).
Our main result is that an F-space either contains an unconditional basic sequence or an infinite-dimensional HI subspace.
 In order to prove such a result we give a new and, we hope, interesting approach to the Gowers Ramsey-type result about block bases in a Banach space.  We now state this result (terminology is explained in \S \ref{countable} and in \cite{Gowers2002}, \cite{Gowers2003}):

 \begin{Thm} [\cite{Gowers1996},\cite{Gowers2002}, \cite{Gowers2003}] \label{Th:G1}
Let $X$ be a Banach space with a basis. Let $\partial B_X$ denote the unit sphere of $X$ i.e.
$\partial B_X= \{ x \in X: \| x \| =1 \}$. Let $\sigma \subseteq \Sigma_{<\infty}(\partial B_X)$. Let $\Theta = (\theta_i)_i$ be a sequence
 of positive numbers. If $\sigma$ is large then there exists a block subspace $Y$ of $X$ such that
$\sigma_{\Theta}$ is strategically large for $Y$, where $\sigma_{\Theta}$ is the set of all finite block bases $\{u_1,\ldots,u_n\}$ such that for some $\{v_1,\ldots,v_n\}\in\sigma$ we have $\|u_i-v_i\|<\theta_i$.
\end{Thm}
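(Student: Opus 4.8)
The plan is to reduce the statement to a combinatorial game on a countable tree and then to run a Galvin--Prikry / Nash--Williams style ``combinatorial forcing'' on the block subspaces of $X$, the whole argument being closed off by a fusion. First I would fix a countable family $\Delta$ of normalized block vectors that is $\Theta$-dense, in the sense that every normalized block vector is within $\theta_i$ (at the relevant coordinate $i$) of some member of $\Delta$, together with the countable cofinal collection of tail block subspaces spanned by $\Delta$. Restricting the subspace player to this collection and the vector player to $\Delta$ turns the Gowers game into a game on a countable, countably branching tree whose winning set for the vector player --- ``some initial segment of the play lands in $\sigma_\Theta$'' --- is \emph{open}. The role of $\Theta$ is exactly to supply two perturbation scales: since $\sigma\subseteq\sigma_\Theta$, a $\Delta$-play that meets $\sigma$ already witnesses $\sigma_\Theta$, while conversely every genuine block sequence lies within $\Theta$ of a $\Delta$-one, so that controlling the countable game controls the real one up to the allowed slack.

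Next I would set up the forcing. For a block subspace $Y$ in the countable family and a legal finite position $s$ (a block sequence of $\Delta$-vectors), say that $Y$ \emph{accepts} $s$ if the vector player has a winning strategy from position $s$ in the Gowers game inside $Y$ with target $\sigma_\Theta$, and that $Y$ \emph{rejects} $s$ if no block subspace of $Y$ accepts $s$. Two facts are immediate: rejection is hereditary, so that if $Y$ rejects $s$ then every block subspace of $Y$ rejects $s$; and for every pair $(Y,s)$ there is a block subspace $Z\le Y$ \emph{deciding} $s$, since if $Y$ fails to reject $s$ then by definition some $Z\le Y$ accepts it. Note also that any $s\in\sigma_\Theta$ is accepted by every $Y$ (the game is already won), so rejected positions never lie in $\sigma_\Theta$.

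Then comes the fusion. Enumerating all finite positions as $(s_k)_k$ and applying the decision step repeatedly while diagonalising against the tail structure, I would build a decreasing sequence of block subspaces whose fusion $Y_\infty$ \emph{stably} decides every $s_k$ (each decision being already forced on a tail of $Y_\infty$). Arranging the construction so that the limiting subspace genuinely inherits, and stabilises, each decision --- reconciling the bookkeeping of the subspace player's moves with the passage to the diagonal --- is the step I expect to be the main obstacle, as it is in all arguments of this kind; it is precisely this stability that will let the persistence step below go through.

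Finally I would read off the root $\emptyset$, which $Y_\infty$ now decides. If $Y_\infty$ accepts $\emptyset$, then the vector player has a winning $\Delta$-strategy on $Y_\infty$; transferring it through the net --- against an arbitrary subspace presented by the opponent, the vector player answers with a nearby $\Delta$-vector and charges the discrepancy to the $\Theta$-slack --- yields a genuine winning strategy in the full Gowers game on $Y_\infty$, so $\sigma_\Theta$ is strategically large for $Y=Y_\infty$ and we are done. If instead $Y_\infty$ rejects $\emptyset$, I would prove the persistence lemma: whenever $Y_\infty$ rejects a position $s$, there is a tail block subspace $W\le Y_\infty$ such that $Y_\infty$ rejects $s^{\frown}u$ for every $u\in W\cap\Delta$ --- for otherwise one could amalgamate, move by move, the accepting strategies for the various one-step extensions into a single winning strategy from $s$, contradicting rejection. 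Iterating this lemma along a fusion produces a block subspace $Z\le Y_\infty$ every $\Delta$-block sequence of which has all its initial segments rejected, hence avoids $\sigma_\Theta$; by the density of $\Delta$ and the inclusion $\sigma\subseteq\sigma_\Theta$ this forces $Z$ to contain no member of $\sigma$ whatsoever, contradicting the largeness of $\sigma$. So the rejecting alternative is impossible and the accepting one proves the theorem. Besides the fusion, the two points demanding the most care are this transfer of strategies back and forth across the net and the amalgamation of subspaces hidden inside the persistence lemma.
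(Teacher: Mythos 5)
Your plan is essentially Gowers's original combinatorial-forcing argument (the route of \cite{Gowers2002} and of Bagaria and L\'opez-Abad): discretize by a countable net $\Delta$, define acceptance and rejection of finite positions, stabilize all decisions by a fusion, and at the root either read off a winning strategy or iterate a persistence lemma to manufacture a block subspace meeting no element of $\sigma$, contradicting largeness. That route does prove the normalized statement, but it is deliberately not the route taken here: the paper derives Theorem \ref{Th:G1} from Theorem \ref{first} by replacing $\sigma$ with the admissible function $f(u_1,\ldots,u_n)=\min\bigl(1,\,d((u_1,\ldots,u_n),\sigma)\bigr)$ for a metric weighted by the $\theta_i$, so that $\{f<\epsilon\}\subseteq\sigma_{\Theta}$ and largeness of $\sigma$ forces $f'_V(\emptyset)=0$ for every block subspace $V$. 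Your accept/reject dichotomy then becomes the quantitative alternative of Proposition \ref{admissible} (at each move one can nearly minimize either $f$ or the derived function $f'$), your fusion becomes the Stabilization Lemma \ref{stabilization} (a transfinite induction on the functions $f'_V$ over a separable domain), and the detection of a winning initial segment is handled by the penalized function $h=f+2(\epsilon-2g)_+$ in the proof of Theorem \ref{first}. What the paper's formulation buys is precisely the two steps you flag as the main obstacles: because $f$ is uniformly $\mathcal T_{bx}$-continuous there is no net to transfer strategies across and no geometric splitting of the $\Theta$-budget over the levels of the game tree, and the argument makes sense with no topology on $E$ at all, which is what later permits the F-space applications and the non-normalized case. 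What your route buys is combinatorial transparency and proximity to the classical Ramsey arguments. If you do carry your plan out, one point in the sketch needs genuine repair rather than bookkeeping: a generic block subspace $W$ contains no vector of a fixed countable net, so both the persistence lemma and the final contradiction with largeness must be phrased in terms of $\Delta$-vectors lying within a prescribed distance of $W$ (equivalently, acceptance must be shown stable under perturbation of the position at a finer scale $\Theta'$), and the accepting player must always answer with a vector \emph{of} the opponent's subspace that is merely close to an accepted $\Delta$-vector, with all such errors summed against $\Theta$.
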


In \cite{Gowers2002} and \cite{Gowers2003} the statement of Theorem~\ref{Th:G1} is announced for
$\partial B_X$ replaced by the unit ball except the origin, i.e. $B=B_X\setminus\{0\}=\{ x \in X: 0 < \| x \| \leq 1 \}$.
There appears to be a slight problem in the non-normalized case in
\cite{Gowers2002}*{page 805, line -9} and \cite{Gowers2003}*{page 1092, line -2}. Theorem~\ref{Th:G1} (including the non-normalized case)
follows from our Theorem~\ref{first}.

Gowers also considers an infinite version of the same result (Theorem 4.1 of \cite{Gowers2002}):

\begin{Thm} [\cite{Gowers2002}, \cite{Gowers2003}] \label{Th:G2}
Let $X$ be a Banach space with a basis.  Let $\sigma \subseteq \Sigma_{\infty}(\partial B_X)$. Let $\Theta = (\theta_i)_i$ be a sequence
 of positive numbers. If $\sigma$ is analytic and large then there exists a block subspace $Y$ of $X$ such that
$\sigma_{\Theta}$ is strategically large for $Y$, where $\sigma_{\Theta}$ is the set of all infinite block bases $\{u_1,\ldots,u_n,\ldots\}$ such that for some $\{v_1,\ldots,v_n,\ldots\}\in\sigma$ we have $\|u_i-v_i\|<\theta_i$.
\end{Thm}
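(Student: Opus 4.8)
The plan is to reduce the infinite, analytic statement to the finite one, Theorem~\ref{first}, by combining a descriptive set-theoretic unfolding with a fusion argument. First I would use analyticity to write $\sigma$ as the projection $\sigma=p[F]$ of a closed set $F\subseteq\Sigma_\infty(\partial B_X)\times\mathbb{N}^{\mathbb{N}}$; passing to $F$ replaces the analytic payoff by a closed one at the cost of an auxiliary coordinate in $\mathbb{N}^{\mathbb{N}}$, which the relevant player will build alongside the block basis in an ``unfolded'' Gowers game. One checks that if $\sigma$ is large then $F$ is large in the corresponding sense, so it suffices to prove the theorem for the closed set $F$ and afterwards project the resulting strategy back to the original game; a win in the unfolded game forces the block-basis component of the outcome into $\sigma$, hence---once the $\Theta$-perturbation is accounted for---into $\sigma_\Theta$.

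Second, I would represent $F$ by its tree $T$ of finite initial segments and extract, for each $n$, the finite ``approximation'' $F_n$ consisting of the length-$n$ nodes of $T$. Each $F_n$ is a large subset of $\Sigma_{<\infty}$, so Theorem~\ref{first} applies in its unfolded form---the extra $\mathbb{N}^{\mathbb{N}}$-coordinate being an inessential discrete move controlled by the same player: for every block subspace there is a further block subspace on which $(F_n)_{\Theta'}$ is strategically large, where $\Theta'=(\theta_i')_i$ is a summable refinement of $\Theta$ with $\theta_i'\le\theta_i/2$. A fusion---building a decreasing chain $Y_1\supseteq Y_2\supseteq\cdots$, applying Theorem~\ref{first} to $F_n$ at stage $n$, and diagonalising---produces a single block subspace $Y$ for which $(F_n)_{\Theta'}$ is strategically large for every $n$ simultaneously. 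The finite winning strategies are then spliced into one infinite strategy: at each stage the player uses the strategy for the current level to pin the next block vectors to within $\theta_i'$ of a node of $T$ and freezes them. Since the per-coordinate errors are governed by the summable $\Theta'$ and each block vector has finite support, a compactness/K\"onig argument over the finitely many admissible witnesses at each level extracts a single infinite branch $v\in F$ with $\|u_i-v_i\|<\theta_i$ for all $i$, so the infinite outcome lies in $\sigma_\Theta$.

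Projecting this strategy from the unfolded game back to the game over $Y$ gives the asserted strategic largeness of $\sigma_\Theta$. I expect the main obstacle to be the bookkeeping in the middle step: one must choose $\Theta'$ and run the fusion so that (i) the finite strategies at successive levels stay mutually consistent as the subspace shrinks, (ii) enough slack survives to upgrade the coherent \emph{finite} witnesses into a single \emph{infinite} witness $v\in F$---this is precisely where closedness of $F$ and the finite-dimensionality of each block vector's support enter---and (iii) none of this control is lost when the auxiliary coordinate is projected away. Checking that largeness descends from $\sigma$ to $F$ and then to the $F_n$ is routine but must be handled carefully, since largeness is a hereditary property that interacts with both the unfolding and the perturbation.
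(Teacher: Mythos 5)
Your opening move --- unfolding the analytic set over Baire space and letting the player build the auxiliary coordinate alongside the block basis --- is exactly the right idea and matches the paper's reduction (Theorem~\ref{main} uses a continuous surjection $g:\mathbb N^{\mathbb N}\to\sigma$ rather than a closed projection, but these are interchangeable). The gap is in your splicing step. Knowing that $(F_n)_{\Theta'}$ is strategically large for every $n$ on a single subspace $Y$ does not let you concatenate the finite strategies: the strategy for level $n$ terminates at a finite position $(u_1,\ldots,u_m)$ lying $\Theta'$-close to \emph{some} level-$n$ node $t$ of $T$, but Theorem~\ref{first} applied to $F_{n+1}$ gives a strategy from the empty position, not from the position $(u_1,\ldots,u_m)$ \emph{relative to the committed node} $t$. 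Nothing guarantees that $t$ has extensions in $T$ reachable from $(u_1,\ldots,u_m)$ inside every block subspace the opponent may subsequently offer; the witnessing nodes you have ``frozen'' may all be combinatorial dead ends for the continuation game. This level-to-level transfer is the actual content of the infinite theorem (it is why analyticity is needed here and not in the finite case), and in the paper it is carried by the derived functions $f'_{n_1,\ldots,n_k}$, their stabilization, and Lemma~\ref{sequences}, whose inequality $h'_V\le f'$ is precisely the statement that a good position for $h_{n_1,\ldots,n_k}=\inf_m f'_{n_1,\ldots,n_k,m}$ can be converted into a good \emph{starting} position for some $f'_{n_1,\ldots,n_k,m}$. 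Your proposal has no analogue of this lemma, and points (i)--(ii) of your final paragraph name the difficulty without supplying the mechanism.

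The concluding K\"onig/compactness extraction also fails as stated: the set of admissible witnesses at each level is not finite. The tree $T$ is infinitely branching in the $\mathbb N^{\mathbb N}$-coordinate, and the set of $v_i\in\partial B_X$ with $\|u_i-v_i\|<\theta_i'$ is a non-compact ball in an infinite-dimensional space (finite support of $u_i$ does not confine $v_i$ to a finite-dimensional subspace). The paper avoids any a posteriori branch extraction: the strategy in Theorem~\ref{main0} explicitly commits, via the bookkeeping maps $\theta$ and $\varphi$, to an increasing sequence of finite words $(n_1,\ldots,n_k)$ that is never revised, so the run itself produces a single infinite word, and the conclusion is then obtained from the lower semi-continuity of $F$ at that word rather than from compactness of witness sets. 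To repair your argument you would need to build this commitment-and-transfer mechanism into the strategy, at which point you have essentially reconstructed the proof of Theorem~\ref{main0}.
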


Other proofs of these results can be found in the work of Bagaria and Lopez-Abad \cite{BagariaLopezAbad2001}, \cite{BagariaLopezAbad2002}.  Direct proofs of the dichotomy result without these theorems can be found in \cite{Maurey1999} and \cite{Figieletal2004}; see also \cite{Pelczar2003}.

 Our main objective is to prove Theorem \ref{Th:G2} in a form that is suitable for our intended applications.
  We take a somewhat different viewpoint (see Theorem \ref{main} below) by treating this theorem as a result about block bases in a countable dimensional space $E$ with no topology assumed.  We consider in fact only the intrinsic topology on $E$ i.e. the finest vector space topology. We then give a proof  which is rather distinct from that given by Gowers, and we feel has some advantages. A benefit of this approach is that we are able to apply the result very easily to the setting of a general F-space.

  In \S\ref{applications} we prove that the Gowers dichotomy extends to general F-spaces and discuss connections with similar (but easier) dichotomies for the existence of basic sequence.  In the final section, \S\ref{strictly} we prove the result of Gowers and Maurey \cite{GowersMaurey1993} that on a complex HI-space every operator is the sum of a scalar and a strictly singular operator in the context of quasi-Banach spaces.  This generalization is not entirely trivial and requires a few new tricks, although we broadly follow the same ideas as Gowers and Maurey.

\section{Countable dimensional vector spaces}\label{countable}

Let $E$ be a real or complex vector space of countable algebraic dimension.  There is a natural intrinsic topology $\mathcal T=\mathcal T_E$ on $E$ defined as follows:  a set $U$ is $\mathcal T-$open if $U\cap F$ is open relative to $F$ for every finite-dimensional subspace $F.$  The topology $\mathcal T$ is a vector topology on $E$ and is, indeed, the finest vector topology on $E$.  It is known that $(E,\mathcal T)$ is in fact locally convex.  More precisely if $(e_j)_{j=1}^{\infty}$ is any fixed Hamel basis then
the topology is induced by the family of norms
$$\|\sum_{j=1}^ma_je_j\|_{\lambda}=\max_{1\le j\le m}\lambda_j|a_j|$$ where $\lambda=(\lambda_j)_{j=1}^{\infty}$ is any sequence of positive numbers.  In the case when $\lambda_j=1$ for all $j$ we denote the resulting norm by $\|\cdot\|_\infty.$

We will also be concerned with the product $E^{\mathbb N}$.  On this there are two natural topologies: the product topology $\mathcal T_p$ and the box topology.  The box topology $\mathcal T_{bx}$ is a topology which makes $E^{\mathbb N}$ a topological group but not a topological vector space.  A base of neighborhoods of the origin is given by sets of the form $\prod_{n=1}^{\infty}U_n$ where each $U_n$ is a $\mathcal T-$neighborhood of zero in $E$.   A base can also be given by sets of the form
$\prod_{n=1}^{\infty}\{x:\|x\|_\lambda <\delta_n\}$ for some fixed norm $\|\cdot\|_\lambda $ and a sequence $\delta_n>0.$
We observe the obvious fact that if $V$ is an infinite-dimensional subspace of $E$ then $\mathcal T_E|V=\mathcal T_V$ and $\mathcal T_{E,bx}|V^{\mathbb N}=\mathcal T_{V,bx}.$

Now let us suppose that $E$ has a given fixed Hamel basis $(e_n)_{n=1}^{\infty}$.     Let $E_n=[e_1,\ldots,e_n]$ and $E^{(n)}=[e_{n+1},e_{n+2},\ldots].$      A sequence $(v_k)_{k=1}^n$ where $1\le n\le \infty$ is called a {\it block basis} of $(e_k)_{k=1}^{\infty}$ if each $v_k\neq 0$ and
$$ v_k=\sum_{j=p_{k-1}+1}^{p_k}a_je_j $$ for some increasing sequence $p_0=0<p_1<p_2<\cdots.$  A subspace $V$ of $E$ is called a block subspace if $V$ is the linear span of a block basis.

We let $\Sigma_{\infty}(E)$ be the subset of $E^{\mathbb N}$ consisting of all infinite block bases.  For each $n\in\mathbb N$ we let $\Sigma_n(E)$ be the subset of $E^{\mathbb N}$ of all block bases of length $n$.  We also let $\Sigma_0(E)$ be the one-point set with a single member $\emptyset.$  Let $\Sigma_{<\infty}(E)$ denote the union of all $\Sigma_n(E)$ for $0\le n<\infty.$
If $A$ is a subset of $E$ we denote by $\Sigma_n(A)$, etc., the subset of $\Sigma_n(E)$ with each element in $A$. In particular
we will be interested in the sets
$$ A_{\infty}=\{x\in E:\ 0<\|x\|_{\infty}\le 1\},\ S_{\infty}= \{x\in E:\ \|x\|_{\infty}=1\}.$$

\begin{Lem}\label{Polish} Let $\|\cdot\|$ be any norm on $E$ so that $(e_n)_{n=1}^{\infty}$ is a Schauder basis of the completion $\tilde E$ of $(E,\|\cdot\|).$ Then, on the space $\Sigma_{\infty}(E)$ the product topology $\mathcal T_p$ coincides with the product topology induced by $\|\cdot\|.$  In particular $(\Sigma_{\infty},\mathcal T_p)$ is a Polish space.
\end{Lem}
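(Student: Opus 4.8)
The plan is to prove the two assertions in turn, using throughout the coordinate functionals $(e_j^*)_{j=1}^{\infty}$ on the completion $\tilde E$, which are continuous precisely because $(e_n)$ is a Schauder basis of $\tilde E$. First I would dispose of one inclusion for free. Writing $\mathcal T_{\|\cdot\|}$ for the topology induced by $\|\cdot\|$ on $E$, the identity map $(E,\mathcal T)\to(E,\mathcal T_{\|\cdot\|})$ is continuous, since $\mathcal T$ is the finest vector topology while $\mathcal T_{\|\cdot\|}$ is merely one vector topology; hence $\mathcal T_p$ is finer than the norm product topology on all of $E^{\mathbb N}$, and so on $\Sigma_\infty$. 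The substance is therefore the reverse inclusion: every $\mathcal T_p$-open subset of $\Sigma_\infty$ is open for the norm product topology. As $(E,\|\cdot\|)$ is a metric space, the norm product topology on $E^{\mathbb N}$ and its restriction to $\Sigma_\infty$ are metrizable, so it suffices to show that the identity from $\Sigma_\infty$ with the norm product topology to $(\Sigma_\infty,\mathcal T_p)$ is sequentially continuous. Concretely: if $v^{(m)}=(v_k^{(m)})_k$ and $v=(v_k)_k$ lie in $\Sigma_\infty$ with $\|v_k^{(m)}-v_k\|\to0$ for every $k$, I must produce $v_k^{(m)}\to v_k$ in $\mathcal T$ for every $k$.

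The key step is a support-stabilization argument, and this is where the infinitude of the block bases is essential. Fix $k$ and set $r=\min\supp v_{k+1}$, so $e_r^*(v_{k+1})\ne0$. Since $e_r^*$ is continuous on $\tilde E$ and $v_{k+1}^{(m)}\to v_{k+1}$ in norm, we get $e_r^*(v_{k+1}^{(m)})\ne0$ for all large $m$, whence $\min\supp v_{k+1}^{(m)}\le r$. The block condition $\max\supp v_k^{(m)}<\min\supp v_{k+1}^{(m)}$ then forces $\supp v_k^{(m)}\subseteq\{1,\dots,r-1\}$ for all large $m$, and clearly $v_k\in E_{r-1}$ as well. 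Thus $(v_k^{(m)})_m$ eventually lies, together with its limit, in the fixed finite-dimensional space $E_{r-1}$. On $E_{r-1}$ both $\mathcal T$ and the norm topology induce the unique Hausdorff vector topology (and $\mathcal T$ is Hausdorff, being generated by the norms $\|\cdot\|_\lambda$), so $\|v_k^{(m)}-v_k\|\to0$ yields $v_k^{(m)}\to v_k$ in $\mathcal T$. As $k$ was arbitrary, $v^{(m)}\to v$ in $\mathcal T_p$, finishing the first assertion. I expect this to be the main obstacle: for a single coordinate, norm convergence does \emph{not} imply $\mathcal T$-convergence, and what rescues the argument is that the next block pins down an absolute bound on the supports.

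For the Polish assertion it then suffices, since the two topologies agree, to show that $\Sigma_\infty(E)$ is a $G_\delta$ subset of the Polish space $\tilde E^{\mathbb N}$ ($\tilde E$ is a separable Banach space, so $\tilde E^{\mathbb N}$ with the norm product topology is Polish), and to invoke Alexandrov's theorem. I would describe $\Sigma_\infty(E)$ by two conditions:
\[ \Sigma_\infty(E)=\Big(\bigcap_k\{(x_j):x_k\ne0\}\Big)\cap\Big(\bigcap_k\bigcap_{a\ge b}\big(\{e_a^*(x_k)=0\}\cup\{e_b^*(x_{k+1})=0\}\big)\Big). \]
The first factor is $G_\delta$, being a countable intersection of open sets; the second is closed, being a countable intersection of finite unions of zero sets of the continuous functionals $e_j^*$. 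The identity is then checked directly: a block basis satisfies both conditions, since $e_a^*(x_k)\ne0$ and $e_b^*(x_{k+1})\ne0$ would give $a\le\max\supp x_k<\min\supp x_{k+1}\le b$, so $a<b$; conversely, if all $x_k\ne0$ and the ordering condition holds, then choosing $b$ with $e_b^*(x_{k+1})\ne0$ forces $\supp x_k\subseteq\{1,\dots,b-1\}$, so each $x_k$ has finite support, and taking $a=\max\supp x_k$ gives $\min\supp x_{k+1}>\max\supp x_k$. Hence $\Sigma_\infty(E)$ is $G_\delta$ in $\tilde E^{\mathbb N}$ and therefore Polish. The subtlety worth flagging here is the encoding: the naive ``there exists a gap between consecutive supports'' description is only $F_\sigma$, whereas phrasing the block condition as the for-all statement over pairs $a\ge b$ above makes it closed, with finiteness of supports extracted for free from $x_{k+1}\ne0$.
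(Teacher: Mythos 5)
Your proof is correct and follows essentially the same route as the paper: the same support-stabilization argument (a nonzero coordinate of the next block pins the supports of $v_k^{(m)}$ into a fixed finite-dimensional space, where all Hausdorff vector topologies agree), and the same observation that $\Sigma_\infty(E)$ is closed in $(\tilde E\setminus\{0\})^{\mathbb N}$, which you merely spell out via the coordinate functionals. No gaps.
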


\begin{proof} Let $\xi_n=(\xi_{nk})_{k=1}^{\infty}$ be a sequence in $\Sigma_{\infty}(E)$ so that for some $\xi=(\xi_k)_{k=1}^{\infty}\in \Sigma_{\infty}(E),$ $\lim_{n\to\infty}\|\xi_{nk}-\xi_k\|=0$ for each $k$.
Let us suppose $\xi_{nk}\in [e_{p_{n,k-1}+1},\ldots,e_{p_{nk}}]$ where $p_{n0}<p_{n1}<\cdots$ and that $\xi_k\in [e_{p_{k-1}+1},\ldots,e_{p_k}].$  Then it is clear that
$$\limsup_{n\to\infty}p_{n,k-1} \le p_{k} \qquad k=1,2,\ldots$$
using the fact that $(e_n)_{n=1}^{\infty}$ is a Schauder basis.  It follows that each sequence $(\xi_{nk})_{n=1}^{\infty}$ is contained in some fixed finite-dimensional space and so the convergence is also in $\mathcal T_p.$

For the product-norm topology it is also easy to see that $\Sigma_{\infty}(E)$ is a closed subset of $(\tilde E\setminus \{0\})^{\mathbb N}$ and hence is Polish.\end{proof}

 Let $\mathcal B=\mathcal B(E)$ be the collection of all infinite-dimensional block subspaces of $(e_k)_{k=1}^{\infty}$. If $V\in\mathcal B$ then $V$ is the span of a block basis $(v_n)_{n=1}^{\infty}$ and we write $\mathcal B(V)$ for the collection of infinite-dimensional block subspaces of $V$ with respect to $(v_n)_{n=1}^{\infty}$ (this is clearly independent of the choice of the block basis). We will use the notation $(v_1,\ldots,v_r)\prec(u_1,\ldots,u_s)$ to mean that $(v_1,\ldots,v_r)$ is a block basis of $(u_1,\ldots,u_s).$

 Let $\sigma$ be a subset of $\Sigma_{\infty}(E).$  We shall say that $\sigma$ is {\it large} if for every $V\in\mathcal B(E)$ we have $\sigma \cap\Sigma_{\infty}(V)\neq \emptyset.$

 A  {\it strategy} is a map $\Phi:\Sigma_{<\infty}(E)\times \mathcal B(E)\to \Sigma_{< \infty}(E)$ if for all $(u_1,\ldots,u_n)\in \Sigma_n(E)$ we have $\Phi(u_1,u_2,\ldots,u_n;V)=(u_1,\ldots,u_n,u_{n+1})$ with $u_{n+1}\in V.$

 If $(V_j)_{j=1}^{\infty}$ is a sequence of block subspaces then we will write
 $$ \Phi(u_1,\ldots,u_n;V_1,\ldots,V_m)=(u_{1},\ldots,u_{m+n})$$ and
 $$ \Phi(u_1,\ldots,u_n;V_1,\ldots,V_m,\ldots )=(u_{1},\ldots,u_{m+n},\ldots)$$ where $u_{n+k}=\Phi(u_1,\ldots,u_{n+k-1};V_k)$ for $k\ge 1$.  In the case when $n=0$ we write
 $\Phi(V_1,\ldots,V_m)$ or $\Phi(V_1,\ldots,V_m,\ldots)$ for $\Phi(\emptyset;V_1,\ldots,V_m)$ or $\Phi(\emptyset;V_1,\ldots,V_m,\ldots)$

 A subset $\sigma$ of $\Sigma_{\infty}(E)$ is called strategically large for $V\in \mathcal B(E)$ and $(u_1,\ldots,u_n)\in \Sigma_{<\infty}(E)$ if there is a strategy $\Phi$ with the property that for every sequence  $(V_j)_{j=1}^{\infty}$ with $V_j\subset V$ we have $$\Phi(u_1,\ldots,u_n;V_1,\ldots,V_m,\ldots)\in \sigma.$$  $\sigma$ is strategically large for $V\in\mathcal B(E)$ if it is strategically large for $V\in\mathcal B(E)$ and $\emptyset.$

\section{Functions on subsets of $\Sigma_{<\infty}(E)$}

If $V,W$ are subspaces of $E$ let us write $V\subset_aW$ to mean that there exists a finite dimensional subspace $F$ so that $V\subset W+F.$

\begin{Lem}[Stabilization Lemma] \label{stabilization} Let $E$ be a countable dimensional space with fixed Hamel basis $(e_k)_{k=1}^{\infty}$.  Let $X$ be a separable topological space and suppose that, for each $V\in \mathcal B(E)$,  $f_V:X\to\mathbb R$ is a continuous function.  Suppose further that
$$ f_{V_1}(x)\ge f_{V_2}(x) \qquad x\in X$$ whenever $V_1\subset_a V_2$ .
Then there is a block subspace $W$ of $E$ so that
$f_V=f_W$ whenever $V\subset W.$

More generally suppose $(X_n)_{n=1}^{\infty}$ is a sequence of separable topological spaces and for each $V\in \mathcal B$ and $n\in\mathbb N,$  $f_V^{(n)}:X_n\to\mathbb R$ is a continuous function. Suppose further that
$$ f^{(n)}_{V_1}(x)\ge f^{(n)}_{V_2}(x) \qquad x\in X_n$$ whenever $V_1\subset_a V_2$.
Then there is a block subspace $W$ of $E$ so that
$f^{(n)}_V=f^{(n)}_W$ whenever $V\subset_a W$
and $n\in\mathbb N.$\end{Lem}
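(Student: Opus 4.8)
The plan is to reduce to bounded functions, attach to each block subspace an \emph{oscillation} number, observe that this oscillation is monotone with respect to $\subset_a$, and then drive it to zero on a diagonal subspace. First I would compose every $f_V$ with a fixed increasing homeomorphism of $\mathbb R$ onto $(0,1)$; this preserves continuity and the inequality hypothesis, and since the homeomorphism is injective, $f_V=f_W$ holds for the original family iff it holds for the composed one. So I may assume $0<f_V<1$. Fix a countable dense set $\{x_m\}_{m\ge 1}$ in $X$. For a block subspace $U$ set
\[ S(U,x)=\sup\{f_V(x):V\in\mathcal B(E),\ V\subset_a U\},\qquad \omega_m(U)=S(U,x_m)-f_U(x_m)\ge 0. \]
Since $V\subset_a U\subset_a U'$ forces $V\subset_a U'$ while $U\subset_a U'$ forces $f_U\ge f_{U'}$, one obtains the key monotonicity: if $U\subset_a U'$ then $S(U,x)\le S(U',x)$ and $f_U\ge f_{U'}$, hence $\omega_m(U)\le\omega_m(U')$. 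Thus the oscillation can only decrease as one passes to an (almost-)subspace.

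Two facts make the supremum over $\subset_a$ tractable. (i) Every infinite-dimensional subspace $Z\subseteq E$ contains an infinite-dimensional block subspace: as $E/E^{(N)}\cong E_N$ is finite-dimensional, $Z\cap E^{(N)}$ has finite codimension in $Z$, so one may choose nonzero vectors of $Z$ with strictly increasing supports. (ii) Consequently the $\subset_a$-supremum equals the genuine one, $S(U,x)=\sup\{f_V(x):V\subset U,\ V\in\mathcal B(E)\}$: given $V\subset_a U$, the space $V\cap U$ is infinite-dimensional, by (i) it contains a block subspace $V'\subset U$, and $V'\subset V$ gives $f_{V'}\ge f_V$. From (ii) I extract a stabilization sublemma: for every block subspace $U_0$, every $m$ and every $\delta>0$ there is a block subspace $U_1\subset U_0$ with $\omega_m(U_1)<\delta$. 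Indeed, by (ii) one may pick $U_1\subset U_0$ with $f_{U_1}(x_m)>S(U_0,x_m)-\delta$, and then monotonicity $S(U_1,x_m)\le S(U_0,x_m)$ gives $\omega_m(U_1)=S(U_1,x_m)-f_{U_1}(x_m)<\delta$.

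Next I would build a decreasing chain $E=W_0\supset W_1\supset W_2\supset\cdots$ of block subspaces with $\omega_m(W_k)<1/k$ for all $m\le k$: at stage $k$ apply the sublemma successively for $m=1,\dots,k$, shrinking each time, the previously gained inequalities persisting because $\omega_m$ is monotone under $\subset$. Finally take a diagonal block subspace $W=[w_j]$ with $w_j\in W_j$ and strictly increasing supports; then the tail $[w_j:j\ge k]\subset W_k$, so $W\subset_a W_k$ for every $k$. Monotonicity of the oscillation now gives $\omega_m(W)\le\omega_m(W_k)<1/k$ for all $k\ge m$, whence $\omega_m(W)=0$ for every $m$. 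This means $S(W,x_m)=f_W(x_m)$, so every $V\subset_a W$ satisfies $f_W(x_m)\le f_V(x_m)\le S(W,x_m)=f_W(x_m)$, i.e. $f_V(x_m)=f_W(x_m)$ for all $m$; by continuity of $f_V,f_W$ and density of $\{x_m\}$ this gives $f_V=f_W$ on $X$ for every $V\subset_a W$, which contains the first assertion. For the general statement one runs the identical construction while diagonalizing over the countable family of pairs $(n,m)$, with $\{x_{n,m}\}_m$ dense in $X_n$ and $\omega_{n,m}$ the oscillation of $f^{(n)}$ at $x_{n,m}$; the resulting $W$ has $\omega_{n,m}(W)=0$ for all $n,m$, yielding $f^{(n)}_V=f^{(n)}_W$ whenever $V\subset_a W$.

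The main obstacle, around which the whole argument is organized, is securing \emph{exact} (not merely $\varepsilon$-approximate) stabilization simultaneously at all points and for all $n$: a naive diagonalization only controls $\sup_{V\subset W}f_V$ and leaves open the possibility that $f_W$ itself lies strictly below it, so that nearby subspaces still beat $f_W$. The device that removes this is precisely the monotonicity of $\omega_m$ under $\subset_a$, which forces the oscillation to be inherited, and hence to vanish, on the diagonal subspace. The secondary technical point requiring care is reconciling genuine containment $\subset$ with $\subset_a$, handled by the block-extraction facts (i)--(ii).
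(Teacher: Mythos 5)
Your proof is correct, but it takes a genuinely different route from the paper's. The paper runs a transfinite induction of length $\omega_1$: starting from $V_1=E$, at each successor step it passes to a block subspace on which the function strictly changes if this is possible, and diagonalizes at limit ordinals; since the resulting family $\{f_{V_\alpha}\}$ is monotone in $\alpha$ and a monotone $\omega_1$-indexed family of real values at each point of a countable dense set must be eventually constant, stabilization occurs at some countable ordinal $\beta$, and $W=V_\beta$ works. You instead quantify the failure of stabilization by the oscillation $\omega_m(U)=S(U,x_m)-f_U(x_m)$, observe it is monotone under $\subset_a$ and can be made $<\delta$ by passing to a further block subspace (after composing with a homeomorphism of $\mathbb R$ onto $(0,1)$ to make $S$ finite --- a reduction the paper does not need, since it never takes suprema), and then kill all the $\omega_m$ simultaneously by a single $\omega$-length nested-plus-diagonal construction. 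Your supporting facts (i)--(ii), reconciling $\subset_a$ with genuine block containment, are correctly argued and are needed precisely because the supremum is taken over $\subset_a$. What your approach buys is elementariness and effectivity: only a countable recursion with a diagonal subspace, no transfinite recursion, and it directly delivers the stronger conclusion $f_V=f_W$ for all $V\subset_a W$ already in the first part (the paper states this only in the second part). What the paper's approach buys is brevity of the core argument and no need for the boundedness reduction. Both proofs use the countable dense set and continuity in the same way at the final step.
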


\begin{proof}  We prove the first part. We define block subspaces $V_{\alpha}$ for every countable ordinal $\alpha$ by transfinite induction, so that $\alpha\le \beta \ \implies V_{\beta}\subset_a V_{\alpha}.$  Set $V_1=E.$
For each $\alpha$ which is not a limit ordinal, say $\alpha=\beta+1$ define $V_{\alpha}\subset V_{\beta}$ so that $f_{V_{\alpha}}\neq f_{V_{\beta}}$ if possible; otherwise let $V_{\alpha}=V_{\beta}.$  If $\alpha$ is a limit ordinal then $\alpha=\sup_{n}\beta_n$ for some increasing sequence $(\beta_n)_{n=1}^{\infty}$ with $\beta_n<\alpha.$ Thus $V_{\beta_m}\subset_a V_{\beta_n}$ if $m>n.$  In this case we may by a diagonal argument find $V_{\alpha}$ so that $V_{\alpha}\subset_a V_{\beta_n}$ for every $n$ (simply choose a block basis $v_n$ with $v_n\in V_{\beta_1}\cap\cdots\cap V_{\beta_n}$). Now it follows that the functions $f_{V_{\alpha}}$ are increasing in $\alpha$ for $1\le \alpha<\omega_1.$  If $D$ is a countable dense set in $X$ there must therefore exist a countable ordinal $\beta$ so that
$$ f_{V_{\beta}}(x)=f_{V_{\alpha}}(x) \qquad x\in D,\ \beta\le \alpha.$$  Thus $f_{V_{\beta+1}}=f_{V_{\beta}}$ so that $W=V_{\beta}$ satisfies the conclusion.

The second part reduces to the first if we consider $X=\cup_{n=1}^{\infty}X_n$ topologized as a disjoint union and $f_V:X\to\mathbb R$ given by $f_V(x)=f^{(n)}_V(x)$ when $x\in X_n.$
\end{proof}

Consider a function $f:\Sigma_{<\infty}(A)\to [0,\infty)$ where
$A=S_{\infty}$ or $A=A_{\infty}.$ We shall say that $f$ is {\it
uniformly $\mathcal T_{bx}-$continuous} if given $\epsilon>0$
there is a sequence $(U_n)_{n=1}^{\infty}$ of $\mathcal
T-$neighborhoods of $0$ such that if
$(u_1,\ldots,u_r),(v_1,\ldots,v_r)\in \Sigma_{<\infty}(A)$ and
$u_j-v_j\in U_j$ for $1\le j\le r$ then
$$|f(u_1,\ldots,u_r)-f(v_1,\ldots,v_r)|<\epsilon.$$ In effect if
we induce maps $f^{[n]}$ on $\Sigma_{\infty}(A)$ by
$$ f^{[n]}(u_1,\ldots,u_k,\ldots)= f(u_1,\ldots,u_n)$$ this requires that the family of functions $(f^{[n]})_{n=1}^{\infty}$ is equi-uniformly continuous for the box topology $\mathcal T_{bx}.$

We will need a slightly weaker notion for maps $f:\Sigma_{<\infty}(A_{\infty})\to [0,\infty).$  We will say that $f$ is {\it admissible} if it is bounded and \newline (i) Given $\epsilon>0$, there is a sequence $(U_n)_{n=1}^{\infty}$ of $\mathcal T-$neighborhoods of $0$ such that if $(u_1,\ldots,u_r),(v_1,\ldots,v_r)\in \Sigma_{<\infty}(S_{\infty})$ and $u_j-v_j\in U_j$ for $1\le j\le r$ then $$|f(\lambda_1u_1,\ldots,\lambda_ru_r)-f(\lambda_1v_1,\ldots,\lambda_rv_r)|<\epsilon \qquad (\lambda_1,\ldots,\lambda_r)\in (0,1]^r,$$ and \newline
(ii) Given $\epsilon>0$ and $(u_1,\ldots,u_r)\in \Sigma_{<\infty}(S_{\infty})$ there exists $\delta=\delta(u_1,\ldots,u_r,\epsilon)>0$ so that if $0<\lambda_j,\mu_j\le 1$ for $1\le j\le r$ and $\max_{1\le j\le r}|\lambda_j-\mu_j|\le \delta$ then
$$ |f(\lambda_1u_1,\ldots,\lambda_ru_r,v_1,\ldots,v_s)-f(\mu_1u_1,\ldots,\mu_ru_r,v_1,\ldots,v_s)|<\epsilon,$$ whenever $ (u_1,\ldots,u_r,v_1,\ldots,v_s)\in \Sigma_{<\infty}(A_{\infty}).$

The following Lemma is easy and its proof is omitted:

\begin{Lem}\label{easy} (i) Suppose $f:\Sigma_{<\infty}(A_{\infty})\to [0,\infty)$ is bounded and uniformly $\mathcal T_{bx}$-continuous; then $f$ is admissible.
\newline
(ii) Suppose $f:\Sigma_{<\infty}(S_{\infty})\to [0,\infty)$  is uniformly $\mathcal T_{bx}$-continuous; then $g:\Sigma_{<\infty}(A_{\infty})\to [0,\infty)$ is admissible where
$g(u_1,\ldots,u_n)=f(u_1/\|u_1\|_{\infty},\ldots,u_n/\|u_n\|_{\infty}).$\end{Lem}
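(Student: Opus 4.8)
The plan is to verify in each part that the function in question is bounded and satisfies clauses (i) and (ii) of admissibility, reducing every clause directly to the uniform $\mathcal{T}_{bx}$-continuity hypothesis together with two elementary features of the intrinsic topology $\mathcal{T}$: it has a base of \emph{balanced} neighbourhoods of $0$ (each norm-ball $\{x:\|x\|_\lambda<\delta\}$ is balanced, since $\|\cdot\|_\lambda$ is a norm), and every $\mathcal{T}$-neighbourhood of $0$ absorbs each fixed vector. I will repeatedly use that for $u\in S_\infty$ and $0<\lambda\le 1$ one has $\|\lambda u\|_\infty=\lambda\in(0,1]$ while $\lambda u$ keeps the support of $u$; hence multiplication by a positive scalar $\le 1$ carries $\Sigma_{<\infty}(S_\infty)$ into $\Sigma_{<\infty}(A_\infty)$ and preserves the block structure.

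For part (i), $f$ is bounded by hypothesis, so only clauses (i) and (ii) remain. Given $\epsilon>0$, I would take the sequence $(U_n)$ supplied by uniform $\mathcal{T}_{bx}$-continuity and shrink each $U_n$ to a balanced $\mathcal{T}$-neighbourhood $U_n'\subseteq U_n$; the continuity estimate is inherited by the smaller neighbourhoods. For clause (i): if $u_j-v_j\in U_j'$ and $0<\lambda_j\le 1$, then $\lambda_j u_j-\lambda_j v_j=\lambda_j(u_j-v_j)\in U_j'\subseteq U_j$ by balancedness, and both scaled $r$-tuples lie in $\Sigma_{<\infty}(A_\infty)$, so uniform $\mathcal{T}_{bx}$-continuity delivers the required inequality uniformly in $\lambda$. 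For clause (ii): with $(u_1,\dots,u_r)$ fixed, absorption gives $\delta_j>0$ so that $(\lambda_j-\mu_j)u_j\in U_j$ whenever $|\lambda_j-\mu_j|\le\delta_j$; put $\delta=\min_{1\le j\le r}\delta_j$. Since the trailing entries $v_1,\dots,v_s$ are common to both tuples (their differences are $0$, which lies in $U_{r+k}$), a final appeal to uniform continuity finishes the clause.

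For part (ii) the decisive point is that normalization absorbs the positive scalars: as $u_j\in S_\infty$, one has $\lambda_j u_j/\|\lambda_j u_j\|_\infty=u_j$ for all $\lambda_j\in(0,1]$, so $g(\lambda_1u_1,\dots,\lambda_ru_r)=f(u_1,\dots,u_r)$ does not depend on $\lambda$. Consequently clause (i) of admissibility for $g$ is \emph{verbatim} the uniform $\mathcal{T}_{bx}$-continuity of $f$, and clause (ii) is trivial because the two sides literally coincide (the scaled $u_j$ normalize to $u_j$ on each side and the $v$-entries are identical), so any $\delta$ works. Boundedness of $g$ is inherited from $f$, since the normalization map of $\Sigma_{<\infty}(A_\infty)$ onto $\Sigma_{<\infty}(S_\infty)$ is surjective and preserves the block structure, whence $\sup g=\sup f$.

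I expect no genuine obstacle; as the authors note, the lemma is routine. The only places asking for a moment's care are the reduction to a balanced, absorbing neighbourhood base in part (i), and, in part (ii), the recognition that boundedness of $f$ must be carried along for $g$ to be admissible — uniform $\mathcal{T}_{bx}$-continuity by itself does not force boundedness on the non-compact sphere $S_\infty$ — after which both admissibility clauses for $g$ collapse to triviality once the scalars are seen to be annihilated by normalization.
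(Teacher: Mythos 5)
Your proof is correct; the paper omits the argument entirely (the lemma is stated with ``its proof is omitted''), and the verification you give --- shrinking to balanced neighbourhoods so that $\lambda_j(u_j-v_j)\in U_j$ for clause (i), absorption of each fixed $u_j$ for clause (ii), and the observation that normalization annihilates the scalars in part (ii) --- is exactly the routine check the authors had in mind. Your side remark is also well taken: as literally stated, part (ii) needs $f$ to be bounded (e.g.\ $f(u_1,\ldots,u_r)=r$ is uniformly $\mathcal T_{bx}$-continuous but unbounded), an implicit hypothesis that is harmless in the paper's applications.
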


\begin{Lem}\label{continuous} If $f:\Sigma_{<\infty}(A_{\infty})\to [0,\infty)$ is admissible then for each $m\in\mathbb N$ the map $F_m:(0,1]^m\times \Sigma_m(S_{\infty})\to [0,\infty)$ defined by
$$ F(\lambda_1,\ldots,\lambda_m,u_1,\ldots,u_m)=f(\lambda_1u_1,\ldots,\lambda_mu_m)$$ is continuous when $\Sigma_m(S_{\infty})\subset (E,\mathcal T)^m$ is given the subset topology.
\end{Lem}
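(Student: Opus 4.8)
The plan is to check continuity of $F_m$ at an arbitrary point $(\lambda_1^0,\dots,\lambda_m^0,u_1^0,\dots,u_m^0)$ of the domain $(0,1]^m\times\Sigma_m(S_\infty)$, decoupling the dependence on the scalars from the dependence on the directions and matching each piece to one clause of admissibility. First I would record that if $u_j\in S_\infty$ and $\lambda_j\in(0,1]$ then $\|\lambda_ju_j\|_\infty=\lambda_j\in(0,1]$ while the supports are unchanged, so $(\lambda_1u_1,\dots,\lambda_mu_m)\in\Sigma_m(A_\infty)$ and $F_m$ is indeed $f$ evaluated on a genuine block basis; this makes all the expressions below meaningful.

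Fix $\epsilon>0$. I would insert the intermediate point $(\lambda_1u_1^0,\dots,\lambda_mu_m^0)$ and use the triangle inequality to bound $|F_m(\lambda,u)-F_m(\lambda^0,u^0)|$ by $A+B$, where $A=|f(\lambda_1u_1,\dots,\lambda_mu_m)-f(\lambda_1u_1^0,\dots,\lambda_mu_m^0)|$ and $B=|f(\lambda_1u_1^0,\dots,\lambda_mu_m^0)-f(\lambda_1^0u_1^0,\dots,\lambda_m^0u_m^0)|$. For the term $A$ I would apply clause (i) of admissibility with $\epsilon/2$ to obtain $\mathcal T$-neighborhoods $(U_n)_{n=1}^\infty$ of $0$; since both $(u_1,\dots,u_m)$ and $(u_1^0,\dots,u_m^0)$ lie in $\Sigma_m(S_\infty)$, the condition $u_j-u_j^0\in U_j$ for $1\le j\le m$ forces $A<\epsilon/2$ \emph{for every} choice of scalars $\lambda_j\in(0,1]$. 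For the term $B$ I would apply clause (ii) with $\epsilon/2$ and the fixed block basis $(u_1^0,\dots,u_m^0)$ (taking $r=m$ and $s=0$) to obtain $\delta>0$ such that $\max_{1\le j\le m}|\lambda_j-\lambda_j^0|\le\delta$ forces $B<\epsilon/2$.

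Combining, since $\mathcal T$ is a vector topology the sets $u_j^0+U_j$ are $\mathcal T$-neighborhoods of $u_j^0$, so the collection of $(\lambda_1,\dots,\lambda_m,u_1,\dots,u_m)$ with $|\lambda_j-\lambda_j^0|<\delta$ and $u_j-u_j^0\in U_j$ for $1\le j\le m$ is a neighborhood of the target point in the product topology of $(0,1]^m\times\Sigma_m(S_\infty)$, on which $A+B<\epsilon$; this gives continuity. The only real point to watch---rather than a genuine obstacle---is that in the term $A$ the directions are perturbed while the scalars are the \emph{perturbed} values $\lambda_j$ (not the base values $\lambda_j^0$); this is exactly why clause (i) is stated uniformly over $(\lambda_1,\dots,\lambda_r)\in(0,1]^r$, and that uniformity is what makes the two-step decomposition go through.
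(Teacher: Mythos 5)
Your proof is correct and follows essentially the same route as the paper: both insert the intermediate point with perturbed scalars and base directions, control the direction-perturbation term by clause (i) of admissibility (whose uniformity over the scalars is exactly what is needed), and control the scalar-perturbation term by clause (ii) at the fixed block basis. The remark about why the uniformity in clause (i) is essential is the same observation implicit in the paper's argument.
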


\begin{proof}  Suppose $\epsilon>0.$  We pick  $\mathcal T-$neighborhoods of zero in $E$, $U_1,\ldots,U_m$ so that
$u_j-v_j\in U_j$ for $1\le j\le m$ implies that
$$ |f(\lambda_1v_1,\ldots,\lambda_mv_m)-f(\lambda_1u_1,\ldots,\lambda_mu_m)|<\epsilon/2$$ for every $(\lambda_1,\ldots,\lambda_m)\in (0,1]^m.$ If $(v_1,\ldots,v_m)\in \Sigma_{m}(E_n\cap S_{\infty})$  we then pick $\delta=\delta(v_1,\ldots,v_r)>0$ so that if $|\lambda_j-\mu_j|<\delta$ for $1\le j\le m$ we have
$$ |f(\lambda_1v_1,\ldots, \lambda_mv_m)-f(\mu_1v_1,\ldots,\mu_mv_m)|<\epsilon/2.$$  Combining gives
$$ |f(\lambda_1u_1,\ldots,\lambda_mu_m)-f(\mu_1v_1,\ldots,\mu_mv_m)|<\epsilon$$ whenever $\max_{1\le j\le m}|\lambda_j-\mu_j|<\delta$ and $u_j-v_j\in U_j$ for $1\le j\le m.$ Thus $F$ is continuous at each point $(\mu_1,\ldots,\mu_r,v_1,\ldots,v_m)$.
\end{proof}

Suppose $f:\Sigma_{<\infty}(A_{\infty})\to [0,\infty)$ is any admissible function.
Let us adopt the convention that the function $f$ takes the value $+\infty$ at any point of $E^{\mathbb N}\setminus \Sigma_{<\infty}(A_{\infty}).$
For any $V \in {\mathcal B}(E)$ let \begin{align*} &f'_V(u_1,\ldots,u_n)=\\ &\lim_{m\to\infty} \inf\Big\{ f(u_1,\ldots,u_n,v_1,\ldots,v_s):\ v_1,\ldots,v_s\in V\cap E^{(m)},\ s\ge 1\Big\}.\end{align*}

Note that $V\subset_a W$ implies that $f'_V\ge f'_W.$

The following is more or less immediate:

\begin{Lem}\label{stab2}  If $f:\Sigma_{<\infty}(A_{\infty})\to [0,\infty)$ is admissible,   then the each of the functions $f'_V:\Sigma_{<\infty}(A_\infty)\to [0,\infty)$  is admissible.
\end{Lem}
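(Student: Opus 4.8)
The plan is to check directly that $f'_V$ inherits boundedness together with conditions (i) and (ii) of admissibility from $f$. The mechanism is simple: $f'_V$ is built from $f$ by appending block vectors drawn from the far tail $V\cap E^{(m)}$ and then taking an infimum followed by the limit $m\to\infty$, and both of these operations preserve the uniform estimates guaranteed by admissibility of $f$. Two preliminary observations make the transfer clean. First, for $(u_1,\ldots,u_n)\in\Sigma_{<\infty}(A_\infty)$ and $m$ beyond the support of $u_n$, the space $V\cap E^{(m)}$ is still infinite-dimensional, so there exist block vectors $v\in V\cap E^{(m)}$ with $\|v\|_\infty\le 1$ for which $(u_1,\ldots,u_n,v)\in\Sigma_{<\infty}(A_\infty)$; hence the infimum defining $f'_V$ is over a nonempty set of finite values and is bounded by $\sup f$, so $f'_V$ maps into $[0,\infty)$. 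Second, since $V\cap E^{(m)}$ shrinks as $m$ grows, the infimum is nondecreasing in $m$, so $\lim_{m\to\infty}$ is actually a supremum: the limit exists, is finite, and any inequality valid for all large $m$ passes to the limit. This last point is exactly what lets me promote per-$m$ estimates to estimates on $f'_V$.

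For condition (i) I would fix $\epsilon>0$ and take the sequence $(U_n)_{n=1}^\infty$ furnished by condition (i) for $f$ applied with $\epsilon/2$. Let $(u_1,\ldots,u_r),(u'_1,\ldots,u'_r)\in\Sigma_{<\infty}(S_\infty)$ satisfy $u_j-u'_j\in U_j$, fix $(\lambda_1,\ldots,\lambda_r)\in(0,1]^r$, and let $N$ bound the supports of all the $u_j,u'_j$. For $m\ge N$ every $v\in V\cap E^{(m)}$ is supported beyond $N$, so $(u_1,\ldots,u_r,v_1,\ldots,v_s)$ is a block basis precisely when $(v_1,\ldots,v_s)$ is, and the same holds with the $u'_j$; thus for $m\ge N$ the two infima run over the \emph{same} family of admissible tails. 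For a fixed such tail, writing $v_i=\mu_iw_i$ with $w_i=v_i/\|v_i\|_\infty\in S_\infty$ and $\mu_i=\|v_i\|_\infty\in(0,1]$, the extended normalized block bases $(u_1,\ldots,u_r,w_1,\ldots,w_s)$ and $(u'_1,\ldots,u'_r,w_1,\ldots,w_s)$ differ coordinatewise by members of the $U_j$ (the appended coordinates differ by $0$), so condition (i) for $f$ gives
$$|f(\lambda_1u_1,\ldots,\lambda_ru_r,v_1,\ldots,v_s)-f(\lambda_1u'_1,\ldots,\lambda_ru'_r,v_1,\ldots,v_s)|<\epsilon/2.$$
Taking the infimum over tails and then letting $m\to\infty$ yields $|f'_V(\lambda_1u_1,\ldots,\lambda_ru_r)-f'_V(\lambda_1u'_1,\ldots,\lambda_ru'_r)|\le\epsilon/2<\epsilon$.

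For condition (ii), given $\epsilon>0$ and $(u_1,\ldots,u_r)\in\Sigma_{<\infty}(S_\infty)$ I would take $\delta=\delta(u_1,\ldots,u_r,\epsilon/2)$ from condition (ii) for $f$. Here the situation is even simpler, since replacing the scalars $\lambda_j$ by $\mu_j$ does not alter the supports of the $\lambda_ju_j$; consequently, for every $m$ the family of admissible appended tails $(w_1,\ldots,w_t)$ with $w_i\in V\cap E^{(m)}$ is literally identical on the two sides. For each fixed such $w$, condition (ii) for $f$ applied with the tail $(v_1,\ldots,v_s,w_1,\ldots,w_t)$ — and here it is essential that the $\delta$ provided by (ii) is independent of the tail — bounds the difference between $f(\lambda_1u_1,\ldots,\lambda_ru_r,v_1,\ldots,v_s,w_1,\ldots,w_t)$ and $f(\mu_1u_1,\ldots,\mu_ru_r,v_1,\ldots,v_s,w_1,\ldots,w_t)$ by $\epsilon/2$. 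Passing to the infimum over $w$ and then to the limit in $m$ produces the required estimate for $f'_V$ with the same $\delta$, uniformly over the tails $(v_1,\ldots,v_s)$.

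The computations themselves are routine, so the content is entirely in two bookkeeping points. The first, which is the only real obstacle, is ensuring that the collections of appended block vectors agree on the two sides of each comparison; this is handled in (i) by choosing $m$ beyond the finite supports of the $u_j,u'_j$, and is automatic in (ii) because positive scalars do not affect supports. The second is the monotonicity of $\lim_m$ noted above, which guarantees that the per-$m$ estimates survive in the limit. Working throughout with $\epsilon/2$ converts the resulting inequalities $\le\epsilon$ into the strict inequalities demanded by the definition of admissibility.
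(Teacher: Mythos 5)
Your proof is correct: it supplies exactly the routine verification that the paper omits (the authors state Lemma \ref{stab2} with the remark that it is ``more or less immediate'' and give no proof). The two bookkeeping points you isolate --- choosing $m$ past the supports of both $r$-tuples so the families of admissible tails coincide, and using the monotonicity in $m$ to pass per-$m$ estimates to the limit --- together with the observation that the $\delta$ in condition (ii) is uniform over appended tails, are precisely what makes the transfer of (i) and (ii) from $f$ to $f'_V$ work.
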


\begin{Lem}\label{stab3} If $\mathcal F$ is a countable family of admissible functions,  then there exists $V\in\mathcal B(E)$ so that for every $W\in\mathcal B(V)$ and every $f\in\mathcal F$ we have $ f'_W= f'_V.$
\end{Lem}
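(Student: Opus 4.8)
The plan is to apply the Stabilization Lemma (Lemma~\ref{stabilization}), in its second, sequential-family form, to the countable collection of functions arising from $\mathcal F$. The key observation is that each admissible $f$ produces, via the construction preceding Lemma~\ref{stab2}, a family of derived functions $f'_V$, and by Lemma~\ref{stab2} each $f'_V$ is again admissible; moreover the monotonicity remark $V\subset_a W \implies f'_V\ge f'_W$ gives exactly the decreasing hypothesis required by the Stabilization Lemma. So the whole burden is to package the $f'_V$ as continuous functions on fixed separable spaces and invoke the existing machinery.

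First I would fix an enumeration $\mathcal F=\{f_1,f_2,\ldots\}$. For each $f_k$ and each $m\in\mathbb N$, I would use Lemma~\ref{continuous} to view the derived function as a continuous map on the space $X_{k,m}=(0,1]^m\times \Sigma_m(S_{\infty})$, where $\Sigma_m(S_{\infty})$ carries the subspace topology inherited from $(E,\mathcal T)^m$. Concretely, for $V\in\mathcal B(E)$ define $g^{(k,m)}_V:X_{k,m}\to[0,\infty)$ by $g^{(k,m)}_V(\lambda_1,\ldots,\lambda_m,u_1,\ldots,u_m)=(f_k)'_V(\lambda_1 u_1,\ldots,\lambda_m u_m)$; since $(f_k)'_V$ is admissible (Lemma~\ref{stab2}), Lemma~\ref{continuous} guarantees each $g^{(k,m)}_V$ is continuous. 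Each $X_{k,m}$ is separable: $\Sigma_m(S_{\infty})$ is separable because the rational block vectors are dense in $\mathcal T$, and $(0,1]^m$ is separable. Re-indexing the countable collection $\{X_{k,m}\}_{k,m}$ as a single sequence $(X_n)_{n=1}^\infty$ with associated continuous functions $f^{(n)}_V$ yields precisely the setup of the second part of Lemma~\ref{stabilization}.

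The monotonicity hypothesis transfers cleanly: if $V_1\subset_a V_2$ then $(f_k)'_{V_1}\ge (f_k)'_{V_2}$ pointwise, so $g^{(k,m)}_{V_1}\ge g^{(k,m)}_{V_2}$ on all of $X_{k,m}$, which is exactly the condition $f^{(n)}_{V_1}(x)\ge f^{(n)}_{V_2}(x)$ needed. Applying Lemma~\ref{stabilization} then produces a single block subspace $V\in\mathcal B(E)$ such that $f^{(n)}_W=f^{(n)}_V$ whenever $W\subset_a V$, for all $n$. Unwinding the indices, this says $(f_k)'_W$ and $(f_k)'_V$ agree on every point $(\lambda_1 u_1,\ldots,\lambda_m u_m)$ with $(u_1,\ldots,u_m)\in\Sigma_m(S_{\infty})$ and $\lambda_j\in(0,1]$; but every element of $\Sigma_m(A_\infty)$ has exactly this form (writing $u_j=\lambda_j(u_j/\|u_j\|_\infty)$ with $\lambda_j=\|u_j\|_\infty\in(0,1]$), so $(f_k)'_W=(f_k)'_V$ as functions on $\Sigma_{<\infty}(A_\infty)$ for every $W\subset_a V$ and every $f_k\in\mathcal F$, which is the desired conclusion.

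The only genuine subtlety I would watch is the separability of $\Sigma_m(S_{\infty})$ in the intrinsic topology $\mathcal T$ and the correct matching between the ``$W\subset V$'' in the conclusion and the $\subset_a$ used throughout; since $\mathcal B(V)$ consists of honest block subspaces contained in $V$ and every such $W$ satisfies $W\subset_a V$, this causes no trouble. Beyond this, the argument is purely a matter of correctly assembling the hypotheses of Lemma~\ref{stabilization} from Lemmas~\ref{continuous} and~\ref{stab2}; the real work has already been done in those two lemmas, so I expect no essential obstacle, only the bookkeeping of the double index $(k,m)$.
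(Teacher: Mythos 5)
Your proposal is correct and follows essentially the same route as the paper: both apply the second (sequential) form of the Stabilization Lemma to the continuous functions $(\lambda_1,\ldots,\lambda_m,u_1,\ldots,u_m)\mapsto f'_W(\lambda_1u_1,\ldots,\lambda_mu_m)$ supplied by Lemmas~\ref{stab2} and~\ref{continuous}, using the monotonicity of $V\mapsto f'_V$ under $\subset_a$. The only cosmetic difference is that the paper secures separability by restricting to the finite-dimensional pieces $\Sigma_m(S_\infty\cap E_n)$, whereas you argue separability of $\Sigma_m(S_\infty)$ directly; both are fine.
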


\begin{proof} For $W\in\mathcal B(E)$ and $m<n$ define
$g_{m,n,W}:(0,1]^m\times \Sigma_{m}(S_{\infty}\cap E^n)\to\mathbb R$ by
$$ g_{m,,n,W}(\lambda_1,\ldots,\lambda_m,u_1,\ldots,u_m)=f'_W(\lambda_1u_1,\ldots,\lambda_mu_m).$$    Thus $g_{m,,n,W}$ is continuous by Lemma \ref{continuous}.  Since $(0,1]^m\times \Sigma_m(S_{\infty}\cap E^n)$ is separable for each $m$, $n$, we can
 apply the Stabilization Lemma \ref{stabilization}.\end{proof}

We can thus assume, under the hypotheses of the Lemma, (by passing to a block subspace) that $f$ has the property that $f'_V=f'_E$ for all block subspaces $V$.  If this happens we shall say that $f$ is {\it stable} and we write $f'$ for $f'_E.$  Note that $f'$ is admissible.

\begin{Prop}\label{admissible}  Let $f$ be a stable admissible function.  Suppose $(u_1,\ldots,u_r)\in \Sigma_{<\infty}(A_{\infty})$ and $V$ is a block subspace.  Then for any $\epsilon>0$ there exists $\xi\in V\setminus\{0\}$ so that  either:\newline
(a) $f(u_1,\ldots,u_r,\xi)<f'(u_1,\ldots,u_r)+\epsilon $ or \newline
(b) $f'(u_1,\ldots,u_r,\xi)<f'(u_1,\ldots,u_r)+\epsilon.$
\end{Prop}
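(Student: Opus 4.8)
The plan is to argue by contradiction. Write $c=f'(u_1,\dots,u_r)$ and suppose the conclusion fails for some $\epsilon>0$; then every $\xi\in V\setminus\{0\}$ satisfies both
\[
f(u_1,\dots,u_r,\xi)\ge c+\epsilon \qquad\text{and}\qquad f'(u_1,\dots,u_r,\xi)\ge c+\epsilon.
\]
I would aim to manufacture a block subspace $W\in\mathcal B(V)$ for which $f'_W(u_1,\dots,u_r)\ge c+\tfrac12\epsilon$. Since $f$ is stable, $f'_W=f'_E=f'$, so this would force $f'_W(u_1,\dots,u_r)=c$, a contradiction. Recalling that $f'_W(u_1,\dots,u_r)=\lim_m\inf\{f(u_1,\dots,u_r,x_1,\dots,x_s):x_i\in W\cap E^{(m)},\ s\ge1\}$ and that this inner infimum is nondecreasing in $m$, it suffices to build $W$ with the single property that \emph{every} finite block basis $(x_1,\dots,x_s)$ of $W$ satisfies $f(u_1,\dots,u_r,x_1,\dots,x_s)>c+\tfrac12\epsilon$.

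The first ingredient is a tail estimate coming from the second failure inequality together with stability. Fix $\zeta\in V\setminus\{0\}$. By stability $f'(u_1,\dots,u_r,\zeta)=f'_V(u_1,\dots,u_r,\zeta)=\sup_{m'}h^{\zeta}(m')$, where $h^{\zeta}(m')=\inf\{f(u_1,\dots,u_r,\zeta,w_1,\dots,w_t):w_i\in V\cap E^{(m')},\ t\ge1\}$ is nondecreasing in $m'$. As this supremum is $\ge c+\epsilon$, there is a level $N(\zeta)$ past which $h^{\zeta}(m')>c+\tfrac34\epsilon$; that is, \emph{every} block-basis continuation of $\zeta$ drawn from $V\cap E^{(N(\zeta))}$ keeps $f$ above $c+\tfrac34\epsilon$, while the first failure inequality covers the empty continuation. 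Thus each single vector $\zeta$ carries a threshold beyond which it cannot be continued cheaply.

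The main difficulty — and what I expect to be the only genuinely delicate point — is that $N(\zeta)$ depends on the actual vector $\zeta$, whereas a diagonal construction of $W$ requires controlling $N$ uniformly over the infinitely many $\zeta$ supported on an initial finite block. This is exactly where admissibility enters. Suppose $w_1,\dots,w_k$ have been chosen and put $G=[w_1,\dots,w_k]$; write $\zeta=\lambda\eta$ with $\eta\in S_\infty\cap G$ and $\lambda\in(0,1]$. First, with the direction $\eta$ fixed, I invoke admissibility property (ii) (perturbation of the attached scalar, uniformly in the appended tail): choosing a finite $\delta$-net $\mu_1,\dots,\mu_M$ of $(0,1]$ produces one level, valid for all $\lambda\in(0,1]$, past which $f(u_1,\dots,u_r,\lambda\eta,\cdot)>c+\tfrac12\epsilon$ on all continuations. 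Next I let the direction vary: the sphere $S_\infty\cap G$ is compact, and admissibility property (i) (perturbation of a single coordinate within a $\mathcal T$-neighborhood $U_{r+1}$, uniformly in the scalars and in the tail) lets me cover it by finitely many such neighborhoods. Taking the maximum of the finitely many levels so obtained yields one integer $q_k$ such that for \emph{every} $\zeta\in G\cap A_\infty\setminus\{0\}$ and every block-basis continuation from $V\cap E^{(q_k)}$ one has $f(u_1,\dots,u_r,\zeta,\dots)>c+\tfrac12\epsilon$.

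With $q_k$ in hand I choose $w_{k+1}\in V$ a block vector with $\min\supp w_{k+1}>q_k$ and set $W=[w_1,w_2,\dots]\in\mathcal B(V)$. Given any finite block basis $(x_1,\dots,x_s)$ of $W$, its leading vector $x_1$ lies in $G\cap A_\infty\setminus\{0\}$ for some $k$, while $x_2,\dots,x_s$ are supported beyond $w_{k+1}$ and hence lie in $V\cap E^{(q_k)}$; the uniform threshold then gives $f(u_1,\dots,u_r,x_1,\dots,x_s)>c+\tfrac12\epsilon$ (the case $s=1$ being the first failure inequality). Therefore $f'_W(u_1,\dots,u_r)\ge c+\tfrac12\epsilon$, contradicting stability as noted. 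Everything outside the uniformization step is routine bookkeeping with the monotonicity of the defining infima, so I expect the passage from the pointwise thresholds $N(\zeta)$ to the single level $q_k$, via properties (i) and (ii) and compactness of the finite-dimensional sphere, to be the crux of the argument.
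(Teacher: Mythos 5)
Your argument is correct and follows essentially the same route as the paper's proof: negate both alternatives, use admissibility (i) and (ii) together with compactness of the finite-dimensional sphere $S_\infty\cap[w_1,\ldots,w_k]$ to upgrade the pointwise thresholds supplied by the second failure inequality to a single level $q_k$, then diagonalize to get $W\subset V$ on which every continuation of $(u_1,\ldots,u_r)$ costs at least $c+\tfrac12\epsilon$, contradicting stability. The only cosmetic difference is at the end, where the paper extracts a cheap witness $(w_1,\ldots,w_s)$ from $W$ and shows it must be expensive, while you contradict $f'_W(u_1,\ldots,u_r)=c$ directly.
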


\begin{proof}  Let us assume that $(u_1,\ldots,u_r)\in \Sigma_r(E).$  Let us further assume that $V$ is a block subspace so that
for any $\xi\in V$ we have
\begin{equation}\label{ineq} f(u_1,\ldots,u_r,\xi)\ge f'(u_1,\ldots,u_r)+\epsilon,\qquad f'(u_1,\ldots,u_r,\xi)\ge f'(u_1,\ldots,u_r)+\epsilon.\end{equation}

Let $(v_j)_{j=1}^{\infty}$ be a block basis which is a basis of $V.$
We choose an increasing sequence of integers $(q_k)_{k=1}^{\infty}$ as follows.  Let $q_1=1.$  Assume $q_1,\ldots,q_k$ have been chosen.  Let $m_0$ be the smallest integer so that  $v_{q_k}\in E_{m_0}.$
Then for every $\xi\in S_{\infty}\cap [v_{q_1},\ldots,v_{q_k}]$ \eqref{ineq} holds.
We may pick a neighborhood $U$ of $0$ in $(E,\mathcal T)$ so that
$$ |f(u_1,\ldots,u_r,\lambda \xi,w_1\ldots,w_s)-f(u_1,\ldots,u_r,\lambda\eta,w_1,\ldots,w_s)|<\epsilon/8$$ when $\xi-\eta\in U$, $(w_1,\ldots,w_s)\in \Sigma_{<\infty}(A_{\infty}\cap E^{(m_0)})$ and $0<\lambda\le 1.$  By compactness there is a finite subset $(\xi_1,\ldots,\xi_t)$ of
$S_{\infty}\cap[v_{q_1},\ldots,v_{q_k}]$ such that $\eta\in S_{\infty}\cap[v_{q_1},\ldots,v_{q_k}]$ implies $\eta-\xi_j\in U$ for some $j.$   Now pick an integer $N$ large enough so that $|\lambda-\mu|<1/N$ implies
$$ |f(u_1,\ldots,u_r,\lambda\xi_j,w_1,\ldots,w_s)-f(u_1,\ldots,u_r,\mu\xi_j,w_1,\ldots,w_s)|<\epsilon/8$$ whenever $(w_1,\ldots,w_s)\in \Sigma_{<\infty}(E^{(m_0)}).$
Now by our assumptions we can pick $m\ge m_0$ so that
$$ f(u_1,\ldots,u_r,\tfrac{k}{N}\xi_j,w_1,\ldots,w_s)>f'(u_1,\ldots,u_r)+\tfrac34\epsilon$$ whenever $1\le j\le t,1\le k\le N$ and $(w_1,\ldots,w_s)\in \Sigma_{<\infty}(E^{(m)}).$
Hence
$$ f(u_1,\ldots,u_r,\lambda\xi_j,w_1,\ldots,w_s)>f'(u_1,\ldots,u_r)+\tfrac58\epsilon$$ whenever $1\le j\le t, 0<\lambda\le 1$ and $(w_1,\ldots,w_s)\in \Sigma_{<\infty}(E^{(m)})$, and thus
\begin{equation}\label{ineq3} f(u_1,\ldots,u_r,\lambda\xi,w_1,\ldots,w_s)>f'(u_1,\ldots,u_r)+\tfrac12\epsilon\end{equation}
whenever $0< \lambda\le 1$ and $\xi\in S_{\infty}\cap[v_{q_1},\ldots,v_{q_k}].$

Then we pick $q_{k+1}>q_k$ so that $v_{q_{k+1}}\in E^{(m)}.$  This completes the inductive construction.
Now let $W=[v_{q_1},v_{q_2},\ldots].$   There exists $(w_1,\ldots,w_s)\in\Sigma_s(W)$ (where $s>0$) so that
$f(u_1,\ldots,u_r,w_1,\ldots,w_s)<f'(u_1,\ldots,u_r)+\epsilon/2.$

If $s=1$ then $\xi=w_1$ contradicts \eqref{ineq}.  If $s>1$ let $\lambda\xi=w_1=\sum_{j=1}^la_jv_{q_j}$ where $a_l\neq 0.$  Then by the selection of $q_{l+1}$ and \eqref{ineq3}  we see that
$f(u_1,\ldots,u_r,w_1,\ldots,w_s)>f'(u_1,\ldots,u_r)+\epsilon/2$, a contradiction.
\end{proof}

\begin{Lem}  If $f:\Sigma_{<\infty}(A_{\infty})\to [0,\infty)$ is admissible then the function $g:\Sigma_{<\infty}(A_{\infty})\to [0,\infty)$ given by
$g(u_1,\ldots,u_r)=1$ if $r=0$ or $r=1$ and
$$ g(u_1,\ldots,u_r)=\inf\{f(v_1,\ldots,v_s):\ (v_1,\ldots,v_s)\prec (u_1,\ldots,u_r),\ 1\le s<r,\ \} \qquad (r>1)$$
is admissible.\end{Lem}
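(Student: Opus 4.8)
The plan is to verify the two clauses of admissibility for $g$ directly, after first isolating the one structural feature that makes everything work: $g$ is invariant under positive rescaling of its arguments. Concretely, if $(x_1,\ldots,x_n)\in\Sigma_n(A_\infty)$ and $c_1,\ldots,c_n>0$ satisfy $(c_1x_1,\ldots,c_nx_n)\in\Sigma_n(A_\infty)$, then $g(c_1x_1,\ldots,c_nx_n)=g(x_1,\ldots,x_n)$. Indeed, a block subbasis $(z_1,\ldots,z_t)\prec(c_1x_1,\ldots,c_nx_n)$ arises from a block subbasis of $(x_1,\ldots,x_n)$ by absorbing each $c_j$ into the (arbitrary) block coefficients, and conversely; since the constraint $z_l\in A_\infty$ is intrinsic to $z_l$, the two infima defining $g$ run over identical sets of tuples. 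Boundedness of $g$ is immediate, as $0\le g\le\max(1,\sup f)$.

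Granting this invariance, clause (ii) is trivial: both $g(\lambda_1u_1,\ldots,\lambda_ru_r,v_1,\ldots,v_s)$ and $g(\mu_1u_1,\ldots,\mu_ru_r,v_1,\ldots,v_s)$ equal $g(u_1,\ldots,u_r,v_1,\ldots,v_s)$, so any $\delta>0$ works. Likewise, in clause (i) the scalars $(\lambda_j)\in(0,1]^r$ drop out on both sides, so it suffices to produce a sequence $(U_n)$ of $\mathcal T$-neighborhoods of $0$ with $u_j-w_j\in U_j$ forcing $|g(u_1,\ldots,u_r)-g(w_1,\ldots,w_r)|<\epsilon$ for $(u_j),(w_j)\in\Sigma_r(S_\infty)$.

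To prove this I would transfer near-infimizers across the perturbation. Fix a competitor $(v_1,\ldots,v_s)\prec(u_1,\ldots,u_r)$ for $g(u_1,\ldots,u_r)$, say $v_i=\sum_{j\in I_i}a_ju_j$ with $(v_i)\in\Sigma_s(A_\infty)$, and define its twin $v_i'=\sum_{j\in I_i}a_jw_j$. The crucial normalization observation is that, since $(u_j)$ and $(w_j)$ are block bases with $\|u_j\|_\infty=\|w_j\|_\infty=1$ and hence pairwise disjoint supports, one has $\|v_i\|_\infty=\|v_i'\|_\infty=\max_{j\in I_i}|a_j|=:\lambda_i$; in particular $|a_j|\le1$, $(v_i')\in\Sigma_s(A_\infty)$ is a legitimate competitor for $g(w_1,\ldots,w_r)$, and the normalizations $\tilde v_i=v_i/\lambda_i,\ \tilde v_i'=v_i'/\lambda_i$ lie in $\Sigma_s(S_\infty)$ with the \emph{same} scalars $(\lambda_i)$ and with $\tilde v_i-\tilde v_i'=\sum_{j\in I_i}(a_j/\lambda_i)(u_j-w_j)$, a combination with coefficients of modulus $\le1$. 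Starting from neighborhoods $W_i=\{x:|x_k|<\zeta^{(i)}_k\ \forall k\}$ supplied by clause (i) for $f$ (applied with $\epsilon/2$), I would set $U_j=\{x:|x_k|<\eta^{(j)}_k\ \forall k\}$ with $\eta^{(j)}_k=2^{-j}\min_{1\le l\le j}\zeta^{(l)}_k$; since every block obeys $\min I_i\ge i$, this forces $\sum_{j\in I_i}\eta^{(j)}_k\le\sum_{j\ge i}\eta^{(j)}_k\le\zeta^{(i)}_k$, hence $\tilde v_i-\tilde v_i'\in W_i$. Clause (i) for $f$ then yields $|f(v_1,\ldots,v_s)-f(v_1',\ldots,v_s')|<\epsilon/2$, and choosing $(v_i)$ within $\epsilon/2$ of the infimum gives $g(w_1,\ldots,w_r)\le f(v_1',\ldots,v_s')<g(u_1,\ldots,u_r)+\epsilon$; symmetry of the boxes $U_j$ supplies the reverse inequality.

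The main obstacle is the uniform, $r$-independent choice of the neighborhoods $(U_j)$ in clause (i): a single $U_j$ must simultaneously control the perturbation of every block $v_i$ into which coordinate $j$ enters, for block bases of arbitrary length. This is exactly what the two observations—that the subbasis coefficients are bounded by $1$, and that block supports are disjoint so that $\|v_i\|_\infty=\|v_i'\|_\infty$—make possible, via the summability choice $\eta^{(j)}_k=2^{-j}\min_{l\le j}\zeta^{(l)}_k$ together with the combinatorial fact $\min I_i\ge i$.
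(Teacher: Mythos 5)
Your argument is correct and follows essentially the same route as the paper's proof: exploit the scale-invariance $g(\lambda_1u_1,\ldots,\lambda_ru_r)=g(u_1,\ldots,u_r)$ to dispose of clause (ii) and reduce clause (i) to uniform $\mathcal T_{bx}$-continuity on $\Sigma_{<\infty}(S_{\infty})$, then transfer near-infimizing block subbases across the perturbation using that the coefficients have modulus at most $1$ and that the $i$-th block involves only indices $j\ge i$. The only cosmetic differences are that the paper packages the scalar infimum into an auxiliary function $h(u_1,\ldots,u_m)=\inf_{\lambda\in(0,1]^m}f(\lambda_1u_1,\ldots,\lambda_mu_m)$ where you apply clause (i) of admissibility directly with the normalizing scalars $\lambda_i$, and that it uses circled neighborhoods with $U'_n+\cdots+U'_{n+k}\subset U_n$ where you use explicit boxes with geometric decay.
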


\begin{proof} Note that $g$ satisfies $g(\lambda_1u_1,\ldots,\lambda_mu_m)=g(u_1,\ldots,u_m)$ if $(u_1,\ldots,u_m)\in \Sigma_{<\infty}(S_{\infty})$
and $(\lambda_1, \ldots , \lambda_m) \in (0,1]^m$.   Hence it
suffices to show that $g$ is uniformly $\mathcal
T_{bx}-$continuous on $\Sigma_{<\infty}(S_{\infty}).$  Note that,
 if for  all$(u_1,\ldots,u_m)\in \Sigma_{<\infty}(S_{\infty})$ we
define
$$ h(u_1,\ldots,u_m)=\inf\{ f(\lambda_1u_1,\ldots,\lambda_mu_m):\ (\lambda_1,\ldots,\lambda_m)\in (0,1]^m\},$$
 then $h$ is uniformly $\mathcal T_{bx}-$continuous and
$$ g(u_1,\ldots,u_r)=\inf\{h(v_1,\ldots,v_s):\ (v_1,\ldots,v_s)\prec (u_1,\ldots,u_r),\ 1\le s<r,\ \} \qquad (r>1)$$
 Suppose $\epsilon>0.$  Then there is a sequence $(U_n)_{n=1}^{\infty}$ of $\mathcal T-$neighborhoods of zero so that if $(u_1,\ldots,u_n),(v_1,\ldots,v_n)\in \Sigma_{<\infty}(S_{\infty})$ and $u_j-v_j\in U_j$ for $1\le j\le n$ then
$$ |h(u_1,\ldots,u_n)-h(v_1,\ldots,v_n)|<\epsilon.$$
Pick a sequence of circled neighborhoods of zero, $(U'_n)_{n=1}^{\infty},$ so that
$U'_n+U'_{n+1}+\cdots+U'_{n+k}\subset U_n$ whenever $k\ge n.$  Then suppose $(u_1,\ldots,u_n),(v_1,\ldots,v_n)\in \Sigma_{<\infty}(S_{\infty})$ with $u_j-v_j\in U'_j$ for $1\le j\le n.$  Assume $\eta>0$ and pick $(x_1,\ldots,x_r)\prec (u_1,\ldots,u_n)$ with $r<n$ so that
$h(x_1,\ldots,x_r)<g(u_1,\ldots,u_n)+\eta.$  If
$x_j=\sum_{i=k+1}^la_iu_i$ let
$y_j=\sum_{i=k+1}^la_iv_i.$  Then $|a_i|\le 1$ and so
$x_j-y_j\in U'_{k+1}+\cdots+U'_l\subset U_{j}$ since $j\le k+1.$  Hence
$$ h(y_1,\ldots,y_r)<g(u_1,\ldots,u_n)+\epsilon+\eta$$ and so
$$ g(v_1,\ldots,v_n)\leq g(u_1,\ldots,u_n)+\epsilon.$$  By symmetry
$$ |g(v_1,\ldots,v_n)-g(u_1,\ldots,u_n)|\le \epsilon$$ and hence $g$ is uniformly $\mathcal T_{bx}-$continuous.\end{proof}

We shall say that a strategy $\Phi$ is $(\epsilon,V)$-effective for $f$ where $\epsilon>0$ and $V\in\mathcal B(E)$ if
for every sequence of block subspaces $V_j\subset V$ there exists $n\in\mathbb N$ so that
$$ f(\Phi(\emptyset,V_1,\ldots,V_n))<\sup_{W\in \mathcal B(E)}f'_W(\emptyset)+\epsilon.$$  If $(u_1,\ldots,u_r)\in \Sigma_{<\infty}(E)$ we shall say that a strategy $\Phi$ is $(\epsilon,u_1,\ldots,u_r,V)$-effective for $f$ where $\epsilon>0$ and $V\in\mathcal B(E)$ if
for every sequence of block subspaces $V_j\subset V$ there exists $n\in\mathbb N$ so that
$$ f(\Phi(u_1,\ldots,u_r,V_1,\ldots,V_n))<\sup_{W\in\mathcal B(E)} f'_W(u_1,\ldots,u_r)+\epsilon.$$

\begin{Thm}\label{first}  Suppose $f:\Sigma_{<\infty}(A_{\infty})\to [0,\infty)$ is admissible.  Then, given $\epsilon>0$ there is a block subspace $V$ and a strategy $\Phi$ which is $(\epsilon,V)-$effective for $f$.
 \end{Thm}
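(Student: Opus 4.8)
The plan is to pass to a subspace on which $f$ is stable, recast the target value in terms of $f'$, build $\Phi$ by iterating Proposition \ref{admissible} against a summable schedule of tolerances, and then force the favourable alternative of that proposition to occur after finitely many moves by a well-foundedness argument.

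First I would invoke Lemma \ref{stab3} (in its countable form, since auxiliary functions will be needed below) to replace $E$ by a block subspace $V_0$ on which $f$ is \emph{stable}, so that $f'_W=f'_{V_0}=:f'$ for every $W\in\mathcal B(V_0)$. Since $V_0\in\mathcal B(E)$ we have $f'(\emptyset)=f'_{V_0}(\emptyset)\le\sup_{W\in\mathcal B(E)}f'_W(\emptyset)$, so it is enough to build, inside $V_0$, a strategy forcing $f<f'(\emptyset)+\epsilon$. The decisive feature of stability is that it survives restriction to every space the opponent can play: a block subspace $V_j\subset V_0$ lies in $\mathcal B(V_0)$, whence $f'_{V_j}(a)=f'(a)$ for all tuples $a$. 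Reading off the defining $\lim_m\inf$ of $f'_{V_j}(a)$, this says that for each $\delta>0$ there are arbitrarily far-out finite block bases $(w_1,\ldots,w_s)$ lying \emph{inside the opponent's space} $V_j$ with $f(a,w_1,\ldots,w_s)<f'(a)+\delta$. This ability to relocate near-optimal realizers into any prescribed subspace is precisely what an online strategy needs.

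Now fix tolerances $\epsilon_k>0$ with $\sum_k\epsilon_k<\epsilon$ and define $\Phi$ by recursion. At a position with history $a=(u_1,\ldots,u_r)$ and the opponent's space $V=V_{r+1}$, apply Proposition \ref{admissible} to $a$, $V$, $\epsilon_{r+1}$ to obtain $\xi\in V$ with either (a) $f(a,\xi)<f'(a)+\epsilon_{r+1}$ or (b) $f'(a,\xi)<f'(a)+\epsilon_{r+1}$, and set $u_{r+1}=\xi$. Alternative (b) propagates the invariant $f'(u_1,\ldots,u_r)<f'(\emptyset)+\sum_{k\le r}\epsilon_k<f'(\emptyset)+\epsilon$, and whenever alternative (a) is triggered at a stage $n$ it yields $f(u_1,\ldots,u_n)<f'(u_1,\ldots,u_{n-1})+\epsilon_n<f'(\emptyset)+\epsilon$, the desired inequality. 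Thus $\Phi$ succeeds the moment (a) occurs, and the entire content of the theorem is that the opponent cannot keep the play in alternative (b) indefinitely.

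This termination is the hard part, and I expect to spend most of the effort there. One cannot argue that (a) is simply always available: the infimum defining $f'(a)$ genuinely requires block bases of length $\ge 2$, and the minimal such length need not remain bounded as the realizer is pushed out (a concrete exchange-of-$\sup_m$-and-$\inf_s$ obstruction), so a single far-out block need not realize $f'(a)$. I would therefore attach to each reachable tuple an ordinal rank $\rho$, defined by transfinite recursion so that rank $0$ means a far-out single block already wins and rank $\le\gamma$ means one can append a block, keeping $f'$ controlled, and drop to rank $<\gamma$; intuitively $\rho$ counts the appendings needed to expose a genuine single-block win. The two delicate points, each of which I expect to be an obstacle, are: that $\rho$ is a \emph{countable} ordinal on every reachable tuple (here I would re-use the Stabilization Lemma, stabilizing the iterated derivatives $f',f'',\ldots$ simultaneously via the countable version of Lemma \ref{stab3}, so that the derivative process is well-founded), and that the block returned by alternative (b) of Proposition \ref{admissible} can be arranged to be rank-reducing, i.e. $\rho(u_1,\ldots,u_{r+1})<\rho(u_1,\ldots,u_r)$ at every (b)-move. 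Granting these, an infinite run of (b)-moves would yield an infinite strictly decreasing sequence of ordinals, which is impossible; hence (a) occurs at some finite $n$, and $\Phi$ is $(\epsilon,V_0)$-effective.
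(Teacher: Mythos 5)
Your setup (stabilize via Lemma \ref{stab3}, normalize so the target is $f'(\emptyset)+\epsilon$, choose each move by Proposition \ref{admissible} against a summable schedule $\sum_k\epsilon_k<\epsilon$, and observe that the first occurrence of alternative (a) wins) matches the paper's, and you correctly isolate the crux: showing the opponent cannot stay in alternative (b) forever. But your termination argument has a genuine gap. If you run the dichotomy on $f$ itself, an all-(b) play merely maintains $f'(u_1,\ldots,u_n)<f'(\emptyset)+\epsilon'$ for all $n$, i.e.\ far-out continuations of every initial segment can make $f$ small; this is perfectly consistent with $f(u_1,\ldots,u_n)$ being large for every $n$, so the tree of (b)-plays need not be well-founded and your rank $\rho$ need not be defined (let alone a countable ordinal). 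Stabilizing the iterated derivatives $f',f'',\ldots$ does not repair this: the well-foundedness you need is exactly the termination statement you are trying to prove, and nothing in your sketch feeds the essential hypothesis --- that $f'_V(\emptyset)$ is small for \emph{every} block subspace $V$ --- into the termination step. That hypothesis only tells you that any infinite play $(u_n)$ contains, inside some initial segment, a proper block sub-basis $(v_1,\ldots,v_s)\prec(u_1,\ldots,u_r)$ with $f(v_1,\ldots,v_s)$ small; it says nothing about the initial segments themselves, which is precisely the obstruction you flag but do not overcome.

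The paper's device for converting that sub-block-basis information into termination is the step your proposal is missing. One sets $g(u_1,\ldots,u_r)=\inf\{f(v_1,\ldots,v_s):\ (v_1,\ldots,v_s)\prec(u_1,\ldots,u_r),\ 1\le s<r\}$ (shown admissible in the lemma preceding the theorem) and runs your strategy not for $f$ but for the penalized function $h=f+2(\epsilon-2g)_+\ge f$. A minimal-length argument gives $h'(\emptyset)<\epsilon/2$ (choose $(v_1,\ldots,v_s)$ with $f(v_1,\ldots,v_s)<\epsilon/2$ and $s$ minimal, so the penalty term vanishes there), while along any infinite play the eventual appearance of a sub-basis with $f<\epsilon/4$ forces $g<\epsilon/4$ from some point on and hence $h'(u_1,\ldots,u_r)>\epsilon$ for some $r$. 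Since (b)-moves keep $h'(u_1,\ldots,u_n)<h'(\emptyset)+\sum_j\epsilon_j<\epsilon$, alternative (a) for $h$ must fire at some finite $n$, and $h\ge f$ then yields $f(u_1,\ldots,u_n)<\epsilon$. Without this penalization (or an equivalent mechanism that actually uses the largeness hypothesis in the termination step) your proof does not close.
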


\begin{proof}  We assume (after stabilization) that $f'_V(\emptyset)=0$ for all $V\in\mathcal B(E)$; indeed if $a=\sup_{V\in\mathcal B(E)}f'_V(\emptyset)$ then replace $f$ by $\max(f-a,0).$
  We consider the admissible function $$h(u_1,\ldots,u_r)=f(u_1,\ldots,u_r)+2(\epsilon-2g(u_1,\ldots,u_r))_+.$$  By Lemma \ref{stab2} we can pass to a block subspace where $h$ is stable; so let us assume $h$ is stable on $E$.

  We first claim that if $(u_1,\ldots,u_n,\ldots)\in \Sigma_{\infty}(A_\infty)$ then there exists $n$ so that $h'(u_1,\ldots,u_n)>\epsilon.$  Indeed, let $W=[u_1,\ldots,u_n,\ldots].$  Then, since $f'_W(\emptyset)=0,$  there exists $(v_1,\ldots,v_s)\in \Sigma_{<\infty}(W)$ with $f(v_1,\ldots,v_s)<\epsilon/4.$  Then we may find $r>s$ so that $(v_1,\ldots v_s)\prec (u_1,\ldots,u_r).$  Hence for any $(x_1,\ldots,x_t)$ such that $(u_1,\ldots,u_r,x_1,\ldots,x_t)\in \Sigma_{<\infty}(A_{\infty})$ we have $$h(u_1,\ldots,u_r,x_1,\ldots,x_t)\ge 2(\epsilon -2f(v_1,\ldots,v_s))$$ so that
  $$ h'(u_1,\ldots,u_r)\ge 2(\epsilon -2f(v_1,\ldots,v_s))>\epsilon,$$ which proves the claim.

  On the other hand, given any block subspace $V$, there exists a minimal $s \geq 1$ so that we can find $(v_1,\ldots,v_s)\in \Sigma_{<\infty}(V)$ with $f(v_1,\ldots,v_s)<\epsilon/2.$    Thus $g(v_1,\ldots,v_s)\ge \epsilon/2$ which implies $h(v_1,\ldots,v_s)<\epsilon/2.$  Hence $h'(\emptyset)<\epsilon/2.$

    We now use a strategy for $h$ indicated by Proposition \ref{admissible}. Suppose $\epsilon_j>0$ for each $j\ge 0$ and $\sum\epsilon_r<\epsilon/2.$  Given $(u_1,\ldots,u_r)\in \Sigma_{<\infty}(E)$ and $V\in\mathcal B(E)$ we define $\Phi(u_1,\ldots,u_r,V)$ to be $(u_1,\ldots,u_r,\xi)$ where $\xi\in V\setminus\{0\}$ is chosen so that $h'(u_1,\ldots,u_r,\xi)<h'(u_1,\ldots,u_r)+\epsilon_r$ or $h(u_1,\ldots,u_r,\xi)<h'(u_1,\ldots,u_r)+\epsilon_r.$    Let $(V_j)_{j=1}^{\infty}$ be a sequence in $\mathcal B(E)$ and let $(u_1,\ldots,u_r,\ldots)=\Phi(\emptyset;V_1,\ldots,).$  Then, by our previous claim there exists a first $n\ge 1$ so that  $h'(u_1,\ldots,u_n)>h'(u_1,\ldots,u_{n-1})+\epsilon_{n-1}.$   Hence $$h(u_1,\ldots,u_n)<h'(u_1,\ldots,u_{n-1})+\epsilon_{n-1}<h'(\emptyset)+\sum_{j=0}^{n-1}\epsilon_j<\epsilon.$$
\end{proof}

We need an obvious extension of this result.

\begin{Thm}\label{first2}  Suppose $f:\Sigma_{<\infty}(A_{\infty})\to [0,\infty)$ is admissible.  Then
there is a block subspace $V$ such that for each $(u_1,\ldots,u_r)\in \Sigma_{<\infty}( A_{\infty})$ there is a strategy $\Phi_{u_1,\ldots,u_r}$ which is $(\epsilon,u_1,\ldots,u_r,V)-$effective for $f$.\end{Thm}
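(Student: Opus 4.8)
The plan is to reduce the assertion, for each fixed starting sequence $u=(u_1,\ldots,u_r)$, to a relativized instance of Theorem~\ref{first} applied to the tail function $f^{u}(v)=f(u_1,\ldots,u_r,v_1,\ldots,v_s)$, and then to arrange a \emph{single} block subspace $V$ that serves every $u$ at once. Since $f$ is, after one application of Lemma~\ref{stab3}, stable, one checks directly from the definition of the $'$-operation that $(f^{u})'_W(v)=f'_W(u_1,\ldots,u_r,v)=f'(u_1,\ldots,u_r,v)$ is independent of $W$; thus every $f^{u}$ is automatically stable and $(f^{u})'(\emptyset)=f'(u_1,\ldots,u_r)=:c_u=\sup_{W}f'_W(u_1,\ldots,u_r)$. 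Passing from $f^{u}$ to $\hat f^{u}=(f^{u}-c_u)_+$, which is again stable because $t\mapsto(t-c_u)_+$ is nondecreasing and hence commutes with the infima defining the $'$-operation, reduces us to the normalized situation $\sup_W(\hat f^{u})'_W(\emptyset)=0$ treated in Theorem~\ref{first}; its conclusion $\hat f^{u}(\text{moves})<\epsilon$ unwinds to $f(u_1,\ldots,u_r,\text{moves})<c_u+\epsilon$, which is exactly $(\epsilon,u_1,\ldots,u_r,V)$-effectiveness.

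The only nontrivial point is uniformity of $V$: the block subspace produced by Theorem~\ref{first} is the one on which the auxiliary function $h^{u}=\hat f^{u}+2(\epsilon-2\hat g^{u})_+$ (with $\hat g^{u}$ the $g$-modification of $\hat f^{u}$) becomes stable, and a priori this subspace depends on $u$, of which there are uncountably many. I would remove this dependence by the following device. For each fixed $r\ge 0$ define one function $H_r$ on $\Sigma_{<\infty}(A_\infty)$ by
\[
H_r(u_1,\ldots,u_r,v_1,\ldots,v_s)=\big(f(u,v)-f'(u)\big)_+\;+\;2\Big(\epsilon-2\big(g_r(u,v)-f'(u)\big)_+\Big)_+,
\]
where $u=(u_1,\ldots,u_r)$, $f'(u)=f'(u_1,\ldots,u_r)$, and $g_r(u,v)=\inf\{f(u_1,\ldots,u_r,w_1,\ldots,w_t):(w)\prec(v_1,\ldots,v_s),\,1\le t<s\}$; thus $H_r(u,\cdot)=h^{u}$ for every $u$ of length $r$. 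Each $H_r$ is admissible — $f'$ is admissible, $g_r$ is admissible by the argument already used to prove $g$ admissible, and the operations $(\cdot)_+$, differences, and sums preserve admissibility — and there are only countably many $H_r$. Applying Lemma~\ref{stab3} to the countable family $\{f\}\cup\{H_r\}_{r\ge 0}$ yields one block subspace $V$ on which $f$ and every $H_r$ are stable.

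The crucial observation is that stability of $H_r$ on $V$ forces stability of every relativized $h^{u}$ with $|u|=r$ on $V$, uniformly in $u$, including those $u$ whose support is disjoint from $V$: since the $'$-operation only appends a tail after $v$ and leaves the prefix $u$ untouched, $(h^{u})'_W(v)=(H_r)'_W(u,v)$, and the equality $(H_r)'_W=(H_r)'_V$ (valid as functions, for all $W\in\mathcal B(V)$) gives $(h^{u})'_W(v)=(H_r)'_V(u,v)$, independent of $W\subset V$. Consequently, for this single $V$ the proof of Theorem~\ref{first} may be run for each $u$ separately: Proposition~\ref{admissible} supplies the move-by-move strategy $\Phi_{u_1,\ldots,u_r}$, choosing $\xi$ in the adversary's $V_j\subset V$ taken far enough out in some $E^{(p)}$ to respect the block condition; the ``claim'' shows that $(h^{u})'$ must eventually exceed $\epsilon$ along any infinite continuation in $V$ while $(h^{u})'(\emptyset)<\epsilon/2$, and the first jump forces $h^{u}(\text{moves})<\epsilon$, hence $f(u_1,\ldots,u_r,\text{moves})<c_u+\epsilon=\sup_W f'_W(u_1,\ldots,u_r)+\epsilon$.

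The step I expect to be the main obstacle is precisely this uniform stabilization: verifying that the single countable family $\{H_r\}$ faithfully encodes all of the uncountably many relativized auxiliary functions, and that stabilizing it really does transfer to stability of each $h^{u}$ on the same $V$. The remaining points — the admissibility of $H_r$ (in particular of $f'$ and of $g_r$), the harmless bookkeeping that keeps the appended vectors in some $E^{(p)}$ so that genuine block bases are formed, and the transcription of the relative-penalty constants in the ``claim'' and in the estimate $(h^{u})'(\emptyset)<\epsilon/2$ — are routine adaptations of the arguments already given for Theorem~\ref{first}.
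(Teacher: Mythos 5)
Your proposal is correct, and the first step (relativizing to the tail function $f^{u}$ and normalizing by $c_u=f'(u)$, which is legitimate since $f$ is stable and $(t-c)_+$ commutes with the increasing infima defining the prime operation) coincides with the paper's opening move. Where you genuinely diverge is the uniformization. The paper fixes a countable dense set $D_{m,r}\subset \Sigma_r(E_m\cap A_\infty)$ of prefixes, diagonalizes the corresponding subspaces to get one $V$ with $V\subset V_n+F_n$, and then handles an \emph{arbitrary} prefix $(v_1,\ldots,v_r)$ by borrowing the strategy of a nearby $(u_1,\ldots,u_r)\in D$ and invoking admissibility to transfer effectiveness, at the cost of degrading $\epsilon/2$ to $\epsilon$. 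You instead encode the entire uncountable family $\{h^{u}:|u|=r\}$ into the single function $H_r$ of the concatenated variable $(u,v)$, observe that $(h^{u})'_W(v)=(H_r)'_W(u,v)$ because the prime operation only touches the tail, and stabilize the countable family $\{H_r\}$ once via Lemma~\ref{stab3}; this handles every prefix exactly, with no density argument, no strategy transfer, and no loss of $\epsilon/2$. The price is that you must verify admissibility of the composite $H_r$: this rests on Lemma~\ref{stab2} for the term $f'(u)$, on a prefix-relativized rerun of the unnumbered lemma for $g_r$ (which works because normalized blocks of disjointly supported $S_\infty$-vectors stay in $S_\infty$, so condition (i) survives the infimum, while $g_r$ is scalar-invariant in the $v$-variables and condition (ii) of $f$ controls the $u$-scalars), and on closure of admissibility under sums and Lipschitz post-composition. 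Those verifications, together with the small bookkeeping you already flag (the convention for $g_r$ on tails of length $\le 1$, defining $H_r$ and $\Phi_{u_1,\ldots,u_r}$ arbitrarily off their intended domains, and forcing $\xi$ far enough out to keep block sequences), are routine, so the argument closes. Your version is arguably cleaner in that it isolates all the analysis into one admissibility check and lets the Stabilization Lemma do the uniformization; the paper's version needs less machinery per prefix but pays with the separability-and-approximation step.
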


\begin{proof}  It is easy to obtain, for each $(u_1,\ldots,u_r)$ the existence of a block subspace $V_{u_1,\ldots,u_r}$ and a strategy $\Psi_{u_1,\ldots,u_r}$ which is $(\epsilon/2,u_1,\ldots,u_r,V_{u_1,\ldots,u_r})-$effective for $f$.  Indeed, suppose $u_r\in E_m$; define
$$ f_1(v_1,\ldots,v_s)= f(u_1,\ldots,u_r,v_1,\ldots,v_s)\qquad (v_1,\ldots,v_s)\in \Sigma_{<\infty}(A_{\infty}\cap E^{(m)})$$  and apply the preceding theorem to $f_1$. ($\Psi_{u_1,\ldots,u_r}$ has to be defined in some arbitrary fashion for $(w_1,\ldots,w_s)$ which do not have $(u_1,\ldots,u_r)$ as an initial segment.)  Furthermore it can be seen that for each block subspace $W$ we can choose $V_{u_1,\ldots,u_r}\subset W$.

To obtain a single block subspace $V$ we first construct a dense
countable subset $D_{m,r}$ in each $\Sigma_{r}(E_m\cap\mathcal
A_\infty).$  We arrange the elements of $D=\cup_{m,r}D_{m,r}$ as a
sequence and hence find a descending sequence of subspaces $(V_n)$
so that the strategy $\Phi_{u_1,\ldots,u_r}$ is
$(\epsilon/2,u_1,\ldots,u_r,V_n)-$effective for $f$ when
$(u_1,\dots, u_r)$ is the $n$th. member of $D.$  If we select $V$
to be block subspace so that $V\subset V_n+F_n$ for some
finite-dimensional $F_n$ for each $n$, then (via a simple
modification) each $\Phi_{u_1,\ldots,u_r}$ is
$(\epsilon/2,u_1,\ldots,u_r,V)-$effective for $f$.   Finally we
observe that if $(v_1,\ldots,v_r)\in \Sigma_{r}(E_m)$ is arbitrary
and we then choose $(u_1,\ldots,u_r)\in D$ close enough, we can
define a strategy by $$
\Phi_{v_1,\ldots,v_r}(v_1,\ldots,v_r,w_1,\ldots,w_s)=\Psi_{u_1,\ldots,u_r}(u_1,\ldots,u_r,w_1,\ldots,w_s)
$$ (and arbitrarily otherwise) then we will have that
$\Phi_{v_1,\ldots,v_n}$ is $(\epsilon,v_1,\ldots,v_r,V)-$effective
for $f$.\end{proof}

\section{The infinite case}

We now turn to the infinite case. Suppose $f:\Sigma_{\infty}(A_{\infty})\to [0,\infty)$ is a bounded uniformly $\mathcal T_{bx}$-continuous
function.  We may define $f'_V:\Sigma_{<\infty}(A_{\infty})\to [0,\infty)$ in a precisely analogous way.  As before we adopt the convention that $f=+\infty$ on $E^{\mathbb N}\setminus \Sigma_{\infty}(A_{\infty}).$
Let
\begin{align*} &f'_V(u_1,\ldots,u_r)=\\ &\lim_{m\to\infty}\inf \Big\{ f(u_1,\ldots,u_r,v_1,\ldots,v_s,\ldots):\ v_j\in V\cap E^{(m)},\ j=1,2,\ldots\Big\}.\end{align*}
It is clear that the functions $\{f'_V:\ V\in\mathcal B(E)\}$ are equi-uniformly $\mathcal T_{bx}-$continuous.

Proceeding in the same manner as before we can show:

\begin{Prop}  If $f_n:\Sigma_\infty(A_\infty)\to \mathbb R$ is any countable family of bounded $\mathcal T_{bx}$ uniformly continuous
functions, there is a block subspace $V$ of $E$ so that $f'_{W}=f'_{V}$ whenever $W\in \mathcal B(V).$\end{Prop}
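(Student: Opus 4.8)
The plan is to mirror the proof of Lemma~\ref{stab3}, now using the more general (second) form of the Stabilization Lemma~\ref{stabilization} applied simultaneously to the whole countable family $(f_n)$. Throughout I write $f'_{n,V}$ for the derived function associated with $f_n$ and a block subspace $V$, and the goal is to produce one $V$ serving all the $f_n$ at once.

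First I would record the two structural facts about the derived functions that drive the machinery. Boundedness of each $f_n$ gives boundedness of each $f'_{n,V}$, and the equi-uniform $\mathcal T_{bx}$-continuity of the family $\{f'_{n,V}:V\in\mathcal B(E)\}$ noted just before the statement shows in particular that each individual $f'_{n,V}:\Sigma_{<\infty}(A_\infty)\to[0,\infty)$ is bounded and uniformly $\mathcal T_{bx}$-continuous; by Lemma~\ref{easy}(i) each $f'_{n,V}$ is therefore admissible. Second, exactly as in the finite case one has the monotonicity $f'_{n,V_1}\ge f'_{n,V_2}$ whenever $V_1\subset_a V_2$: for $m$ large the infinite block bases drawn from $V_1\cap E^{(m)}$ are, up to the equi-uniform continuity, a subcollection of those drawn from $V_2\cap E^{(m')}$, so the infima defining $f'_{n,V_1}$ are taken over an (essentially) smaller set and dominate those defining $f'_{n,V_2}$.

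With these in hand I would decompose each $f'_{n,V}$ into continuous functions on separable spaces, precisely as in Lemma~\ref{stab3}. For each triple $(n,m,p)$ with $m\le p$ define $g_{n,m,p,W}:(0,1]^m\times\Sigma_m(S_\infty\cap E_p)\to\mathbb R$ by $g_{n,m,p,W}(\lambda_1,\ldots,\lambda_m,u_1,\ldots,u_m)=f'_{n,W}(\lambda_1u_1,\ldots,\lambda_mu_m)$. Since $f'_{n,W}$ is admissible, Lemma~\ref{continuous} shows that $g_{n,m,p,W}$ is continuous, and its domain is separable, being a subset of the product of the finite-dimensional space $E_p^m$ with $(0,1]^m$. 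The monotonicity above yields $g_{n,m,p,W_1}\ge g_{n,m,p,W_2}$ whenever $W_1\subset_a W_2$, which is exactly the hypothesis of the general Stabilization Lemma for the countable family indexed by the triples $(n,m,p)$.

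Applying Lemma~\ref{stabilization} I obtain a single block subspace $V$ with $g_{n,m,p,W}=g_{n,m,p,V}$ for every triple $(n,m,p)$ and every $W\subset_a V$. Since every element of $\Sigma_{<\infty}(A_\infty)$ can be written as $(\lambda_1u_1,\ldots,\lambda_mu_m)$ with $\lambda_j=\|w_j\|_\infty\in(0,1]$ and $u_j=w_j/\|w_j\|_\infty\in S_\infty\cap E_p$ for $p$ large, the equalities of the $g$'s give $f'_{n,W}=f'_{n,V}$ on all of $\Sigma_{<\infty}(A_\infty)$ for every $n$, whenever $W\subset_a V$; in particular this holds for every $W\in\mathcal B(V)$, which is the assertion. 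The only point needing genuine care is the second paragraph: confirming that the infinite-case derived functions are admissible (so that Lemma~\ref{continuous} applies) and that the $\subset_a$-monotonicity survives the passage to infinite block bases. Both, however, reduce via the stated equi-uniform $\mathcal T_{bx}$-continuity to the corresponding finite-case facts, so the argument is genuinely a translation of Lemma~\ref{stab3}.
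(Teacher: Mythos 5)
Your argument is correct and is exactly the route the paper intends: the paper offers no separate proof here beyond the remark ``proceeding in the same manner as before,'' i.e.\ repeating the proof of Lemma~\ref{stab3} --- admissibility of the derived functions, the $\subset_a$-monotonicity, the decomposition into continuous functions $g_{n,m,p,W}$ on separable spaces via Lemma~\ref{continuous}, and the general form of the Stabilization Lemma~\ref{stabilization} --- which is precisely what you do. (Your one inessential deviation: the monotonicity $f'_{n,V_1}\ge f'_{n,V_2}$ for $V_1\subset_a V_2$ needs no appeal to equi-uniform continuity, since for block subspaces $V_1\subset_a V_2$ forces $V_1\cap E^{(m)}\subset V_2\cap E^{(m)}$ for all large $m$, so the defining infima are over smaller sets outright.)
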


We shall say that $f$ is {\it stable} if $f'_E=f'_V$ for every $V\in\mathcal B(E).$

\begin{Lem}\label{sequences}  Let $f_n:\Sigma_{\infty}(A_{\infty})\to [0,\infty)$ be a sequence of bounded uniformly $\mathcal T_{bx}$-continuous functions and suppose $f=\inf f_n$ is also $\mathcal T_{bx}-$uniformly continuous.  Assume that each $f_n$ and $f$ are stable.  Let $h:\Sigma_{<\infty}(A_{\infty})=\inf_n f'_n.$ Then for every $V\in\mathcal B(E)$ we have $h'_V \le f'.$
\end{Lem}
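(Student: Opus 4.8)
The plan is to prove the pointwise statement $h'_V(u_1,\dots,u_r)\le f'(u_1,\dots,u_r)$ for every $V\in\mathcal B(E)$ and every $(u_1,\dots,u_r)$; I abbreviate $u=(u_1,\dots,u_r)$ and recall that, by stability of each $f_n$, the far--tail infimum $(f_n)'(u,x)=f'_n(u,x)$ may be computed using continuations lying in any block subspace. The easy half is that $f'\le h$: since $f=\inf_nf_n\le f_n$ we get $f'\le f'_n$ for all $n$, hence $f'\le\inf_nf'_n=h$; the content of the lemma is that one more derivative of $h$ pushes it back below $f'$. Unwinding the finite--case derivative,
\[
h'_V(u)=\lim_{m\to\infty}\inf\big\{(f_n)'(u,x):\ x\in\Sigma_{<\infty}(V\cap E^{(m)}),\ n\in\mathbb N\big\},
\]
so it suffices to show: for each $\epsilon>0$ and each level $m$ there are $x\in\Sigma_{<\infty}(V\cap E^{(m)})$ and $n$ with $(f_n)'(u,x)<f'(u)+\epsilon$, since then $h(u,x)\le (f_n)'(u,x)$ and letting $m\to\infty$ gives $h'_V(u)\le f'(u)$. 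I establish this by contradiction, so I assume there are $u,\epsilon$ and a level $m$ with $(f_n)'(u,x)\ge f'(u)+\epsilon$ for \emph{every} $x\in\Sigma_{<\infty}(V\cap E^{(m)})$ and every $n$.

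The main step, and the one I expect to be the real obstacle, is to manufacture a single \emph{robust} infinite block basis on which $f$ cannot escape upward from its stable value. Using only that $f=\inf_nf_n$ is bounded, uniformly $\mathcal T_{bx}$--continuous and \emph{stable} (so $f'_V(u)=f'(u)$), I would construct $w\in\Sigma_\infty(V\cap E^{(m)})$ with
\[
f(u,y)\le f'(u)+\tfrac\epsilon4\qquad\text{for every sufficiently spread infinite block basis }y\prec w.
\]
Existence of such a $w$ is a far--tail stabilization: because $f'(u)=f'_V(u)=\lim_M\inf\{f(u,v):v\in\Sigma_\infty(V\cap E^{(M)})\}$, arbitrarily far out there are continuations with $f$--value $\le f'(u)$, and the point is to stitch these together into one block basis all of whose far--spread subsequences keep $f$ near $f'(u)$. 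This is precisely where stability and equi--uniform $\mathcal T_{bx}$--continuity must be used essentially, through a diagonal construction in the spirit of the strategy built in the proof of Theorem~\ref{first}. Equivalently, it amounts to controlling the far--tail infimum of $f$ after committing to a finite prefix; I regard obtaining this $w$ as the technical heart of the whole argument.

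Granting $w$, I run a gap--diagonalization against the contradiction hypothesis. Fix a finite $x\prec w$ and an index $n$, and compute $(f_n)'(u,x)$ through continuations $\prec w$ (legitimate by stability of $f_n$, taking the subspace $[w]$). Since $(f_n)'(u,x)\ge f'(u)+\epsilon$ and this is the supremum over $m'$ of the increasing infima over continuations beyond level $m'$, there is a level $M(x,n)$ beyond which \emph{every} continuation $z\prec w$ satisfies $f_n(u,x,z)>f'(u)+\tfrac\epsilon2$. I then build $y=(y_1,y_2,\dots)\prec w$ with rapidly growing gaps: having chosen the prefix $x_k=(y_1,\dots,y_k)$, I take the next block $y_{k+1}\prec w$ supported beyond $\max_{n\le k}M(x_k,n)$. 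For each fixed $n$, the entire tail of $y$ after stage $n$ then lies beyond $M(x_n,n)$, whence $f_n(u,y)=f_n(u,x_n,\text{tail})>f'(u)+\tfrac\epsilon2$. As this holds for every $n$ simultaneously, $f(u,y)=\inf_nf_n(u,y)\ge f'(u)+\tfrac\epsilon2$.

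Finally, $y\prec w$ is a far--spread block basis of $w$, so the robustness estimate gives $f(u,y)\le f'(u)+\tfrac\epsilon4$, contradicting $f(u,y)\ge f'(u)+\tfrac\epsilon2$. Hence the assumed level $m$ cannot exist, the reduction target holds for every $m$, and therefore $h'_V(u)\le f'(u)$ for every $V$ and every $u$, which is the assertion of the lemma. In summary, apart from the stabilization construction of $w$ the argument is bookkeeping: the genuine difficulty is concentrated entirely in producing the robust block basis, i.e.\ in the far--tail stabilization of $f=\inf_nf_n$.
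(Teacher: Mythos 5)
Your reduction of the statement and your gap--diagonalization over $n$ are essentially the paper's own construction, but the way you close the argument has a genuine gap: the ``robust'' block basis $w$ of your first step, with $f(u,y)\le f'(u)+\tfrac{\epsilon}{4}$ for \emph{every} sufficiently spread $y\prec w$, is not proved and does not follow from the hypotheses. Stability of $f$ only guarantees that in every block subspace there \emph{exist} far-out continuations whose $f$-value is close to $f'(u)$; upgrading this to ``\emph{all} far-spread block bases of a single $w$ are good'' is a hereditary Ramsey-type statement, and that is precisely the kind of conclusion that fails in this setting --- it is why the paper (following Gowers) works with strategic largeness rather than hereditary largeness. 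Even the strongest results here (Theorems \ref{first} and \ref{main0}) only produce a winning strategy inside a block subspace, never a block basis all of whose sub-block-bases land in the small set. So the step you flag as ``the technical heart'' is not a difficulty to be filled in later; it is the wrong target.

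The fix, which is what the paper does, is to make the diagonalization uniform and then contradict the stability of $f$ itself, rather than an upper bound along one particular $y$. Having assumed $f'_n(u_1,\ldots,u_r,v_1,\ldots,v_s)>\lambda>f'(u_1,\ldots,u_r)$ for all $n$ and all continuations in $V\cap E^{(m)}$, one builds $w_1,w_2,\ldots$ so that at stage $n$, with $W_n=[w_1,\ldots,w_n]$, the next gap $p$ is chosen to work simultaneously for \emph{all} $(v_1,\ldots,v_k)\in\Sigma_k(W_n\cap A_\infty)$ with $k\le n$ (this uses compactness of the unit sphere of $W_n$ together with the equi-uniform $\mathcal T_{bx}$-continuity of the $f_k$, in the same way as in the proof of Proposition \ref{admissible}); then $w_{n+1}$ is taken in $E^{(p)}\cap V$. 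The payoff is that \emph{every} infinite block basis $x$ of $W=[w_j]_{j=1}^{\infty}$ satisfies $f_n(u_1,\ldots,u_r,x_1,\ldots)>\lambda$ for every $n$, hence $f\ge\lambda$ on all such continuations, hence $f'_W(u_1,\ldots,u_r)\ge\lambda$; since $f$ is stable this equals $f'(u_1,\ldots,u_r)<\lambda$, a contradiction. Note that in your write-up the stability of $f$ is never used in a load-bearing way; in the correct argument it is exactly what replaces your unproven robust $w$.
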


\begin{proof}    Let $V\in\mathcal B(E).$ Let us assume $h_V'(u_1,\ldots,u_r)>\lambda>f'(u_1,\ldots,u_r)$ for some $(u_1,\ldots,u_r)\in \Sigma_{<\infty}(A_\infty \cap V).$
Then there exists $m$ so that if $(v_1,\ldots,v_s)\in \Sigma_{<\infty}(E^{(m)}\cap V)$ with $s\ge 1$
we have
$$h(u_1,\ldots,u_r,v_1,\ldots,v_s)>\lambda.$$  Thus $$f'_n(u_1,\ldots,u_r,v_1,\ldots,v_s)>\lambda,\qquad n=1,2\ldots.$$

Let us pick $w_1\in A_{\infty}\cap E^{(m)}\cap V.$  We will construct a sequence $(w_n)_{n=1}^{\infty}$ in $V$ by induction.  Suppose $(w_1,\ldots,w_n)$ have been selected and let $W_n=[w_1,\ldots,w_n].$  Then by compactness we can find $p$ so that $W_n\subset E_p$ and if $1\le k\le n$ and $(v_1,\ldots,v_k)\in \Sigma_k(W_n\cap A_{\infty})$ then
$$ f_k(u_1,\ldots,u_r,v_1,\ldots,v_k,x_1,\ldots)>\lambda$$ for all choices of $(x_j)_{j=1}^{\infty}$ in $E^{(p)}.$  Pick $w_{n+1}\in E^{(p)}\cap V.$

Now let $W=[w_j]_{j=1}^{\infty}.$  Our construction guarantees that for every $n$ it is true that
$$f_n(u_1,\ldots,u_r,x_1,\ldots)>\lambda \qquad x_j\in W,\ j=1,2,\ldots$$ Indeed we have $(x_1,\ldots,x_n)\in \Sigma_{n}([w_1,\ldots,w_m])$ where $m>n$ and so this follows from our inductive construction.
Thus $$f'(u_1,\ldots,u_r)\ge \lambda,$$ contradicting our initial hypothesis.
\end{proof}

We now use the space ${\mathbb N}^{\mathbb N}$ with the usual product topology; this can be regarded as the space of all infinite words of the natural numbers.  We will write $\mathbb N^{<\infty}=\cup_{n=0}^{\infty}\mathbb N^k$ which is the space of all finite words of the natural numbers, including the empty word.  We will use $(n_1,\ldots,n_k)$ or $(n_1,n_2 ,\ldots)$ to denote a typical member of $\mathbb N^{<\infty}$ or $\mathbb N^{\mathbb N}.$

\begin{Thm}\label{main0}  Suppose $F:{\mathbb N}^{\mathbb N}\times \Sigma_{\infty}(A_{\infty})\to [0,\infty)$ is a bounded map.  Define
$ f_{n_1,n_2,\ldots}:\Sigma_{\infty}(A_{\infty})\to [0,\infty)$ by
$$ f_{n_1,n_2,\ldots}(u_1,u_2,\ldots)= F(n_1,n_2,\ldots;u_1,u_2,\ldots).$$
Suppose
\newline (i) The maps $\{f_{n_1,n_2,\ldots}:(n_1,n_2,\ldots)\in \mathbb N^{\mathbb N}\}$are equi-uniformly $\mathcal T_{bx}-$continuous.
\newline (ii) The map $F:{\mathbb N}^{\mathbb N}\times \Sigma_{\infty}(A_{\infty})\to[0,\infty)$ is lower semi-continuous for the product topology on $\mathbb N^{\mathbb N}\times (\Sigma_{\infty}(A_{\infty}),\mathcal T_p).$

Let $$ f(u_1,u_2,\ldots)=\inf_{(n_1,n_2,\ldots)\in \mathbb N^{\mathbb N}}F(n_1,n_2,\ldots;u_1,u_2,\ldots).$$

If $f'_V(\emptyset)=0$ for every block subspace $V$, then
given $\epsilon>0$ there is a block subspace $V$ so that $\{f<\epsilon\}$ is $V$-strategically large.
\end{Thm}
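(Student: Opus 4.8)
The plan is to reduce the infimum $f=\inf_{\vec n}F(\vec n,\cdot)$ over the uncountable index set $\mathbb N^{\mathbb N}$ to a \emph{countable} family of infinite functions indexed by finite words, to use the lower semicontinuity hypothesis (ii) to recover each value $F(\vec n,u)$ as a supremum of finite-word values, and finally to produce the required $V$-strategy by fusing infinitely many applications of the finite-game machinery of Theorem~\ref{first} and Proposition~\ref{admissible}. The strategy will secretly build, alongside the visible block basis, an infinite word $\vec n$ witnessing $f(u)<\epsilon$.

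First I would, for each finite word $s=(n_1,\ldots,n_k)\in\mathbb N^{<\infty}$, set
\[ f_s(u)=\inf\{F(\vec m,u):\ \vec m\in\mathbb N^{\mathbb N},\ \vec m\ \text{extends}\ s\}. \]
Since hypothesis (i) gives the family $\{f_{\vec n}\}$ a common box-modulus, each $f_s$ is again bounded and uniformly $\mathcal T_{bx}$-continuous, with $f_\emptyset=f$ and $f_s=\inf_j f_{s\frown j}$. Hypothesis (ii) enters only through the identity
\[ F(\vec n,u)=\lim_{k\to\infty}f_{(n_1,\ldots,n_k)}(u)=\sup_k f_{(n_1,\ldots,n_k)}(u), \]
valid for every $u$ and every $\vec n$: the sequence is nondecreasing in $k$ (longer words give larger infima), the inequality $\ge$ is trivial, and $\le$ is exactly lower semicontinuity of $F(\cdot,u)$ at $\vec n$, using that the words agreeing with $\vec n$ on longer and longer initial segments form a neighbourhood base of $\vec n$ in $\mathbb N^{\mathbb N}$. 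Hence, to force $f(u)<\epsilon$ it suffices to produce one $\vec n$ with $f_{(n_1,\ldots,n_k)}(u)\le\epsilon'$ for \emph{all} $k$, where $\epsilon'<\epsilon$ is fixed in advance.

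Next I would stabilize. Applying the stabilization proposition for infinite functions proved above to the countable family $\{f_s\}_{s\in\mathbb N^{<\infty}}$, and Lemma~\ref{stab3} to the admissible finite derivatives $\phi_s:=f_s'$ and to the finite functions $h_s:=\inf_j\phi_{s\frown j}$, I pass to a block subspace $V_0$ on which every $f_s$ is stable; thus $\phi_s=(f_s)'_W$ is independent of $W\in\mathcal B(V_0)$, and $\phi_\emptyset(\emptyset)=f'(\emptyset)=0$ by hypothesis. The decisive structural input is Lemma~\ref{sequences} applied to $\{f_{s\frown j}\}_j$, whose infimum is $f_s$: it gives $(h_s)'_W\le\phi_s$ for every $W$, so that whenever $\phi_s(u_1,\ldots,u_r)$ is small one can, after a short block extension placed inside any prescribed subspace, find a coordinate $j$ making $\phi_{s\frown j}$ small on the extended tuple. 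This is precisely what lets the strategy advance the word by one coordinate $n_{r+1}=j$ while keeping the derivative controlled. Crucially, \emph{stability} means the infima defining $\phi_s$ take the same value in every $W\subset V_0$, so the near-optimal tail vectors the strategy wants can always be found inside whatever $V_{r+1}\subset V$ the opponent presents.

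Finally I would construct the strategy by a fusion. I maintain a visible block basis $(u_1,\ldots,u_r)$, an internal word $s$, and the invariant that $\phi_s$ of the current tuple stays below a summable threshold tending to $\epsilon'$. Over a block of opponent moves I use $(h_s)'\le\phi_s$ to choose the next coordinate $j$ (the word advancing intermittently, once per such block), and I use Proposition~\ref{admissible} applied to the stable admissible function $\phi_{s\frown j}$ to pick each $u_{r+1}\in V_{r+1}$ keeping either the value or the derivative below the updated threshold; exactly as in Theorem~\ref{first}, the auxiliary function $f+2(\epsilon-2g)_+$ is run word-by-word to guarantee the derivative cannot increase forever, forcing genuinely small values to be realized. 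Each $u_{r+1}$ is taken far out, in $E^{(m_r)}$ with $m_r\to\infty$, and chosen near-optimally for the most pressing earlier requirement, so that by diagonalization the single infinite sequence it produces realizes, for every fixed $k$, the infimum defining $\phi_{(n_1,\ldots,n_k)}$; uniform $\mathcal T_{bx}$-continuity then yields $f_{(n_1,\ldots,n_k)}(u)\le\epsilon'$ for all $k$, whence $f(u)=\sup_k f_{(n_1,\ldots,n_k)}(u)\le\epsilon'<\epsilon$. I expect the fusion to be the main obstacle: a \emph{single} infinite block basis, built one vector at a time against an arbitrary opponent, must simultaneously witness the smallness of $f_{(n_1,\ldots,n_k)}$ for \emph{every} prefix length $k$ — one must pass from finite-stage control of the derivatives to the actual infinite values, uniformly in the growing word — while only ever selecting vectors from the subspaces supplied. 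Reconciling these diagonal optimality demands with the opponent's freedom, handled as in Theorem~\ref{first2} by arranging $V\subset V_n+F_n$ for finite-dimensional $F_n$, and with the word-advancement step, is where the real work lies.
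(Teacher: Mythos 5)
Your architecture is the paper's: the finite-word functions $f_s$, their stabilization, the auxiliary finite functions $h_s=\inf_j f'_{s\frown j}$, the inequality $h'_s\le f'_s$ from Lemma \ref{sequences}, and a master strategy that advances a hidden word while running the finite-game strategies of Theorem \ref{first2} for the $h_s$. All of that is right. The gap is in your last step, and it is exactly the step you flag as ``where the real work lies'': you assert that, by choosing each $u_{r+1}$ ``near-optimally for the most pressing earlier requirement,'' the single sequence $u$ actually produced against the opponent realizes, for each $k$, the infimum defining $f'_{(n_1,\ldots,n_k)}$, so that $f_{(n_1,\ldots,n_k)}(u)\le\epsilon'$ for all $k$. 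This cannot be arranged. What the strategy actually secures at stage $r_k$ is $f'_{(n_1,\ldots,n_k)}(u_1,\ldots,u_{r_k})<\epsilon'$, i.e.\ the existence of \emph{some} infinite tail, supported far out and (by stability) available inside any single prescribed block subspace, making $f_{(n_1,\ldots,n_k)}$ small. But the tail the game goes on to produce must thread through the opponent's future subspaces $V_{r_k+1},V_{r_k+2},\ldots$, which are not known in advance; stability supplies good tails inside one fixed $W$, not tails whose $j$th vector lies in an arbitrary prescribed $V_{r_k+j}$. There is no mechanism (and Proposition \ref{admissible} offers none for \emph{infinite} functions, whose values are undefined at finite stages) to force the actual play to be a near-minimizer; only the derivatives are controlled along the play, and your requirement ``$f_{(n_1,\ldots,n_k)}(u)\le\epsilon'$ for all $k$'' is, by the monotonicity you note, equivalent to $F(\vec n,u)\le\epsilon'$, i.e.\ to the conclusion itself.

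The paper closes this gap differently, and this is where hypothesis (ii) is really used. From $f'_{(n_1,\ldots,n_j)}(u_1,\ldots,u_{k_j})<\epsilon'$ one extracts, for each $j$, a witnessing pair $(\vec n_j,\vec u_j)$ with $F(\vec n_j,\vec u_j)<\epsilon'$ which agrees with the produced $(\vec n,\vec u)$ only on the first $j$ (respectively $k_j$) coordinates; these pairs converge to $(\vec n,\vec u)$ in the product topology on $\mathbb N^{\mathbb N}\times(\Sigma_{\infty},\mathcal T_p)$, and \emph{joint} lower semi-continuity of $F$ then gives $F(\vec n,\vec u)\le\epsilon'<\epsilon$, hence $f(u)<\epsilon$. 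Note that your plan invokes (ii) only through lower semi-continuity in the word variable at fixed $u$ (to get $F(\vec n,u)=\sup_k f_{(n_1,\ldots,n_k)}(u)$); the fact that the hypothesis is joint semi-continuity in both variables is precisely the signal that the $u$-coordinates of the witnesses must be allowed to differ from the actual play beyond a finite stage. Replacing your final diagonalization claim with this extraction-plus-semicontinuity argument completes the proof.
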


\begin{proof}
For each $(n_1,\ldots,n_k)\in \mathbb N^{<\infty}$ we define $$f_{n_1,n_2,\ldots,n_k}(u_1,u_2,\ldots)=\inf_{m_1,m_2\ldots} F(n_1,\ldots,n_k,m_1,m_2,\ldots;u_1,u_2,\ldots).$$
The family $f_{n_1,n_2,\ldots,n_k}$ is $\mathcal T_{bx}$-equi-uniformly continuous.
By passing to a block subspace we can assume that each $f_{n_1,n_2,\ldots,n_k}$ is stable.  Of course the family $f'_{n_1,\ldots,n_k}$ is also  $\mathcal T_{bx}$-equi-uniformly continuous.
Let
$$ h_{n_1,\ldots,n_k}(u_1,\ldots,u_r)=\inf_{m\in\mathbb N}f'_{n_1,\ldots,n_k,m}(u_1,\ldots,u_r) \qquad (u_1,\ldots,u_r)\in\Sigma_{<\infty}(A_{\infty}).$$
This family is also equi-uniformly $\mathcal T_{bx}$-continuous.  Passing to a further block subspace we can suppose that this family is also stable.

By  Lemma \ref{sequences} we have that
$h'_{n_1,\ldots,n_k}\le f'_{n_1,\ldots,n_k}.$

Let us choose a sequence $(\epsilon_r)_{r=0}^{\infty}$ with $\sum\epsilon_r=\epsilon'<\epsilon.$
Again exploiting the countability of the family $h_{n_1,\ldots,n_k}$ we can pass to a further block subspace and, by relabelling as $E$, suppose that for each $(n_1,\ldots,n_k)\in \mathbb N^{<\infty}$ and $(u_1,\ldots,u_r)\in\Sigma_{<\infty}(A_{\infty})$  there is a strategy $\Phi_{n_1,\ldots,n_k,u_1,\ldots,u_r}$ with the property that
if $(V_j)_{j=1}^{\infty}$ is any sequence of subspaces then
for some $p\ge 1,$
$$ h_{n_1,\ldots,n_k}\Phi_{n_1,\ldots,n_k,u_1,\ldots,u_r}(u_1,\ldots,u_r,V_1,\ldots,V_p)<h'_{n_1,\ldots,n_k}(u_1,\ldots,u_r)+\epsilon_{k}.$$

We will now define a strategy $\Psi$.  To do this we first define  maps $\theta:\Sigma_{<\infty}(A_{\infty})\to \mathbb N^{<\infty}$ and $\varphi:\Sigma_{<\infty}(A_{\infty})\to \mathbb N$ such that $\varphi(u_1,\ldots,u_r)\le r.$
This is done inductively on the length of $(u_1,\ldots,u_r).$  We define
$\theta(\emptyset)=\emptyset$ and $\varphi(\emptyset)=0.$
Suppose that $\theta$ and $\varphi$ have been defined for all ranks up to $r$ and consider $(u_1,\ldots,u_{r+1})$.

 Let $\theta(u_1,\ldots,u_r)=(n_1,\ldots,n_k)$ and $\varphi(u_1,\ldots,u_r)=s.$ If $$h_{n_1,\ldots,n_k}(u_1,\ldots,u_{r+1})<h'_{n_1,\ldots,n_k}(u_1,\ldots,u_s)+\epsilon_k$$
then we can choose $m\in\mathbb N$ so that
$$ f'_{n_1,\ldots,n_k, m}(u_1,\ldots,u_{r+1})< h'_{n_1,\ldots,n_k}(u_1,\ldots,u_s)+\epsilon_k.$$  Let
$\theta(u_1,\ldots,u_{r+1})=(n_1,n_2,\ldots,n_k,m)$ and $\varphi(u_1,\ldots,u_{r+1})=r+1.$

Otherwise we simply put $\theta(u_1,\ldots,u_{r+1})=(n_1,\ldots,n_k)$ and $\varphi(u_1,\ldots,u_{r+1})=s.$

To define $\Psi$ we set
$$ \Psi(u_1,\ldots,u_r,V)=\Phi_{n_1,\ldots,n_k,u_1,\ldots,u_s}(u_1,\ldots,u_r,V)$$ where $(n_1,\ldots,n_k)=\theta(u_1,\ldots,u_r)$ and $s=\varphi(u_1,\ldots,u_r).$

Finally, we must show that if $(V_j)_{j=1}^{\infty}$ is any sequence of subspaces the sequence
$(u_1,u_2,\ldots)=\Psi(\emptyset,V_1,V_2,\ldots)$ is in $\{f<\epsilon\}.$

Let $k(r)$ be the length of $\theta(u_1,\ldots,u_r).$  Then $k(r)\le r$ for all $r.$
Suppose $k(r)$ remains bounded.  Then there exists $s$ so that $\varphi(u_1,\ldots,u_r)=s$ for all $r\ge s$ and $\theta(u_1,\ldots,u_k)=(n_1,\ldots,n_t)$ for some fixed $(n_1,n_2,\ldots,n_t)\in N^{<\infty}$ when $k\ge s.$
Thus $$(u_1,\ldots,u_r)=\Phi_{n_1,\ldots,n_t,u_1,\ldots,u_s}(u_1,\ldots,u_s,V_{s+1},\ldots,V_r) \qquad r\ge s.$$
It follows that for some $r>s$ we have
$$ h_{n_1,\ldots,n_t}(u_1,\ldots,u_r) < h'_{n_1,\ldots,n_t}(u_1,\ldots,u_s)+\epsilon_t$$ which implies that
$ \varphi(r)=r$ which is a contradiction.  Hence $k(r)\uparrow\infty.$
Let $r_j$ be the first natural number at which $k(r)=j.$
Then there exists $(n_1,n_2,\ldots)\in\mathbb N^{\mathbb N}$  so that
$$ \theta(u_1,\ldots,u_{r_j})=(n_1,\ldots,n_j).$$
By construction
$$ f_{n_1}'(u_1,\ldots,u_{r_1}) < h'(\emptyset)+\epsilon_0$$ and then for $j\ge 1$,
$$ f'_{n_1,\ldots,n_{j+1}}(u_1,\ldots,u_{r_{j+1}})< h'_{n_1,\ldots,n_j}(u_1,\ldots,u_{r_j})+\epsilon_j.$$
Since $h'_{n_1,\ldots,n_j}\le f'_{n_1,\ldots,n_j}$ we conclude that
$$ f'_{n_1,\ldots,n_j}(u_1,\ldots,u_{k_j})< \epsilon' $$ for all $j.$    But this implies the existence of $(n_{j,1},n_{j,2}\ldots)\in\mathbb N^{\mathbb N}$ and $(u_{j,1},u_{j,2},\ldots)\in\Sigma_{\infty}(A_{\infty})$ so that $n_{j,i}=n_i$ for $i\le j$ and $u_{j,i}=u_i$ for $i\le k_j$ and
$$ F(n_{j,1},n_{j,2},\ldots;u_{j,1},u_{j,2},\ldots)<\epsilon' \qquad j=1,2,\ldots.$$
Finally we invoke lower semi-continuity:
$$ F(n_1,n_2,\ldots,;u_1,u_2,\ldots)<\epsilon$$ and so $f(u_1,u_2,\ldots,)<\epsilon.$
\end{proof}

We now recall (Lemma \ref{Polish}) that $\Sigma_{\infty}(E)$ is a Polish space for the topology $\mathcal T_p.$  Thus every Borel set is analytic (i.e. a continuous image of $\mathbb N^{\mathbb N})$.

\begin{Thm}\label{main}  Let $\sigma$ be a large subset of $\Sigma_{\infty}(E).$  Suppose:
\newline (a) There is a sequence of  absolutely convex sets $C_n$ such that $C_n\cap F$ is compact for all finite-dimensional subspaces $F$ and $\sigma\subset \prod_{n=1}^{\infty}C_n.$
\newline (b) $\sigma$ is analytic as a subset of $(\Sigma_{\infty}(E),\mathcal T_p).$

Let $\rho_n$ be any sequence of $F$-norms on $E$ and define for $u=(u_1,u_2,\ldots),\ v=(v_1,v_2,\ldots)\in\Sigma_{\infty}(E)$
$$ d(u,v)=\sum_{j=1}^{\infty}\rho_j(u_j-v_j).$$

Let $\sigma_{\epsilon}=\{u=(u_j)_{j=0}^{\infty}:\ d(u,\sigma)=\inf_{v\in\sigma}d(u,v)<\epsilon\}.$  Then for every $\epsilon>0$ there is a block subspace $V$ so that $\sigma_{\epsilon}$ is strategically large for $V$.
\end{Thm}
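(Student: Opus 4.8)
The plan is to deduce Theorem~\ref{main} from Theorem~\ref{main0} by writing the $d$-distance to $\sigma$ as the infimum of a family $F$ indexed by $\mathbb N^{\mathbb N}$. First I would invoke Lemma~\ref{Polish}: since $(\Sigma_{\infty}(E),\mathcal T_p)$ is Polish and $\sigma$ is nonempty (being large) and analytic by (b), there is a continuous surjection $\psi:\mathbb N^{\mathbb N}\to(\sigma,\mathcal T_p)$. Writing $\psi(\mathbf n)=(\psi_1(\mathbf n),\psi_2(\mathbf n),\ldots)$, I would then set, for $u=(u_j)\in\Sigma_{\infty}(A_{\infty})$,
$$ F(\mathbf n;u)=\min\Big(\sum_{j=1}^{\infty}\rho_j\big(u_j-\psi_j(\mathbf n)\big),\ \epsilon\Big),\qquad f(u)=\inf_{\mathbf n}F(\mathbf n;u)=\min\big(d(u,\sigma),\epsilon\big). $$
Then $F$ is bounded by $\epsilon$, and $\{f<\epsilon\}=\{u\in\Sigma_{\infty}(A_{\infty}):d(u,\sigma)<\epsilon\}\subseteq\sigma_{\epsilon}$. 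Hence it suffices to find a block subspace $V$ for which $\{f<\epsilon\}$ is $V$-strategically large, since then the same strategy (whose plays lie in $A_\infty$, as forced by Proposition~\ref{admissible}) witnesses that $\sigma_{\epsilon}$ is strategically large for $V$.

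To apply Theorem~\ref{main0} I must verify its two continuity hypotheses for $F$. For (i), note that any $F$-norm $\rho_j$ induces a vector topology coarser than the finest topology $\mathcal T$, so $\rho_j$ is $\mathcal T$-continuous and $U_j=\{x:\rho_j(x)<\delta 2^{-j}\}$ is a $\mathcal T$-neighborhood of $0$. The triangle inequality for each $\rho_j$, together with the $1$-Lipschitz truncation $\min(\cdot,\epsilon)$, gives $|F(\mathbf n;u)-F(\mathbf n;v)|\le\sum_j\rho_j(u_j-v_j)<\delta$ whenever $u_j-v_j\in U_j$ for all $j$, uniformly in $\mathbf n$; this is exactly equi-uniform $\mathcal T_{bx}$-continuity. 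For (ii), the coordinate projections $\pi_j:(\Sigma_{\infty}(E),\mathcal T_p)\to(E,\mathcal T)$ are continuous by Lemma~\ref{Polish}, so both $u\mapsto u_j$ and $\mathbf n\mapsto\psi_j(\mathbf n)$ are continuous into $(E,\mathcal T)$; composing with the $\mathcal T$-continuous $\rho_j$ shows each summand $(\mathbf n,u)\mapsto\rho_j(u_j-\psi_j(\mathbf n))$ is jointly continuous, and Fatou's lemma applied to the increasing partial sums yields lower semicontinuity of the sum, hence of $F$.

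The remaining and decisive hypothesis is $f'_V(\emptyset)=0$ for every block subspace $V$. Fix $V$ and $m$; since $V\cap E^{(m)}\in\mathcal B(E)$ and $\sigma$ is large, there is $w\in\sigma\cap\Sigma_{\infty}(V\cap E^{(m)})$, so $d(w,\sigma)=0$. If $w\in\Sigma_{\infty}(A_{\infty})$ this already forces $f(w)=0$ and hence $f'_V(\emptyset)=0$; the subtlety is that the members of $\sigma$ need \emph{not} be normalized, while $f$ is finite only on $A_{\infty}$. This is precisely where hypothesis (a) is used: each block $w_j$ lies in the absolutely convex set $C_j$ whose finite-dimensional sections are compact. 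I would exploit absolute convexity to pass to normalized block bases in $V\cap E^{(m)}$ and use the compactness of the sections $C_j\cap F$ to control the resulting $\rho_j$-perturbations, producing, for every $\eta>0$ and all large $m$, a normalized $v\in\Sigma_{\infty}(A_{\infty}\cap V\cap E^{(m)})$ with $d(v,\sigma)<\eta$, and thus $f'_V(\emptyset)=0$.

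I expect this last step to be the main obstacle: matching a normalized (necessarily $A_{\infty}$-valued) block basis to a possibly large-norm element of $\sigma$ cannot be done by naive rescaling, since dividing $w_j$ by $\|w_j\|_{\infty}$ may move it arbitrarily far in $\rho_j$. The correct argument must use the compactness and absolute convexity of the $C_j$ (perhaps after a preliminary reduction, by rescaling the $\rho_j$ or the Hamel basis, to a situation in which the relevant portions of $\sigma$ are normalized) to keep the approximation under control. Once $f'_V(\emptyset)=0$ is secured, Theorem~\ref{main0} furnishes a block subspace $V$ for which $\{f<\epsilon\}$ is $V$-strategically large, and the inclusion $\{f<\epsilon\}\subseteq\sigma_{\epsilon}$ completes the proof.
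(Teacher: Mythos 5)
Your overall route is the same as the paper's: parametrize the analytic set $\sigma$ by a continuous surjection from $\mathbb N^{\mathbb N}$, form the truncated distance function $F$, check equi-uniform $\mathcal T_{bx}$-continuity and lower semicontinuity, and feed the result to Theorem~\ref{main0}. Those verifications are fine. But the step you yourself flag as ``the main obstacle'' --- securing $f'_V(\emptyset)=0$ when the members of $\sigma$ need not lie in $\Sigma_{\infty}(A_{\infty})$ --- is a genuine gap, and the per-subspace repair you sketch (normalizing a witness $w\in\sigma\cap\Sigma_{\infty}(V\cap E^{(m)})$ and using compactness of the sections $C_j\cap F$ to control the $\rho_j$-perturbation) is not the right mechanism: as you observe, rescaling $w_j$ to $\|\cdot\|_{\infty}$-norm at most $1$ can move it arbitrarily far in $\rho_j$, and nothing in hypothesis (a) bounds that displacement pointwise. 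The truncated-distance function $f$ is simply not small at a normalization of $w$, so largeness of $\sigma$ alone does not give $f'_V(\emptyset)=0$ without a prior global reduction.

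The paper performs exactly the global reduction you mention only parenthetically, and it is the substantive first half of the proof. Since $\sigma$ is large and $\sigma\subset\prod_n C_n$, the linear span of each $C_n$ must have finite codimension; enlarging $C_n$ by a compact absolutely convex neighborhood of $0$ in a complement makes it absorbent, so its gauge is a norm $\|\cdot\|_n$ (a norm, not merely a seminorm, because the finite-dimensional sections are compact). Choosing weights $\delta_n$ so that $\|x\|=\sum_n\delta_n\|x\|_n<\infty$ yields a single norm with $C_n\subset\{x:\|x\|\le M_n\}$. One then passes to a block subspace whose block basis is a basic sequence in the completion of $(E,\|\cdot\|)$ (relabelling it as $E$; $\sigma\cap\Sigma_{\infty}(V)$ stays analytic because $\Sigma_{\infty}(V)$ is $\mathcal T_p$-closed, and stays large), whence $\|\cdot\|_{\infty}$ is dominated by a multiple of $\|\cdot\|$ and so $C_n\subset\{x:\|x\|_{\infty}\le M_n'\}$. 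Finally the coordinatewise rescaling $\sigma'=\{(u_j):(M_j'u_j)\in\sigma\}$, with $\rho_j$ replaced by $\rho_j'(x)=\rho_j(M_j'x)$, places $\sigma'$ inside $\Sigma_{\infty}(A_{\infty})$. Only after this reduction does your argument for $f'_V(\emptyset)=0$ go through verbatim: a witness of largeness in $\Sigma_{\infty}(V\cap E^{(m)})$ now automatically lies in $\Sigma_{\infty}(A_{\infty})$ and satisfies $f(w)=0$. With that paragraph supplied, your proof matches the paper's.
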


\begin{proof}

We start by reducing this to the case when $C_n=\{x:\|x\|_{\infty}\le 1\}.$
To do this first observe that each $C_n$ is $\mathcal T-$closed.  Since $\sigma$ is large the linear space generated by $C_n$ is of finite codimension; if $E_n$ is a complementary space we can replace $C_n$ by the bigger set $C_n+K_n$ where $K_n$ is a compact absolutely convex neighborhood of the origin in $E_n.$  So we can suppose $C_n$ is absorbent and hence generates a norm $\|\cdot\|_n$ on $E$.  By induction, we can find a sequence of positive numbers $\delta_n$ so that $\|x\|=\sum_{n=1}^{\infty}\delta_n\|x\|_n<\infty$ for all $x\in E.$  Thus we can assume that each $C_n=\{x:\|x\|\le M_n\}$ for a single norm $\|\cdot\|$.

We can now pass to block basis which is a normalized  basic sequence in the completion of $(E,\|\cdot\|).$  Intersecting $\sigma$ with $\Sigma_{\infty}(V)$ for a block subspace gives again an analytic set since $\Sigma_{\infty}(V)$ is closed; thus we can relabel so that the block subspace is already $E$.   It now follows that each $C_n$ is included in a set $\{x:\|x\|_{\infty}\le M'_n\}$ where $M'_n$ is some sequence of positive numbers.  Finally we put
$\sigma'=\{(u_1,u_2,\ldots): (M'_1u_1,M'_2u_2,\ldots)\in\sigma\}$ and note that $\sigma'\subset \Sigma_{\infty}(A_{\infty}).$  Clearly it is enough to prove the result for $\sigma'$
with $\rho_j$ replaced by $\rho'_j(x)=\rho_j(M'_jx).$

We therefore assume that $\sigma\subset \Sigma_{\infty}(A_{\infty}).$

Now there is a continuous surjective map  $g:\mathbb N^{\mathbb N}\to \sigma$
for the $\mathcal T_p-$topology.  We will define
$$ F:\mathbb N^{\mathbb N}\times \Sigma_{\infty}(A_{\infty}) \to [0,\infty)$$ by
$$ F(n_1,n_2,\ldots; u_1,u_2,\ldots)=\min \Big(1, d((u_1,u_2,\ldots),g(n_1,n_2,\ldots))\Big).$$
It is clear that the family $f_{n_1,n_2,\ldots,}$ given by
$$ f_{n_1,n_2,\ldots}(u_1,u_2,\ldots)=F(n_1,n_2,\ldots; u_1,u_2,\ldots)$$ is equi-uniformly $\mathcal T_{bx}$-continuous.
It is also clear that $F$ is lower semi-continuous for the $\mathcal T_p$-topology in the second factor.

The result now follows directly from Theorem \ref{main0}.\end{proof}

\section{Applications to F-spaces}\label{applications}

We now apply these results to obtain the Gowers dichotomy for F-spaces.  Before doing this we make some remarks on  basic sequences in F-spaces.  There is an F-space (indeed a quasi-Banach space) which contains no basic sequence \cite{Kalton1995a}. It turns out that there is a dichotomy result for the existence of basic sequences with a very similar flavor to that of the Gowers dichotomy, which has been known for some time.

We will need some background (see \cite{KaltonPeckRoberts1984}).  Let $X$ be an F-space and let $\rho$ be an F-norm inducing the topology.  A basic sequence $(x_n)_{n=1}^{\infty}$ is called {\it regular} if $\inf_n\rho(x_n)>0.$  We denote by $\omega$ the space of all sequences (i.e. the countable product of lines).  The canonical basis of $\omega$ is not regular, and $\omega$ contains no regular basic sequence.  The following Lemma is elementary.

\begin{Prop}\label{regular} Suppose $X$ contains no subspace isomorphic to $\omega.$  Then given a basic sequence $(x_n)_{n=1}^{\infty}$ we may choose $a_n>0$ so that $(a_nx_n)_{n=1}^{\infty}$ is regular.\end{Prop}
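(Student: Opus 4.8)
The plan is to characterize exactly when a basic sequence can be rescaled to be regular, and then to show that failure of this is possible only when $X$ contains a copy of $\omega$. For each $n$ I would set $s_n=\sup_{t>0}\rho(t x_n)$, the ``size of the scalar orbit'' of $x_n$. Since $\rho(x_n)>0$ we have $s_n\ge\rho(x_n)>0$, and since $t\mapsto\rho(tx_n)$ is continuous, whenever $c<s_n$ there is $a>0$ with $\rho(ax_n)>c$. Hence if $\inf_n s_n=:c_0>0$, one may pick $a_n>0$ with $\rho(a_nx_n)>c_0/2$, so that $\inf_n\rho(a_nx_n)\ge c_0/2>0$ and $(a_nx_n)$ is regular. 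Thus it suffices to prove that the hypothesis ``$X$ contains no subspace isomorphic to $\omega$'' forces $\inf_n s_n>0$.

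Suppose instead that $\inf_n s_n=0$. Passing to a subsequence $(y_k):=(x_{n_k})$, which is again a basic sequence, I would arrange $s_k<2^{-k}$, so that $\sum_k s_k<\infty$. The decisive observation is that $\rho(cy_k)\le s_k$ for \emph{every} scalar $c$, using $\rho(cy_k)=\rho(|c|y_k)$. Consequently, for any $(c_k)\in\omega$ the partial sums of $\sum_k c_ky_k$ are Cauchy, since $\rho\big(\sum_{k=m}^{n} c_ky_k\big)\le\sum_{k=m}^{n}\rho(c_ky_k)\le\sum_{k=m}^{n} s_k\to0$; completeness of $X$ yields a convergent series, and I define a linear map $S:\omega\to X$ by $S((c_k))=\sum_k c_ky_k$.

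I would then verify that $S$ is a topological isomorphism onto $\overline{[y_k]}$. Continuity of $S$ follows from the estimate $\rho(S(c^{(j)})-S(c))\le\sum_k\rho((c^{(j)}_k-c_k)y_k)$: each summand is dominated by the summable $s_k$ and, for fixed $k$, tends to $0$ as $j\to\infty$ by continuity of scalar multiplication, so a dominated-convergence argument gives $S(c^{(j)})\to S(c)$ whenever $c^{(j)}\to c$ coordinatewise. Injectivity is immediate from uniqueness of basic expansions. Because $(y_k)$ is basic, its coordinate functionals $y_k^*$ are continuous on $\overline{[y_k]}$, and $z\mapsto(y_k^*(z))_k$ is precisely $S^{-1}$ on the range; being coordinatewise continuous it is $\mathcal T_p$-continuous, so $S$ is a homeomorphism onto $S(\omega)$. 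Since $\omega$ is complete, $S(\omega)$ is then closed in $X$; it contains all finite combinations of the $y_k$ and is therefore dense in $\overline{[y_k]}$, whence $S(\omega)=\overline{[y_k]}\cong\omega$. This contradicts the hypothesis, so $\inf_n s_n>0$ and the rescaling above finishes the proof.

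The routine parts are the $F$-norm manipulations and the standard continuity of the coordinate functionals of a basic sequence, for which I would lean on the background in \cite{KaltonPeckRoberts1984}. The one genuinely non-Banach point, and the step I expect to be the crux, is that the bound $\rho(cy_k)\le s_k$ (a bounded scalar orbit, impossible in a normed space) is exactly what makes $\sum_k c_ky_k$ converge for \emph{every} $\omega$-sequence of coefficients and what produces the surjection onto $\overline{[y_k]}$; getting the topology right so that $S^{-1}$ is continuous and $S(\omega)$ is \emph{all} of $\overline{[y_k]}$, rather than merely a dense subspace, is where care is needed.
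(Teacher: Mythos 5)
Your proof is correct and takes essentially the same route as the paper's: the paper likewise notes that if no rescaling works then $\inf_n\sup_{t}\rho(tx_n)=0$ and asserts that some subsequence is then equivalent to the canonical basis of $\omega$. You have simply supplied the details of that last assertion (summability of the orbit sizes, convergence of $\sum c_ky_k$ for arbitrary coefficients, and continuity of the coordinate functionals giving the inverse), which the paper leaves as a one-line claim.
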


\begin{proof} Indeed if not we have $\inf_{n\in\mathbb N}\sup_{t\in\mathbb R}\rho(te_n)=0.$  Then some subsequence of $(e_n)_{n=1}^{\infty}$ is equivalent to the canonical basis of $\omega.$ \end{proof}

Two subspaces $Y$, $Z$ of an F-space $X$ are called separated if $Y \cap Z = \{ 0\}$ and the canonical projection $Y+Z \to Y$ is continuous. An F-space is called HI if no
two infinite dimensional subspaces are separated.

\begin{Prop}\label{regularHI} Suppose $X$ has a regular basis $(e_n)_{n=1}^{\infty}$.  If there exist two separated  infinite-dimensional subspaces $Y,Z$ of $X$ then there exist two separated block subspaces of $X$.\end{Prop}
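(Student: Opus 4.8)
I would manufacture the two block subspaces by a simultaneous gliding-hump construction, approximating vectors of $Y$ and of $Z$ by interleaved blocks, and then transfer the separation of $Y,Z$ across the approximation. Recall that $Y,Z$ (and $Y+Z$) are closed and that the partial-sum, hence interval, projections of the basis $(e_n)$ are equicontinuous. Since for each $p$ the projection onto $[e_1,\ldots,e_p]$ restricted to $Y$ has finite-dimensional range, its kernel in $Y$ is infinite-dimensional, so at every stage I can pick a nonzero $y\in Y$ whose head below level $p$ vanishes; after rescaling so that its largest coordinate equals $1$ and truncating at a high enough level $q$ (the tail being $\rho$-small because the expansion of the fixed vector $y$ converges), I obtain a block $u_k$ supported on an interval $(p,q]$ together with $y_k\in Y$ whose difference $d_k=y_k-u_k$ has $\rho(d_k)$ as small as desired. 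Doing the same alternately inside $Z$ yields $w_k,z_k$ with the supports of $u_1,w_1,u_2,w_2,\dots$ forming a genuine interleaved block basis, so that $[u_k]$ and $[w_k]$ are honest infinite-dimensional block subspaces.

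Arranging $\sum_k\rho(d_k)<\epsilon$, I would define perturbation operators $T\colon\overline{[u_k]}\to Y$ and $S\colon\overline{[w_k]}\to Z$ by $Tu_k=y_k$, $Sw_k=z_k$ (extended by continuity): the series $Tu=u+\sum_k a_k d_k$ converges, and $Tu$ is a limit of finite combinations of the $y_k$, hence lies in $Y$ (closed). The crucial point is that $\rho(Tu-u)$ and $\rho(Sv-v)$ are small \emph{uniformly} for $u,v$ in a fixed neighbourhood of the origin, and this is where regularity enters. With $c=\inf_n\rho(e_n)>0$, equicontinuity of the single-coordinate projections provides a neighbourhood on which every coordinate $b_n$ of a vector satisfies $\rho(b_ne_n)<c$, and the F-norm axiom $\rho(\lambda x)\le\rho(x)$ for $|\lambda|\le1$ then forces $|b_n|<1$. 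As each $u_k$ was normalised to have largest coordinate $1$, the block coefficient $a_k$ of such a $u$ equals the corresponding coordinate $b_{n_k^*}$, so $|a_k|\le1$; the same axiom gives $\rho(a_kd_k)\le\rho(d_k)$, whence $\rho(Tu-u)\le\sum_k\rho(d_k)<\epsilon$ on that neighbourhood. Applying the lower bound coming from equicontinuity to the leading coefficient also shows each $(u_k)$ is itself regular, so that $\overline{[u_k]}$ contains no copy of $\omega$ (cf.\ Proposition~\ref{regular}) and every nonzero vector of it can be rescaled to a prescribed F-norm.

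Finally I would transfer separation. Trivial intersection is immediate: any $x\in\overline{[u_k]}\cap\overline{[w_k]}$ may be scaled into the good neighbourhood, and the estimate applied to $u=tx$, $v=-tx$ (so $u+v=0$) forces $\rho(tx)$ to be arbitrarily small, hence $x=0$. For continuity of the projection I argue by contradiction: a failure gives $u_n\in[u_k]$, $v_n\in[w_k]$ with $\rho(u_n+v_n)\to0$ but $\rho(u_n)\not\to0$; using regularity of $(u_k)$ I rescale each pair so that the $u_n$ sit at a fixed scale inside the good neighbourhood. Then $Tu_n\in Y$, $Sv_n\in Z$ with $\rho(Tu_n+Sv_n)\le\rho(u_n+v_n)+2\epsilon$, so continuity of the projection $Y+Z\to Y$ bounds $\rho(Tu_n)$ and hence $\rho(u_n)\le\rho(Tu_n)+\epsilon$, which is the contradiction once $\epsilon$ is small against the separation modulus of $(Y,Z)$ at that scale. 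The main obstacle throughout is the non-homogeneity of the F-norm: one cannot freely normalise as in the Banach case, and $T-I$ cannot be made small in a naive operator-norm sense at all scales. Regularity of the basis is precisely the device that restores control — bounding the coordinates of norm-bounded vectors, forcing the constructed block bases to be regular, and excluding the $\omega$-collapse so that the witness sequence can be renormalised to a fixed scale; pinning down this renormalisation uniformly in $\epsilon$ is the delicate step that has to be executed with care.
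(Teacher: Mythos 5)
Your construction --- interleaved gliding hump, perturbation operators sending blocks back into $Y$ and $Z$, and coordinate control on a neighbourhood obtained from regularity --- is essentially the one in the paper, but the last step, transferring the separation, is where your argument has a genuine gap, and you have in effect flagged it yourself. Two specific problems. First, the trivial-intersection argument as stated does not work: for a fixed construction your estimates only yield $\rho(tx)\le\eta(2\epsilon)+\epsilon$, where $\eta$ is the modulus of continuity of the projection $Y+Z\to Y$ at the origin --- a fixed positive bound, not an ``arbitrarily small'' one --- so nothing forces $x=0$. (The conclusion is true, but for the trivial reason that your two block subspaces are disjointly supported with respect to $(e_n)_{n=1}^{\infty}$ and the coordinate functionals are continuous.) Second, and more seriously, the ``renormalisation to a fixed scale'' is precisely the step you do not execute, and it is not routine: $\rho(u_n)\not\to0$ does not give $\|u_n\|_\infty$ bounded below, since on a block subspace one can have $\|u_n\|_\infty\to0$ while $\rho(u_n)\ge\delta$, and in that regime there is no admissible rescaling (the scalars must stay bounded to preserve $\rho(t_n(u_n+v_n))\to0$) that places $u_n$ at a fixed $\|\cdot\|_\infty$-scale. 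One has to split into cases, and in the complementary case compare $\epsilon$, chosen before the construction, with the basis constant $\inf\{\rho(y):\|y\|_\infty\ge1/2\}>0$ and with $\eta$. All of this can be patched, but as written it is missing.

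The paper's proof avoids the whole difficulty by exploiting the homogeneity of the auxiliary norm $\|\cdot\|_\infty$ rather than only the coordinate bounds it provides. Normalising $\|x_n\|_\infty=1$ and demanding $\rho(x_n-u_n)+\|x_n-u_n\|_\infty<\epsilon2^{-n}$, one gets for $Kv=\sum_j a_j(x_j-u_j)$ the homogeneous estimate $\|Kv\|_\infty\le4\epsilon\|v\|_\infty$, hence $\|Tv\|_\infty\ge(1-4\epsilon)\|v\|_\infty$ for $T=I+K$ defined on the single interleaved block subspace $[u_n]_{n=1}^{\infty}$; combined with the continuity of $\|\cdot\|_\infty$ (which is exactly where regularity enters) this shows $T$ is an isomorphism onto its range in $Y+Z$, and the projection of $V+W$ onto $V$ is simply $T^{-1}P_YT$, continuous as a composition. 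No contradiction argument, no rescaling, no case analysis. I recommend you restructure your final step along these lines: prove a homogeneous lower bound for one operator on the whole interleaved span, and read off the projection by conjugation.
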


\begin{proof}  Since $X$ is regular the seminorm $\|x\|_\infty=\sup|e_n^*(x)|$ defines a continuous norm on $X.$  Now, fixing $0<\epsilon<1/8$ we may inductively define  a sequence $(x_n)_{n=1}^{\infty}\in X$ and  a block basic sequence $(u_n)_{n=1}^{\infty}$ such that:\newline
(i) $\|x_n\|_\infty=1$ for all $n,$ \newline (ii) $\rho(x_n-u_n)+\|x_n-u_n\|_\infty<\epsilon/2^n$ for all $n$ and \newline (iii) $x_n\in Y$ for $n$ odd, $x_n\in Z$ for $n$ even.

Note that $\|u_n\|_\infty \ge 1-\epsilon>1/2.$

Let $P_Y$ be the canonical projections of $Y+Z$ onto $Y$ with kernel $Z.$
For $v=\sum_{j=1}^na_ju_j$ in the linear span of the sequence $(u_n)_{n=1}^{\infty}$ we define
$Kv = \sum_{j=1}^n a_j(x_j-u_j).$  Then
$$\rho(Kv) \le \epsilon \max_{1\le j\le n}|a_j|\le 2\epsilon \|v\|_\infty$$ so that $K$ is continuous.   Furthermore
$$ \|Kv\|_\infty \le 2 \epsilon \max_{1\le j\le n} |a_j|\sup\|u_n\|_\infty\le 4\epsilon\|v\|_\infty.$$

 Thus $T=I+K$ extends to a continuous operator $T:[u_n]_{n=1}^{\infty}\to Y+Z$.   Now $$\|Tv\|_\infty \ge (1-4\epsilon)\|v\|_\infty\ge 1/2\|v\|_\infty, \qquad v\in [u_n]_{n=1}^{\infty}.$$  If $(v_n)_{n=1}^{\infty}$ is a sequence such that $\lim_{n\to\infty}\rho(Tv_n)=0$ then $\lim_{n\to\infty}\|Tv_n\|_\infty=0$ and so $\lim_{n\to\infty}\|v_n\|_\infty=0.$  Thus $\lim_{n\to\infty}\rho(Kv_n)=0$ and hence $\lim_{n\to\infty}\rho(v_n)=0.$  Hence $T$ is an isomorphism of $[u_n]_{n=1}^{\infty}$ into $Y+Z$.   Consider the operator $S=T^{-1}P_YT$: then $S$ is a projection of $[u_n]_{n=1}^{\infty}$ onto $[u_{2n-1}]_{n=1}^{\infty}$ and the block subspaces $V=[u_{2n-1}]_{n=1}^{\infty}$ and $W=[u_{2n}]_{n=1}^{\infty}$ are separated.\end{proof}

\begin{Thm}\label{GDuncbasic} Let $X$ be an F-space with a regular basis containing no unconditional basic sequence.  Then $X$ has an HI subspace $Y$.\end{Thm}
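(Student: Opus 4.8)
The plan is to deduce the theorem from Theorem~\ref{main} together with Proposition~\ref{regularHI}. First I would record the reduction: by Proposition~\ref{regularHI} it is enough to produce a block subspace $Y$ of $X$ in which no two block subspaces are separated, since then $Y$ is HI. (Here one uses that a block sequence of the regular basis $(e_n)$ normalized in $\|\cdot\|_\infty$ is again regular: regularity of $(e_n)$ makes $\|\cdot\|_\infty$ a continuous norm, so $\|u\|_\infty=1$ forces $\rho(u)\ge\delta>0$, whence $Y$ carries a regular basis and Proposition~\ref{regularHI} applies to it.) For a constant $C$ let $\sigma_C\subset\Sigma_\infty(A_\infty)$ be the set of normalized block sequences $(z_n)$ for which the block subspaces spanned by the odd-indexed and by the even-indexed terms are separated with a separation modulus governed by $C$, and put $\sigma=\bigcup_C\sigma_C$. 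The dichotomy I would prove is that if some $\sigma_C$ is large then $X$ contains an unconditional basic sequence, while if no $\sigma_C$ is large then the desired $Y$ exists; under the hypothesis the first alternative is impossible, so the second holds.

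The easy half is the second alternative. If no $\sigma_C$ is large, choose for each $C$ a block subspace $V_C$ with $\sigma_C\cap\Sigma_\infty(V_C)=\emptyset$ and diagonalize, exactly as in the proof of Lemma~\ref{stabilization}, to a block subspace $Y$ with $Y\subset_a V_C$ for every $C$. If $Y$ had two separated block subspaces $A,B$, then passing to block sub-sequences $(a_k)$ of $A$ and $(b_k)$ of $B$ whose supports interlace (separation is inherited by block subspaces and survives a finite-dimensional perturbation, with a comparable constant) would realize the interleaved sequence $(a_1,b_1,a_2,b_2,\ldots)$ as an element of some $\sigma_{C'}$ inside $Y$, hence inside $V_{C''}$ after deleting finitely many coordinates, contradicting the choice of $V_{C''}$. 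Thus $Y$ has no separated block subspaces and is HI.

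For the first alternative I would apply Theorem~\ref{main} to $\sigma_C$. Its hypotheses are checked routinely: (a) holds with $C_n=\{x:\|x\|_\infty\le1\}$, whose intersections with finite-dimensional subspaces are compact and which contain $\sigma_C$; and (b) holds because ``separated with modulus $C$'' is a Borel, indeed closed, condition in $(\Sigma_\infty(E),\mathcal T_p)$, which is Polish by Lemma~\ref{Polish}, so $\sigma_C$ is analytic. Taking each $\rho_j$ equal to the ambient $F$-norm $\rho$ and $\epsilon$ small, Theorem~\ref{main} yields a block subspace $V$ and a strategy that, against an arbitrary sequence of subspaces of $V$, forces the resulting block sequence to lie within $\epsilon$ of $\sigma_C$; for small $\epsilon$ this means its odd and even parts are again separated with a constant close to $C$.

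The remaining, and hardest, step is to manufacture from this strategy a single \emph{unconditional} block sequence in $V$, which contradicts the hypothesis. The point is that strategic largeness separates only one two-colouring of the coordinates per run of the game (the odd/even split of the produced sequence), whereas unconditionality of a fixed sequence $(x_n)$ demands that \emph{every} split of $[x_n]_{n=1}^\infty$ into a subset and its complement be separated with one uniform constant, equivalently that the sign-change operators be equibounded. Bridging this gap is the main obstacle: I would build $(x_n)$ by a fusion argument that runs the separating strategy against a countable dense family of adversary plays simultaneously, so that after a diagonalization every split of the single surviving sequence inherits separation with a controlled constant, while arranging the successive $\epsilon$-errors to be summable so that the accumulated unconditionality constant remains finite. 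Once $(x_n)$ is seen to be unconditional the proof is complete; by comparison the two easy directions and the verification of the hypotheses of Theorem~\ref{main} are routine.
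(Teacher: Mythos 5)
There is a genuine gap, and it comes from setting up the dichotomy with the wrong orientation. The paper applies Theorem~\ref{main} to the \emph{bad} set: $\sigma$ is taken to be the set of normalized block sequences that \emph{fail} to be ``somewhat unconditional'' (i.e.\ for which the single alternating sign-change map $\sum a_ju_j\mapsto\sum(-1)^ja_ju_j$ is discontinuous). This set is large \emph{immediately} from the hypothesis that $X$ has no unconditional basic sequence, it is Borel, and Theorem~\ref{main} then makes its enlargement strategically large for some $V$. The payoff is that for any two block subspaces $W_1,W_2$ of $V$ one simply plays the strategy against $W_1,W_2,W_1,W_2,\ldots$; the output has its odd terms in $W_1$, its even terms in $W_2$, and fails somewhat-unconditionality, which says precisely that $W_1$ and $W_2$ are not separated. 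Proposition~\ref{regularHI} then finishes. Note that only the one alternating sign pattern is ever needed, exactly because the adversary alternates the two subspaces.

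You instead apply Theorem~\ref{main} to the \emph{good} set $\sigma_C$ of sequences with separated odd/even parts, and must then convert strategic largeness of $\sigma_{C,\epsilon}$ into a single unconditional basic sequence. This is where the proof fails, in three ways. First, the largeness of some $\sigma_C$ (the hypothesis of Theorem~\ref{main}) is never established and is not implied by anything in your case split. Second, and fatally, the ``fusion argument'' is not an argument: a strategy produces a \emph{different} output sequence for each adversary play, so there is no single surviving sequence that is simultaneously forced into $\sigma_{C,\epsilon}$ for a dense family of plays; strategic largeness gives one separated odd/even split per run of the game, never many splits of one fixed sequence. Third, even if you could manufacture a sequence every ``split'' of which is separated, that is still weaker than unconditionality, which requires equiboundedness of \emph{all} sign-change operators with one constant; no mechanism for that uniformity is offered. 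A telling symptom is that your ``easy half'' (the diagonalization when no $\sigma_C$ is large) never invokes Theorem~\ref{main} at all, so the entire content of the Gowers machinery has been displaced into the step you cannot carry out. The fix is to run the game on the complementary set, as the paper does.
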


\begin{proof} We assume that  $X$ has a regular basis  $(e_n)_{n=1}^{\infty}.$

We now consider the countable dimensional $E$ with Hamel basis $(e_n)_{n=1}^{\infty}.$  Note that the norm $\|\cdot\|_\infty$ on $E$ is continuous with respect to the F-space topology since $(e_n)_{n=1}^{\infty}$ is regular. For any block basic sequence $(u_n)_{n=1}^{\infty}$ we say that $(u_n)_{n=1}^{\infty}$ is somewhat unconditional if the map
$$\sum_{j=1}^\infty a_ju_j\to \sum_{j=1}^\infty(-1)^ja_ju_j$$ (defined for $(a_j)_{j=1}^{\infty}\in c_{00}$) is continuous for the F-space topology restricted to $E.$
Let $\sigma_0$ be the collection of all somewhat unconditional sequences.  We claim that with respect to $\mathcal T_p$ this set is a Borel subset of $\Sigma_\infty(E).$  Indeed let $(U_m)_{m=1}^{\infty}$ be a base of open neighborhoods of zero.  Let $\sigma_0(m,n)$ be the set of $(u_j)_{j=1}^{\infty}$ so that
$$ \sum_{j=1}^\infty a_ju_j\in U_m \ \implies\ \sum_{j=1}^\infty(-1)^ja_ju_j\in \overline{U}_n.$$
Then $\sigma_0(m,n)$ is $\mathcal T_p-$closed and $\sigma_0 = \cap_{n=1}^{\infty}\cup_{m=1}^{\infty}\sigma_0(m,n).$

We define $\sigma$ to be the subset of $\Sigma_\infty{E}$ of all block basic sequences $(u_n)_{n=1}^{\infty}$ such that $\|u_n\|_\infty=1$ for all $n$ and $(u_n)_{n=1}^{\infty}$ fails to be somewhat unconditional.  Then $\sigma$ is also Borel in $\mathcal T_p.$  Furthermore, since $X$ contains no unconditional basic sequence we conclude that $\sigma$ is large.

Fix some $0<\epsilon<1$ and let $\sigma'$ be the subset of $\Sigma_\infty(E)$ of all sequences $(v_j)_{j=1}^{\infty}$ such that
$$ \inf\{\sum_{j=1}^{\infty}(\|u_j-v_j\|_\infty+\rho(u_j-v_j)) :\ (u_j)_{j=1}^{\infty}\in\sigma\}<\epsilon.$$
Note that if $(v_j)_{j=1}^{\infty}\in \sigma'$ there exists $(u_j)_{j=1}^{\infty}\in\sigma$ which is equivalent to $(v_j)_{j=1}^{\infty}.$  Hence each $(v_j)_{j=1}^{\infty}\in\sigma'$ fails to be somewhat unconditional.

 According to Theorem \ref{main} we can find a block subspace $V$ so that $\sigma'$ is strategically large for $V$. Let $Y$ be the closure of $V$.  We show that $Y$ is HI.  $Y$ has a regular basis $(u_n)_{n=1}^{\infty}$ which is a block basis of $(e_n)_{n=1}^{\infty}.$   According to Proposition \ref{regularHI} we need only check that if $W_1,W_2$ are two block subspaces of $V$ then $W_1$ and $W_2$ cannot be separated.

 Let $\Phi$ be the strategy guaranteed by the fact that $\sigma'$ is strategically large.  Then
 $\Phi(W_1,W_2,W_1,W_2,\ldots)=(v_1,v_2,\ldots)$ and the sequence $(v_j)_{j=1}^{\infty}$ fails to be somewhat unconditional so that $W_1,W_2$ are not separated.
 \end{proof}

Let us now recall the criterion of the existence of basic sequences given in \cite{Kalton1974a} (see also \cite{KaltonShapiro1976}). An F-space $X$ is called {\it minimal} if  there is no strictly weaker Hausdorff topology on $X$.

\begin{Prop}\label{existencebasicsequences} If $X$ is a non-minimal F-space then $X$ contains a regular basic sequence.\end{Prop}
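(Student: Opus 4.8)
The plan is to convert non-minimality into a concrete discrepancy between two topologies, and then to run a Bessaga--Pe\l czy\'nski--style selection in which the weaker topology plays the role that continuous functionals play in the Banach space case. First I would fix an F-norm $\rho$ inducing the topology of $X$ and use non-minimality to choose a strictly weaker Hausdorff vector topology $\tau$. Since $\tau$ is strictly weaker, for some $\delta>0$ the ball $\{x:\rho(x)<\delta\}$ is not a $\tau$-neighbourhood of $0$ (otherwise every original neighbourhood of $0$ would be a $\tau$-neighbourhood, and being a vector topology $\tau$ would coincide with the original one). Consequently every $\tau$-neighbourhood $U$ of $0$ meets $\{x:\rho(x)\ge\delta\}$; this is the single structural fact that drives the whole construction, since it says that inside any prescribed $\tau$-neighbourhood one can always find a vector that is bounded below in the metric.

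Next I would build a basic sequence $(x_n)$ by an inductive selection that chooses each new vector on demand from inside a carefully prescribed $\tau$-neighbourhood. Having selected $x_1,\dots,x_n$, spanning a finite-dimensional space $F_n$, I would pick a $\tau$-neighbourhood $U_{n+1}$ of $0$ that is small relative to $F_n$ and then choose $x_{n+1}\in U_{n+1}$ with $\rho(x_{n+1})\ge\delta$, which is possible by the fact just noted. The aim of this bookkeeping is to keep the coordinate (partial-sum) operators equicontinuous. The essential point I would exploit is that all Hausdorff vector topologies agree on a finite-dimensional space, so on each initial span $[x_1,\dots,x_m]$ the topologies $\tau$ and the original one coincide. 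It therefore suffices to arrange that the coordinate projections are $\tau$-equicontinuous along the sequence and then to transfer those estimates back to $\rho$ through this finite-dimensional coincidence: by making each $x_{n+1}$ $\tau$-negligible relative to the fixed block $F_n$ already built, one forces a Grinblyum-type separation of $[x_1,\dots,x_m]$ from the succeeding coordinates, and completeness of $X$ then guarantees that the partial sums converge and that the closed span is the span of a genuine basic sequence.

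I expect the main obstacle to be exactly the uniformity in this inductive step. Because $\rho$ is not homogeneous and $\tau$ need be neither locally bounded nor normable, turning the $\tau$-separation of successive blocks into honest \emph{equicontinuity} of the partial-sum operators---uniform over all $m$ and all finite scalar combinations, so that the operators extend continuously to the completion---requires a delicate staged choice of neighbourhoods rather than a single basis constant as in the Banach case; this is the technical heart of Kalton's selection principle. Regularity, by contrast, is automatic: since $\rho(x_n)\ge\delta>0$ for every $n$, the basic sequence produced satisfies $\inf_n\rho(x_n)\ge\delta>0$ and is therefore regular, which completes the proof.
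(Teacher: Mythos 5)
The paper does not actually prove this proposition: it is recalled from \cite{Kalton1974a} (see also \cite{KaltonShapiro1976}), and your outline is indeed the strategy of those sources --- trade non-minimality for a strictly weaker Hausdorff vector topology $\tau$ and run a Bessaga--Pe\l czy\'nski-type selection with $\tau$ playing the role of the weak topology. Your opening reduction is correct and complete: if every ball $\{x:\rho(x)<\delta\}$ were a $\tau$-neighbourhood of $0$ then $\tau$ would refine, hence equal, the original topology, so for some $\delta>0$ every $\tau$-neighbourhood of $0$ contains a vector with $\rho\ge\delta$; and regularity of whatever sequence you select is then automatic.

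The gap is that the one step carrying all the weight --- the lemma you yourself flag as ``the technical heart'' --- is named but never established, and it is not routine. What is needed is: for every finite-dimensional $F\subset X$ and suitable tolerances there is a $\tau$-neighbourhood $V$ of $0$ such that every $x\in V$ with $\rho(x)\ge\delta$ satisfies a separation estimate of the form ``$\rho(y+tx)$ small implies $\rho(y)$ small'' for \emph{all} $y\in F$ and \emph{all} scalars $t$, quantified uniformly enough to make the partial-sum operators equicontinuous (note that an additive bound such as $\rho(P_m u)\le\rho(u)+C$ does not imply continuity of a linear map on an F-space, so the estimates must be of $\varepsilon$--$\delta$ type, uniform in $m$). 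The proof of this lemma is a contradiction argument split according to whether $|t|$ stays bounded: if it does, then $tx\to 0$ in $\tau$ and one deduces $y\to 0$ in $\tau|_F$, hence in $\rho$, by the finite-dimensional coincidence of Hausdorff vector topologies; if $|t|\to\infty$, one divides by $t$ (using $\rho(t^{-1}z)\le\rho(z)$ for $|t|\ge 1$) to conclude that $x$ is $\rho$-close to $F$ and again contradicts $\rho(x)\ge\delta$ via $\tau|_F$. Two pitfalls deserve explicit attention: first, $\rho$ need not be proper on $F$ (sublevel sets of an F-norm on a finite-dimensional space need not be compact --- this is exactly the $\omega$ phenomenon), so the naive compactness argument inside $F$ fails and one must argue through $\tau|_F$ as above rather than through bounded nets in $F$; second, the neighbourhood $V$ produced depends on $F=F_n$, so to keep the equicontinuity uniform in $n$ you must fix the entire sequence of tolerances in advance and check at each inductive stage that all previously made promises are preserved. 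As written, your proposal asserts the conclusion of this lemma rather than proving it.
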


Let us call an infinite-dimensional F-space $X$ {\it strongly HI (SHI)} if it contains a non-zero vector $e$ so that $e\in L$ for {\it every} infinite-dimensional closed subspace $L$ of $X$.   We remark that it is possible to consider spaces $X$ which satisfy the slightly stronger condition that any two infinite-dimensional closed subspaces have non-trivial intersection; this condition implies $X$ contains no basic sequence, but it is not clear if it implies that $X$ is SHI.
The problem is that we do not know if, under this condition, the intersection of any three infinite-dimensional closed subspaces is non-trivial.  This is related to the fact, discussed later, that the sum of two strictly singular operators need not be strictly singular (see the discussion after Theorem \ref{ss}).

\begin{Thm}\label{GDbasic} Let $X$ be an F-space containing no basic sequence.  Then $X$ has an SHI-subspace $Y$. \end{Thm}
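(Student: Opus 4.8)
The plan is to follow the blueprint of Theorem~\ref{GDuncbasic}, replacing the failure of unconditionality by a ``capture'' condition, with minimality supplying the continuity that a regular basis provided there. First I would record the reduction. Every closed subspace of $X$ again contains no basic sequence, so none contains a regular basic sequence; by the contrapositive of Proposition~\ref{existencebasicsequences} (a non-minimal space carries a regular basic sequence) both $X$ and all of its closed subspaces are \emph{minimal}. Fix a dense countable-dimensional subspace $E\subseteq X$ with Hamel basis $(e_n)$. The $F$-norm $\rho$ restricts to $E$ to give a vector topology coarser than the intrinsic topology $\mathcal T_E$, so the block-subspace machinery of \S\ref{countable} applies verbatim to $E$, while all ``largeness'' and ``capture'' statements are measured against $\rho$.

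The engine is the following capture set. For $e\in E\setminus\{0\}$ put
$$\sigma_e=\{(u_n)_{n=1}^\infty\in\Sigma_\infty(E):\ e\in\overline{[u_n]}^{\,\rho}\},$$
the block bases whose closed linear span (in the $X$-topology) contains $e$. As in the Borel computation for $\sigma_0$ in Theorem~\ref{GDuncbasic}, one checks that $\sigma_e$ is $\mathcal T_p$-Borel, hence analytic, and after normalizing the $u_n$ it satisfies the compactness hypothesis~(a) of Theorem~\ref{main}. I would then split on whether some $\sigma_e$ is large. If $\sigma_e$ is large for some $e$, Theorem~\ref{main} produces, for each $\epsilon>0$, a block subspace $V$ for which the $\epsilon$-fattening of $\sigma_e$ is strategically large; feeding the governing strategy the constant sequence $W,W,\dots$ for a fixed $W\in\mathcal B(V)$ forces a block sequence in $W$ whose span comes within $\epsilon$ of $e$, so $e$ lies within $\epsilon$ of the closure of \emph{every} block subspace of $V$. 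A diagonalization along $\epsilon\to0$ over a nested sequence of block subspaces then yields a subspace and a genuine nonzero vector lying in the closure of every block subspace.

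The other branch is where the no-basic-sequence hypothesis is actually spent, and I expect it to be the conceptual heart. If $\sigma_e$ fails to be large for every $e$ in a countable $\rho$-dense subset of $E$, then for each such $e$ there is a block subspace in which $e$ is topologically free (no block sequence captures it). Minimality guarantees that such topological freedom is genuine, so that the associated partial-sum projections are continuous; a Mazur-type gliding-hump construction, diagonalized over the countable list of $e$'s, then assembles a regular basic sequence, contradicting the hypothesis. Hence this branch is vacuous and the previous paragraph always applies, giving a block subspace $V$ and a concentration vector $e$ captured by every block subspace of $V$.

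It remains to upgrade from block subspaces to \emph{all} infinite-dimensional closed subspaces, and this is the main obstacle: unlike the setting of Proposition~\ref{regularHI}, the sequence $(e_n)$ is only a Hamel basis of a dense subspace and is not a Schauder basis of $Y:=\overline V$, so the coordinate functionals need not be continuous and the usual perturbation to block sequences is unavailable. The plan is to invoke minimality of $Y$ in its strong form: the absence of any strictly weaker Hausdorff topology forces each infinite-dimensional closed subspace $L\subseteq Y$ to absorb the closure of a block subspace of $V$, for otherwise the quotient map $Y\to Y/L$ would manufacture a proper coarser Hausdorff vector topology on a suitable subspace. Granting this, $e\in\overline W\subseteq L$ for the relevant block subspace $W$, so $e$ lies in every infinite-dimensional closed subspace of $Y$; that is, $Y$ is the desired SHI-subspace. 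The delicate point to be verified carefully is precisely this minimality-driven absorption, which replaces the regularity used in Theorem~\ref{GDuncbasic}.
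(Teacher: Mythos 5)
Your proposal has genuine gaps, and it also misses that this theorem does not need the Ramsey machinery at all. The paper's proof is a short lattice-theoretic argument: one considers \emph{subspace-filters} (families of closed infinite-dimensional subspaces closed under finite intersection), picks one whose total intersection $H$ has minimal dimension, shows $H\neq\{0\}$ (if $H=\{0\}$ the sets $U+L$ define a Hausdorff vector topology which by Proposition~\ref{existencebasicsequences} must equal the original one, and then a strictly decreasing chain $L_n\subset\{\rho<2^{-n}\}$ yields a basic sequence equivalent to the basis of $\omega$, a contradiction), and then handles the cases $\dim H=\infty$ (take $Y=H$, which is atomic by maximality) and $\dim H<\infty$ (use Lindel\"of to realize $H$ as $\cap L_n$, set $Y=[x_k]$ with $x_n\in L_n\setminus L_{n+1}$, and use an ultrafilter extension to show $H\subset W$ for every closed infinite-dimensional $W\subset Y$). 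No block bases, no strategies, no analytic sets.

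The concrete problems with your route are as follows. First, in the ``$\sigma_e$ not large'' branch you assert that topological freedom of $e$ plus minimality yields continuous partial-sum projections and hence, via a gliding hump, a regular basic sequence. This is backwards: the continuity of coordinate functionals is precisely what fails in spaces without basic sequences, and minimality is the \emph{obstruction} to Mazur's construction, not its enabler (this is the content of Proposition~\ref{existencebasicsequences} and of the example in \cite{Kalton1995a}). No argument is given, and I do not believe one exists along these lines. Second, even in the ``large'' branch, the conclusion of Theorem~\ref{main} concerns the $\epsilon$-fattening of $\sigma_e$ in the sequence metric $d$; to deduce that a perturbed block sequence $(v_n)$ still has $e$ within $\epsilon$ of its closed span you must bound $\rho\bigl(\sum a_j(u_j-v_j)\bigr)$ where the coefficients $a_j$ come from an approximation of $e$ in $\overline{[u_n]}$. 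Without a basis constant (i.e.\ exactly the regularity that Theorem~\ref{GDuncbasic} has and you do not), the $a_j$ are uncontrolled and this inference fails. Third, the final ``minimality-driven absorption'' upgrading block subspaces to arbitrary closed subspaces is stated as a hope rather than proved; it is the analogue of Proposition~\ref{regularHI}, whose proof genuinely uses the continuous norm $\|\cdot\|_\infty$ coming from a regular basis, and you offer no substitute.
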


\begin{proof}  We may assume that $X$ is separable.  Let us say that a collection $\mathcal L$ of closed subspaces of $X$ is a {\it subspace-filter} in $X$ if each $L\in\mathcal L$ is infinite-dimensional and $L_1\cap L_2\in\mathcal L$ whenever $L_1,L_2\in\mathcal L$; we say that a subspace-filter $\mathcal L$ is a {\it subspace-ultrafilter} if it is not contained properly in any other subspace-filter. We then pick $\mathcal L$ to be a subspace-filter such that $H=\cap\{L:L\in\mathcal L\}$ has minimal dimension ($1\le \dim H\le \infty$).

We will argue that $\dim H>0.$ Indeed if $H=\{0\}$ then we define a topology $\tau$ on $X$ by taking as a base of neighborhoods
sets of the form $U+L$ where $U$ is a neighborhood of zero in the F-space topology and $L\in\mathcal L.$ If $H=\{0\}$ then $\tau$ is Hausdorff. By Proposition \ref{existencebasicsequences} we have that $\tau$ coincides with the original topology.  Then we may find a strictly decreasing sequence $L_n\in\mathcal L$ so that $L_n\subset\{x:\rho(x)<2^{-n}\}.$  If we pick $x_n\in L_n\setminus L_{n+1},$ it is easy to verify that $(x_n)_{n=1}^{\infty}$ is a basic sequence equivalent to the canonical basis of $\omega.$

If $\dim H=\infty$ then it follows from maximality that $H$ has no proper closed infinite-dimensional subspace and so we may take $Y=H$ and $y$ any non-zero element of $Y.$  If $\dim H<\infty$ we first argue by Lindelof's theorem that since $X$ is separable we can find a descending sequence of subspaces $L_n\in\mathcal L$ so that $\cap L_n=H.$  We may suppose this sequence is strictly descending and take $x_n\in L_n\setminus L_{n+1}$ for $n\ge 1.$  Let $V_n=[x_k]_{k\ge n}$ so that $V_n\subset L_n.$  Suppose $W$ is any closed infinite-dimensional subspace of $V_1;$ then $\dim V_n\cap W=\infty$ for each $n.$ Let $\mathcal L'$ be any subspace-ultrafilter containing each $V_n$ and $W$.  Then $\cap\{L:L\in\mathcal L'\}\subset H$ but the inclusion cannot be strict because the original minimality assumption on $\dim H.$  Hence $H\subset W.$  Thus we can take $Y=V_1$ and $y\in H\setminus\{0\}.$\end{proof}

An examination of the proof shows that we have actually proved a slightly stronger result:

\begin{Cor}\label{GDbasic2} Let $X$ be an F-space containing no basic sequence.  Then $X$ has an SHI-subspace $Y$ with the property that if $E$ is the intersection of all infinite-dimensional subspaces of $Y$ then there is a descending sequence of infinite-dimensional subspaces $(L_n)_{n=1}^{\infty}$ of $Y$ with $\cap_{n=1}^{\infty}L_n=E.$ \end{Cor}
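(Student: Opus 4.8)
The plan is to re-examine the proof of Theorem~\ref{GDbasic} and observe that the two auxiliary facts we need are already established \emph{en route} rather than merely as consequences of the final conclusion. Specifically, in the case $\dim H<\infty$ we produced, via Lindel\"of's theorem, a strictly descending sequence $(L_n)_{n=1}^{\infty}$ in $\mathcal L$ with $\cap_{n=1}^{\infty}L_n=H$, and we took $Y=V_1=[x_k]_{k\ge 1}$ where $x_n\in L_n\setminus L_{n+1}$. The key structural fact proved there was that $H\subset W$ for \emph{every} closed infinite-dimensional subspace $W$ of $V_1$; equivalently, $H$ is contained in the intersection of all infinite-dimensional subspaces of $Y$. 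First I would note that the reverse inclusion is automatic, since $H$ itself is (contained in) infinite-dimensional subspaces, so in fact $H$ equals that intersection, which is the set called $E$ in the statement. Thus $E=H$ and the descending sequence $(L_n)$ already constructed satisfies $\cap_{n=1}^{\infty}L_n=E$, giving the corollary directly in this case.

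Next I would dispose of the case $\dim H=\infty$. Here the proof of Theorem~\ref{GDbasic} took $Y=H$, and maximality forced $H$ to have no proper closed infinite-dimensional subspace. Consequently every infinite-dimensional subspace of $Y$ is $Y$ itself, so the intersection $E$ of all such subspaces equals $Y=H$. Then the constant sequence $L_n=H=E$ is a (trivially) descending sequence of infinite-dimensional subspaces with $\cap_{n=1}^{\infty}L_n=E$, and the required property holds vacuously. I would therefore organize the argument by simply revisiting the two cases of the original proof and reading off $E$ and the sequence $(L_n)$ in each.

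The only genuine point requiring care, and what I expect to be the main (though minor) obstacle, is verifying that in the finite-dimensional case the set $E$ defined as the intersection of \emph{all} infinite-dimensional closed subspaces of $Y$ really coincides with $H=\cap\{L:L\in\mathcal L\}$, rather than being strictly smaller. The inclusion $E\subset H$ is the issue: one must confirm that $H$ itself arises as (or is contained in) the relevant intersection. This is exactly the content of the argument already given: for an arbitrary closed infinite-dimensional $W\subset V_1$ one builds a subspace-ultrafilter $\mathcal L'\supset\{V_n\}\cup\{W\}$, invokes minimality of $\dim H$ to conclude $\cap\mathcal L'=H$, and hence $H\subset W$. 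Since $Y=V_1$, every infinite-dimensional subspace of $Y$ contains $H$, so $H\subset E$; combined with $E\subset H$ (as $H$ is among the subspaces intersected once we check it is infinite-dimensional, which holds because $\dim H<\infty$ does \emph{not} mean $H$ is finite-dimensional as a subspace of $Y$—here $\dim H$ refers to dimension and $1\le\dim H<\infty$, so one must instead argue $E\subset H$ by exhibiting enough infinite-dimensional subspaces whose intersection is $H$, namely the $V_n$). Thus $E=H$, completing the verification.

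In summary, I would write the proof as a short remark: re-run the two cases of Theorem~\ref{GDbasic}, identify $E$ with $H$ in both, and point to the already-constructed descending sequence $(L_n)$ with $\cap_{n=1}^{\infty}L_n=H=E$. No new construction is needed; the entire content is the observation that the proof of Theorem~\ref{GDbasic} yields this finer information without modification.
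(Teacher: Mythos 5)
Your proposal is correct and follows essentially the same route as the paper, which itself only remarks that the proof of Theorem \ref{GDbasic} already yields the stronger statement; you correctly identify $E=H$ in both cases and locate the descending sequence inside that proof. One small adjustment: the descending sequence of subspaces \emph{of $Y$} required by the statement is $(V_n)_{n=1}^{\infty}$ rather than $(L_n)_{n=1}^{\infty}$ (the $L_n\in\mathcal L$ are subspaces of $X$ and need not lie in $Y=V_1$), but since $H\subset V_n\subset L_n$ for every $n$ one gets $\cap_{n=1}^{\infty}V_n=H=E$ exactly as in your final verification, so the fix is immediate.
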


We are now ready to establish the full force of the Gowers dichotomy for F-spaces.

\begin{Thm}\label{Gowers}  Let $X$ be an F-space.  If $X$ contains no unconditional basic sequence, then $X$ contains an HI subspace.\end{Thm}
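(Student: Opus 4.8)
The plan is to reduce the general statement to the two dichotomy theorems already in hand—Theorem~\ref{GDbasic} for F-spaces with no basic sequence and Theorem~\ref{GDuncbasic} for F-spaces with a regular basis—by a case analysis governed by whether $X$ contains a basic sequence at all.

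The first step I would record is that the hypothesis forces $X$ to contain no subspace isomorphic to $\omega$. The canonical basis of $\omega$ is an unconditional Schauder basis, since convergence in the product topology is coordinatewise and hence insensitive both to reordering and to sign changes. Thus any isomorphic copy of $\omega$ inside $X$ would carry this basis to an unconditional basic sequence in $X$, contradicting the hypothesis. This observation is precisely what is needed to invoke Proposition~\ref{regular} later.

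Next I would split into two cases. If $X$ contains no basic sequence, I apply Theorem~\ref{GDbasic} to obtain an SHI subspace $Y$ of $X$, and then verify that SHI implies HI: if $e$ is the distinguished non-zero vector lying in every infinite-dimensional closed subspace of $Y$, then for any two such subspaces $V,W$ we have $e\in V\cap W$, so $V\cap W\neq\{0\}$ and $V,W$ cannot be separated; hence $Y$ is the desired HI subspace. In the remaining case $X$ contains a basic sequence $(x_n)_{n=1}^{\infty}$, and since $X$ has no copy of $\omega$, Proposition~\ref{regular} supplies scalars $a_n>0$ making $(a_nx_n)_{n=1}^{\infty}$ regular. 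Its closed linear span $X_0$ is then an F-space with a regular basis which, being a subspace of $X$, still contains no unconditional basic sequence, so Theorem~\ref{GDuncbasic} produces an HI subspace $Y\subseteq X_0\subseteq X$.

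I expect essentially no technical obstacle remaining at this stage, since all the substance resides in the earlier theorems; the only points demanding a line of care are the unconditionality of the $\omega$-basis (ruling out copies of $\omega$) and the short implication SHI $\Rightarrow$ HI, both of which are immediate once stated. The role of this final argument is simply to organize the two regimes—"no basic sequence'' and "a basic sequence, hence after rescaling a regular one''—so that the two previously proved dichotomies cover every F-space.
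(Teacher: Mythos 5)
Your proposal is correct and follows essentially the same route as the paper: rule out copies of $\omega$, handle the no-basic-sequence case via Theorem~\ref{GDbasic} (with the immediate observation that SHI implies HI), and otherwise pass to the closed span of a regular basic sequence obtained from Proposition~\ref{regular} and apply Theorem~\ref{GDuncbasic}. The only difference is that you spell out the two small justifications (unconditionality of the canonical basis of $\omega$, and SHI $\Rightarrow$ HI) that the paper leaves implicit.
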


\begin{proof}  If $X$ contains no basic sequence then $X$ contains a SHI subspace (Theorem \ref{GDbasic}).  So we may assume $X$ has a basis.  Clearly $X$ cannot contain a copy of $\omega$ so we can assume the basis is regular (Proposition \ref{regular}).  Now apply Theorem \ref{GDuncbasic}.\end{proof}

We conclude this section with:

\begin{Thm}\label{Gowers2} Let $X$ be an HI F-space.  Suppose $X$ has a closed infinite-dimensional subspace containing no basic sequence.  Then $X$ contains no basic sequence.\end{Thm}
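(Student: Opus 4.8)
The plan is to argue by contradiction: assume $X$ contains a basic sequence and manufacture one inside the subspace $Z$, which by hypothesis has none. First I would normalise the data. Since $X$ is HI it can contain no closed copy of $\omega$: in $\omega$ the pair $\{x:x_{2n}=0\}$, $\{x:x_{2n-1}=0\}$ is separated, so the images of these spaces would be separated infinite-dimensional closed subspaces of $X$. Hence by Proposition \ref{regular} any basic sequence in $X$ may be rescaled to a \emph{regular} basic sequence $(x_n)$; write $M$ for its closed span and $M_k=\overline{[x_n:n\ge k]}$ for its tails. Regularity makes $\|\cdot\|_\infty=\sup_n|x_n^*|$ a continuous norm on $M$, the coordinate functionals $x_n^*$ continuous, and $\bigcap_k M_k=\{0\}$.

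Next I would play $Z$ off against the tails using the HI hypothesis. For every $k$ the spaces $Z$ and $M_k$ are infinite-dimensional and closed, hence not separated, and I split into cases according to whether $Z\cap M_k$ is infinite-dimensional. If $Z\cap M_k$ is infinite-dimensional for some $k$, then it is an infinite-dimensional closed subspace of $M$; choosing $w_n\in Z\cap M_k$ with $x_j^*(w_n)=0$ for $j\le n$ and $\|w_n\|_\infty=1$ (the relevant kernels have finite codimension) gives $w_n\to 0$ coordinatewise while $\rho(w_n)$ stays bounded away from $0$, so the coordinatewise topology is a strictly weaker Hausdorff vector topology and $Z\cap M_k$ is non-minimal. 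By Proposition \ref{existencebasicsequences} it contains a regular basic sequence, which lies in $Z$, a contradiction.

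The remaining case is the crux. If $Z\cap M_k$ is finite-dimensional for all large $k$, then, these spaces decreasing to $Z\cap\bigcap_k M_k=\{0\}$, I may fix $k$ with $Z\cap M_k=\{0\}$. Now non-separation together with $Z\cap M_k=\{0\}$ means precisely that the projection $Z+M_k\to Z$ is discontinuous, i.e. $Z+M_k$ is not closed. The key observation is that this \emph{directly} supplies a weaker topology on $Z$: the quotient map $q\colon X\to X/M_k$ restricts to a continuous linear injection $q|_Z$ (its kernel is $Z\cap M_k=\{0\}$), and pulling back the quotient topology gives a Hausdorff vector topology $\tau''$ on $Z$ coarser than the $F$-topology. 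If $\tau''$ equalled the original topology, $q|_Z$ would be an isomorphism onto its image and the projection $Z+M_k\to Z$ would be continuous, contradicting non-separation; hence $\tau''$ is strictly weaker and $Z$ is non-minimal. Proposition \ref{existencebasicsequences} again produces a regular basic sequence in $Z$, the final contradiction.

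The step I expect to be the main obstacle is this last one. The natural impulse is to build the basic sequence in $Z$ by hand, perturbing a block basis of $M$ by nearby vectors of $Z$; but this cannot succeed, since $Z$ genuinely has no basic sequence and the coordinate norm $\|\cdot\|_\infty$ is available only on $M$, so the usual perturbation estimate (as in Proposition \ref{regularHI}) cannot be controlled on the $Z$-side. The resolution is to abandon the explicit construction and instead read the failure of separation as the non-minimality of $Z$, letting Proposition \ref{existencebasicsequences} do the work; recognising the pulled-back quotient topology as the required strictly weaker topology is the one genuinely new idea in the argument.
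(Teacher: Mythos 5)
Your proof is correct, and it isolates the same essential mechanism as the paper while organizing the argument around it quite differently. The decisive step is shared: when $Z$ and a closed infinite-dimensional subspace $W$ satisfy $Z\cap W=\{0\}$, the HI hypothesis forces $Z+W$ to be non-closed, and then the $F$-norm $x\mapsto d(x,W)$ on $Z$ (which is exactly your pulled-back quotient topology via $X\to X/W$) is a strictly weaker Hausdorff vector topology, so Proposition \ref{existencebasicsequences} plants a basic sequence inside $Z$; this is precisely how the paper treats the case $L_n\cap V_n=\{0\}$. Where you diverge is the surrounding architecture. The paper proves the stronger assertion that \emph{every} descending sequence $(V_n)$ of closed infinite-dimensional subspaces of $X$ has nontrivial intersection, and to handle the case where the intersections remain infinite-dimensional it must invoke Corollary \ref{GDbasic2} (hence the subspace-ultrafilter construction behind Theorem \ref{GDbasic}) to produce a chain $(L_n)$ in $Z$ whose intersection $E\neq\{0\}$ lies in every infinite-dimensional subspace. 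You instead argue directly by contradiction against the tails $M_k$ of a putative regular basic sequence, and in the case $\dim(Z\cap M_k)=\infty$ you exploit the extra structure of $M_k$ --- continuous coordinate functionals and the continuity of $\|\cdot\|_\infty$ coming from regularity --- to exhibit the coordinatewise topology as a strictly weaker Hausdorff vector topology on $Z\cap M_k$, again finishing with Proposition \ref{existencebasicsequences}. The trade-off is clear: your route is self-contained modulo Propositions \ref{regular} and \ref{existencebasicsequences} and bypasses the SHI machinery entirely, but it yields only the stated theorem, whereas the paper's route delivers the genuinely stronger chain-intersection property for arbitrary descending sequences. The supporting facts you rely on all check out: an HI space cannot contain $\omega$ since the even- and odd-coordinate subspaces of $\omega$ are separated; $\|w_n\|_\infty=1$ together with continuity of $\|\cdot\|_\infty$ keeps $\rho(w_n)$ away from zero; and a decreasing sequence of finite-dimensional subspaces with trivial intersection is eventually zero, so you may indeed fix $k$ with $Z\cap M_k=\{0\}$ in the final case.
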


\begin{proof} We will show that if $(V_n)_{n=1}^{\infty}$ is any descending sequence of closed infinite-dimensional subspaces of $X$ then $\cap_{n=1}^{\infty}V_n\neq \{0\}.$
We use Corollary \ref{GDbasic2} to deduce the existence of a descending sequence of infinite-dimensional closed subspaces $(L_n)_{n=1}^{\infty}$ such that, if $E=\cap_{n=1}^{\infty}L_n$, then $E\neq\{0\}$ and $E$ is contained in any infinite-dimensional subspace of $L_1.$  Consider the sequence $(L_n\cap V_n)_{n=1}^{\infty}.$  Then if $\dim L_n\cap V_n=\infty$ for all $n$ we have $E\subset \cap_{n=1}^{\infty}L_n\cap V_n\subset \cap_{n=1}^{\infty}V_n.$

If not then there exists $n_0$ such that $\dim (L_n\cap V_n)$ is finite and constant for $n\ge n_0.$  Hence $L_n\cap V_n =F$ some fixed finite dimensional subspace for $n\ge n_0.$  We show $\dim F>0.$  If for some $n\ge n_0$ we have $L_n\cap V_n=\{0\}$ then $L_n+V_n$ cannot be closed since $X$ is HI.  Thus there are sequences $(x_k)_{k=1}^{\infty}$ in $L_n$ and $(v_k)_{k=1}^{\infty}\in V_n$ so that $\lim\rho(x_k+v_k)=0$ but $\rho(x_k)\ge \delta>0$ for all $k.$  Consider the metric topology on $L_n$ defined by the F-norm $x\to d(x,V_n):=\inf\{\rho(x+v):\ v\in V\}.$  This is topology is Hausdorff on $L_n$ and strictly weaker than the $\rho-$topology.  Hence $L_n$ contains a basic sequence by Proposition \ref{existencebasicsequences}, and this is a contradiction.  Hence $\dim F>0$ and $F\subset \cap_{n=1}^{\infty}V_n.$\end{proof}

\section{Strictly singular maps}\label{strictly}

In \cite{GowersMaurey1993} the following Theorem is shown:

\begin{Thm}\label{ss}  Let $X$ be a complex Banach space.  If $X$ is HI then every bounded linear operator $T:X\to X$ is of the form $T=\lambda I +S$ where $S$ is strictly singular.\end{Thm}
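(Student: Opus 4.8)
The plan is to adapt the classical Gowers--Maurey argument to the quasi-Banach setting while isolating exactly where completeness or convexity is used. The statement asserts that on a complex HI Banach space every operator $T$ is a scalar plus a strictly singular operator. Since the theorem as stated is the classical Banach-space result quoted from \cite{GowersMaurey1993}, the natural proof is the original one, and the real work (announced in the introduction) is its quasi-Banach generalization; here I sketch the Banach argument that the paper presumably reproduces before generalizing. First I would reduce to showing that for any operator $T$ there is a scalar $\lambda$ so that $T-\lambda I$ is not bounded below on any infinite-dimensional subspace, since an operator that is not bounded below on any subspace of every further subspace is strictly singular (one passes to subspaces and uses that failure of boundedness below on each subspace yields, by a gliding-hump/diagonal argument, a normalized sequence on which $T-\lambda I$ tends to zero).

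The heart of the matter is the spectral step. I would argue that for a single operator $T$ on an HI space, the essential spectrum must be a single point: suppose $\lambda \neq \mu$ are two scalars such that both $T-\lambda I$ and $T-\mu I$ are bounded below on some infinite-dimensional subspaces $V,W$ respectively. The key is that if $T-\lambda I$ is bounded below on $V$, then $(T-\lambda I)(V)$ is a closed subspace and, after shrinking, one obtains that the ranges corresponding to well-separated values of $\lambda$ give rise to two infinite-dimensional closed subspaces that are separated, contradicting the HI property. Concretely, I would use the resolvent-type estimate: on vectors $x$ where $T-\lambda I$ is bounded below, $x$ is approximately an eigenvector, and two approximate eigenspaces for distinct eigenvalues are almost disjoint with a continuous projection between their sum, i.e. separated. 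This is where complex scalars are essential, because I need the scalar $\lambda$ to exist, which is guaranteed by the non-emptiness of the spectrum of the (compact-like) restriction on a subspace, a purely complex phenomenon.

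The main obstacle, and the step I expect to require the most care, is establishing that distinct approximate-eigenvalue subspaces are genuinely \emph{separated} in the sense defined in \S\ref{applications} (trivial intersection \emph{and} continuous projection on the closed sum). Merely having $\|(T-\lambda I)x\|\geq c\|x\|$ on $V$ and $\|(T-\mu I)y\|\geq c\|y\|$ on $W$ does not immediately give continuity of the projection $V+W\to V$; one needs a quantitative lower bound on $\|x+y\|$ in terms of $\|x\|$ for $x\in V,\ y\in W$. I would obtain this by applying $T-\tfrac{\lambda+\mu}{2}I$ (or the operator $(T-\lambda I)-(T-\mu I)=(\mu-\lambda)I$) to the sum and using the two boundedness-below estimates together with the triangle inequality: from $(\mu-\lambda)(x+y) = (T-\lambda I)(x+y) - ((T-\mu I)(x+y))$ after regrouping the cross terms, one extracts $\|x\|\lesssim \|x+y\|$ with a constant depending only on $|\lambda-\mu|$ and $c$. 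Having produced two separated infinite-dimensional subspaces, the HI hypothesis is violated, forcing a unique $\lambda$; then $T-\lambda I$ is bounded below on no subspace, hence strictly singular, completing the proof.
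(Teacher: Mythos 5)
Your proposal has the architecture of the argument inverted at the crucial point, and the resulting central claim is false. You propose to derive a contradiction from the existence of two scalars $\lambda\neq\mu$ such that $T-\lambda I$ and $T-\mu I$ are each bounded below on some infinite-dimensional subspace. Take $T=0$: then $T-\lambda I=-\lambda I$ is bounded below on all of $X$ for every $\lambda\neq 0$, yet $T$ trivially has the required form; the subspaces $V,W$ can simply coincide and no separation is available. The source of the error is your sentence ``on vectors $x$ where $T-\lambda I$ is bounded below, $x$ is approximately an eigenvector'' --- this is exactly backwards: an approximate eigenvector for $\lambda$ is a normalized $x$ with $(T-\lambda I)x$ \emph{small}, i.e.\ a witness that $T-\lambda I$ is \emph{not} bounded below. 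What must be proved is not uniqueness of a $\lambda$ with $T-\lambda I$ bounded below somewhere (false), but \emph{existence} of a $\lambda$ with $T-\lambda I$ bounded below \emph{nowhere}; indeed for every other $\mu$ the operator $T-\mu I=(\lambda-\mu)I+(T-\lambda I)$ is then Fredholm and bounded below on a finite-codimensional subspace.

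That existence step is the real content, and your appeal to ``non-emptiness of the spectrum'' does not supply it: a boundary point of $\mathrm{Sp}(T)$ may be an isolated eigenvalue of finite multiplicity, in which case $T-\lambda I$ is still semi-Fredholm and bounded below on a subspace of finite codimension. The route taken in the paper (following Gowers--Maurey) is to prove that some $T-\lambda I$ fails to be semi-Fredholm (Theorem \ref{semiF}): assuming the contrary, each boundary point of the spectrum is an eigenvalue; an accumulation point of such eigenvalues produces an invariant subspace on which Lemma \ref{000} gives a contradiction; hence the spectrum is finite and $\prod_k(T-\lambda_k I)$ is a quasi-nilpotent semi-Fredholm operator, contradicting the key lemma (Proposition \ref{GowersMaurey}, Lemma 19 of \cite{GowersMaurey1993}) that a quasi-nilpotent operator on an infinite-dimensional complex space is never semi-Fredholm. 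None of this appears in your sketch. The one piece you do have essentially right is the final use of the HI hypothesis: once $T-\lambda I$ is not semi-Fredholm, one gets for every $\epsilon>0$ an infinite-dimensional $V$ with $\|(T-\lambda I)|_V\|<\epsilon$, and if $T-\lambda I$ were also bounded below on some $W$, then $V$ and $W$ would be separated by the quantitative estimate you describe, contradicting HI (this is Theorem \ref{semiFss}). But that separation argument is applied to a single $\lambda$, with the two subspaces playing \emph{opposite} roles (approximate kernel versus domain of boundedness below), not to two distinct scalars with the same role.
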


We do not know whether such a theorem can hold for a complex F-space but we show that it holds equally for complex quasi-Banach spaces.  There are some small wrinkles in the proof as the reader will see.

>From now on we will deal with quasi-Banach space $X$ (or $Y$, etc.) with a given quasi-norm which is assumed to be $p$-subadditive (for a suitable $0<p\le 1$) i.e.
$$ \|x+y\|^p\le \|x\|^p+\|y\|^p, \qquad x,y\in X.$$

A linear operator $T:X\to Y$ is an {\it isomorphic embedding} if there exists $c>0$ so that
$\|Tx\|\ge c\|x\|$ for $x\in X.$  $T$ is called {\it strictly singular} if $T|_V$ fails to be an isomorphic embedding for every infinite-dimensional subspace $V$ of $X$.  $T$ is called {\it semi-Fredholm} if $\ker T$ is finite-dimensional and $T(X)$ is closed.  $T$ is called {\it Fredholm} if $T$ is semi-Fredholm and $\dim Y/T(X)<\infty.$

$T$ is semi-Fredholm if and only if for every bounded sequence $(x_n)_{n=1}^{\infty}$ such that $\lim_{n\to\infty}\|Tx_n\|=0$ we can extract a convergent subsequence.  Thus it is clear the restriction of a semi-Fredholm operator to an infinite-dimensional closed subspace remains semi-Fredholm.

Let us make some remarks.  Suppose $X$ is a SHI space and let $E_X$ be the intersection of all closed infinite-dimensional subspaces of $X.$  If $\dim E_X=\infty$ then $E_X$ is an {\it atomic space} i.e. it has no proper closed infinite-dimensional subspace.  The existence of atomic spaces is still open (the only known results in this direction are in \cite{Reese1992}).  However it is known that there exist quasi-Banach spaces $X$ for which $E_X$ is finite-dimensional and non-trivial, even with $\dim E_X>1$ (\cite{Kalton1995}, Theorem 5.5).  The quotient map $Q:X\to X/E_X$ is then both semi-Fredholm and strictly singular (this cannot happen for operators on Banach spaces).  Furthermore if $\dim E_X>1$ then  let $L_1,L_2$ be two distinct one-dimensional subspaces of $E_X$.  Then the quotient maps $Q_1:X\to X/L_1$ and $Q_2:X\to X/L_2$ are both strictly singular and semi-Fredholm.  However the map $x\to (Q_1x,Q_2x)$ from $X$ into $X/L_1\oplus X/L_2$ is an isomorphism.  Thus the sum of two strictly singular operators need not be strictly singular!

The key fact we will need is the following:

\begin{Thm}\label{semiF} Let $X$ be an infinite-dimensional complex quasi-Banach space and suppose $T:X\to X$ is a bounded operator.  Then there exists $\lambda\in\mathbb C$ so that $T-\lambda I$ is not semi-Fredholm.\end{Thm}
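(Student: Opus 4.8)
The plan is to argue by contradiction: suppose $T-\lambda I$ is semi-Fredholm for every $\lambda\in\mathbb C$, and deduce that $X$ must be finite-dimensional, contradicting the hypothesis. First I would record the elementary stability facts. The set of semi-Fredholm operators is open in $\mathcal L(X)$ and the index is locally constant there; this follows, exactly as in the Banach case, from the sequential characterization of semi-Fredholmness given above together with a small-perturbation argument, and it goes through verbatim once the quasi-norm is $p$-subadditive. Since $\lambda\mapsto T-\lambda I$ is entire (in particular continuous) and $\mathbb C$ is connected, the index of $T-\lambda I$ is constant in $\lambda$. For $|\lambda|$ large the Neumann series shows $T-\lambda I$ is invertible, so the constant index is $0$: thus $T-\lambda I$ is Fredholm of index $0$ for every $\lambda$.

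Next I would analyze the spectrum $\Sp(T)$. By the analytic Fredholm (punctured-neighbourhood) theorem, applied to the entire index-$0$ family $T-\lambda I$, the set where $T-\lambda I$ fails to be invertible — namely $\Sp(T)$ — is discrete in $\mathbb C$: off this set the kernel of $T-\lambda I$ attains its generic minimal value, which is the value $0$ taken near infinity, whereupon index $0$ forces bijectivity. The proof of that theorem reduces to finite-dimensional linear algebra plus the holomorphic functional calculus, so it survives the passage to quasi-Banach spaces. Boundedness of $\Sp(T)$ comes from the same Neumann-series estimate. The one genuinely delicate input is that $\Sp(T)\neq\emptyset$; I would obtain this from a Liouville theorem for analytic functions valued in a quasi-Banach space, pushing Cauchy's estimates through by means of $p$-subadditivity, so that a bounded entire $\mathcal L(X)$-valued function is constant and hence $\lambda\mapsto(T-\lambda I)^{-1}$ cannot be entire. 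Consequently $\Sp(T)$ is a nonempty, bounded, discrete subset of $\mathbb C$, and is therefore finite.

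Finally I would localize at each spectral point. Fix $\lambda_0\in\Sp(T)$; it is an isolated point of the spectrum, so the Riesz projection $P_0=\frac{1}{2\pi i}\oint(\zeta I-T)^{-1}\,d\zeta$ over a small circle about $\lambda_0$ splits $X=\mathrm{Ran}\,P_0\oplus\ker P_0$ into $T$-invariant subspaces with $\Sp(T|_{\mathrm{Ran}\,P_0})=\{\lambda_0\}$. Because $T-\lambda_0 I$ is Fredholm, the isolated spectral point $\lambda_0$ is a pole of the resolvent of finite rank, i.e. $\mathrm{Ran}\,P_0$ is finite-dimensional. Decomposing over the finitely many points of $\Sp(T)$ — the corresponding Riesz projections being orthogonal idempotents summing to $I$ via the functional calculus — exhibits $X$ as a finite direct sum of finite-dimensional spectral subspaces, so $\dim X<\infty$, the required contradiction.

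I expect the \emph{main obstacle} to be the spectral-theoretic toolkit in the quasi-Banach category, where $X^*$ may be trivial so the standard duality-based arguments are unavailable: establishing the Liouville theorem for quasi-Banach-valued analytic functions (hence nonemptiness of the spectrum) and verifying that the holomorphic functional calculus, the Riesz projections, and the analytic Fredholm theorem all remain valid. Everything else is formal and identical to the Banach case; the real work lies in checking that $p$-subadditivity is an adequate substitute for the triangle inequality in the relevant Cauchy-integral estimates.
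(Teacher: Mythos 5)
Your overall architecture (show that if $T-\lambda I$ is always semi-Fredholm then $\Sp(T)$ is finite, and then derive a contradiction) matches the paper's, and you correctly point to the spectral-theoretic toolkit as the danger zone; but the two tools you propose to rebuild it with are exactly the ones that break in the quasi-Banach category, so there are genuine gaps. First, Liouville's theorem for quasi-Banach-valued entire functions is not available: it is known to fail in general (there are bounded non-constant entire functions into $L_p$ for $0<p<1$), and ``pushing Cauchy's estimates through by means of $p$-subadditivity'' cannot work because the Cauchy/contour integral of a continuous quasi-Banach-valued function need not even exist --- Riemann sums are convex combinations, and convex hulls of compact sets can be unbounded in a non-locally-convex space. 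Non-emptiness of $\Sp(T)$ is nevertheless true, but the paper gets it from {\. Z}elazko (Proposition \ref{spectralradiusformula}): one passes to the commutative quasi-Banach algebra given by the double commutant of $T$, where $r(x)=\lim\|x^n\|^{1/n}$ is a seminorm and Gelfand--Mazur applies; no vector-valued integration is involved. Second, and more fatally, the Riesz projections in your endgame cannot exist in general: a bounded finite-rank projection on a quasi-Banach space with trivial dual (e.g.\ $L_p$, $0<p<1$) is necessarily zero, since each of its coordinates is a continuous linear functional. So the finite-dimensional spectral subspaces you want to split off need not be complemented --- this is precisely the obstruction the paper announces when it says its tricks ``are necessitated by the fact that finite-dimensional subspaces are not always complemented'' --- and neither the holomorphic functional calculus nor the analytic Fredholm/punctured-neighbourhood theorem ``survives the passage'' as you assert. (Your claim that local constancy of the index goes through ``verbatim'' is suspect for the same reason: the standard proof complements $\ker T$.)

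For comparison, the paper's route needs much less. After {\. Z}elazko, it uses only the clopen-ness of the invertibles inside the embeddings and inside the surjections (Proposition \ref{clopen}) to show that every $\lambda\in\partial\Sp(T)$ makes $T-\lambda I$ neither an embedding nor a surjection (Lemma \ref{000}); under the semi-Fredholm hypothesis each such $\lambda$ is therefore an eigenvalue, and an accumulation point of $\partial\Sp(T)$ is ruled out by restricting $T$ to the closed span of the corresponding eigenvectors, where $T-\lambda I$ has dense closed range, hence is onto, contradicting Lemma \ref{000}. Thus $\Sp(T)=\{\lambda_1,\dots,\lambda_n\}$ is finite and $S=\prod_k(T-\lambda_kI)$ is quasi-nilpotent and semi-Fredholm; that this is impossible on an infinite-dimensional space is proved not by functional calculus but by reductions (Lemmas \ref{111}--\ref{333}): pass to a cyclic subspace to make $S$ Fredholm, dualize to the Banach space $X^*$ where the Gowers--Maurey lemma (Proposition \ref{GowersMaurey}) forces $\dim X^*<\infty$, and then work on the annihilator of $X^*$, where the dual is trivial and Fredholm forces surjectivity, again contradicting Lemma \ref{000}. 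To repair your write-up you would need to replace Liouville by the citation to {\. Z}elazko and replace the Riesz-projection decomposition by an argument of this kind.
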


This Theorem is proved for Banach spaces by Gowers and Maurey \cite{GowersMaurey1993}.  The proof for quasi-Banach spaces requires some additional tricks.  These tricks are necessitated by the fact that finite-dimensional subspaces are not always complemented.

We list the relevant facts we need:

\begin{Prop}\label{spectralradiusformula}  If $X$ is a complex quasi-Banach space and $T:X\to X$ is a bounded linear operator then $\text{Sp}(T)=\{\lambda\in\mathbb C:\ T-\lambda I\ \text{is not invertible}\}$ is a non-empty compact set and $\max_{\lambda\in\Sp(T)}|\lambda|=\lim_{n\to\infty}\|T^n\|^{1/n}.$\end{Prop}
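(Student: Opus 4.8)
The plan is to treat $\mathcal{B}(X)$ as a complete unital $p$-Banach algebra and read the statement off from the spectral theory of such algebras. The operator quasi-norm $\|T\|=\sup_{\|x\|\le1}\|Tx\|$ is submultiplicative, satisfies $\|S+T\|^p\le\|S\|^p+\|T\|^p$ and $\|I\|=1$, so $A=\mathcal{B}(X)$ is a complete $p$-normed algebra and $T$ is one of its elements. Submultiplicativity makes $n\mapsto\log\|T^n\|$ subadditive, so $\lim_n\|T^n\|^{1/n}=\inf_n\|T^n\|^{1/n}$ exists in $[0,\infty)$ (interpreted as $0$ if some power vanishes).

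First I would dispose of compactness and the easy inequality. Invertibility is an open condition: if $S^{-1}$ exists and $\|S^{-1}\|\,\|S-R\|<1$ then $\sum_k(S^{-1}(S-R))^k$ converges in $A$ — I use only the $p$-normed comparison test, $\sum_k\|c_k\|^p<\infty\Rightarrow\sum_k c_k$ converges — and produces $R^{-1}$. Hence the resolvent set is open and $\Sp(T)$ is closed. The same geometric series gives $(\lambda I-T)^{-1}=\sum_k\lambda^{-k-1}T^k$ whenever $\sum_k|\lambda|^{-(k+1)p}\|T^k\|^p<\infty$, and by the root test this holds for every $|\lambda|>\lim_n\|T^n\|^{1/n}$. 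Thus $\Sp(T)$ is contained in the disc of radius $\lim_n\|T^n\|^{1/n}$, which yields both compactness and $\max_{\lambda\in\Sp(T)}|\lambda|\le\lim_n\|T^n\|^{1/n}$.

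The reverse inequality and non-emptiness form the crux, and here the quasi-Banach setting bites: the Banach-space proof analyses the resolvent $(\lambda I-T)^{-1}$ as an $A$-valued analytic function and applies Liouville or a Cauchy estimate, but both routes are blocked — the Hahn--Banach reduction to scalar functionals is unavailable (the dual of a quasi-Banach space may be trivial), and a Cauchy contour integral of a merely continuous $A$-valued integrand need not converge when $p<1$, since the natural Riemann-sum bound involves $\sum_i|\Delta\lambda_i|^p\to\infty$. I expect this to be the main obstacle. The way through is to observe that $z\mapsto z(I-zT)^{-1}$ is invertible-valued and analytic on the disc $\{|z|<1/\max_{\lambda\in\Sp(T)}|\lambda|\}$, equalling $\sum_{k\ge0}z^{k+1}T^k$ near $0$; the Cauchy--Hadamard theorem holds verbatim for $A$-valued power series (convergence is controlled by $\sum|z|^{kp}\|T^k\|^p$), so the radius of convergence of this series is $1/\limsup_n\|T^n\|^{1/n}$. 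The one analytic fact I must import — that an $A$-valued function analytic on a disc is the sum there of a power series reaching the boundary — forces $\limsup_n\|T^n\|^{1/n}\le\max_{\lambda\in\Sp(T)}|\lambda|$, and an empty spectrum would make this function entire with incompatible growth.

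I would supply this analytic input by invoking the spectral theory of $p$-Banach algebras (\.{Z}elazko), in which non-emptiness of the spectrum and the formula $\max_{\lambda\in\Sp(T)}|\lambda|=\lim_n\|T^n\|^{1/n}$ are established for complete $p$-normed algebras. For non-emptiness alone there is a cleaner, purely algebraic route that sidesteps the analysis: pass to the commutative closed subalgebra $B\subseteq A$ generated by $T$; a commutative complex $p$-Banach algebra carries a non-zero multiplicative linear functional $\chi$, and $\chi(T)\in\Sp_B(T)$, so $\Sp_B(T)$ is a non-empty compact subset of $\mathbb{C}$ and therefore has non-empty boundary. Since $\partial\Sp_B(T)\subseteq\Sp(T)$ (the standard perturbation argument transfers, using only openness of invertibles and closedness of $B$), we get $\Sp(T)\neq\emptyset$, after which the maximum modulus of $\Sp(T)$ is read off from the analytic input above.
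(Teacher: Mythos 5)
Your proposal is essentially sound and, like the paper, ultimately rests on \.{Z}elazko's spectral theory of $p$-normed algebras: the paper gives no self-contained proof either, but cites \cite{Zelazko1961} and sketches the reduction --- the formula $r(x)=\lim_n\|x^n\|^{1/n}$ defines a seminorm on a commutative quasi-Banach algebra, whence a Gelfand--Mazur theorem and a Gelfand theory, applied to the double commutant of $T$. Your elementary steps (Neumann series with the $p$-subadditive comparison test, openness of the invertibles, inclusion of $\Sp(T)$ in the closed disc of radius $\lim_n\|T^n\|^{1/n}$) are exactly right. Two points of comparison. First, where the paper passes to the double commutant $\{T\}''$ --- commutative and inverse-closed, so that the relative spectrum equals $\Sp(T)$ on the nose --- you pass to the closed subalgebra $B$ generated by $T$ and $I$ and use $\partial\,\Sp_B(T)\subseteq\Sp(T)$. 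This also works, and in fact it yields the whole proposition, not just non-emptiness: since $\Sp(T)\subseteq\Sp_B(T)$, the boundary inclusion forces $\max|\Sp(T)|=\max|\Sp_B(T)|$, and \.{Z}elazko's Gelfand theory for commutative $p$-normed algebras evaluates the latter as $\lim_n\|T^n\|^{1/n}$. Second, the one thing you should not lean on is the ``analytic input'' that an $A$-valued function analytic on a disc is the sum there of a power series reaching the boundary. For quasi-Banach-valued functions with $p<1$ this is precisely the kind of statement that fails in general (Kalton's work on analytic functions in non-locally convex spaces shows that Taylor expansions need not converge on the full disc of analyticity and that the maximum modulus principle can break down); you correctly diagnose that Liouville and Cauchy integrals are unavailable, but the replacement you reach for is not a black box independent of the algebra structure. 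In \.{Z}elazko's development the convergence of the resolvent series out to the spectral radius is a \emph{consequence} of the seminorm $r$ on the commutative subalgebra, not of a general vector-valued analyticity principle, so the logical order is the reverse of the one you propose. Since your Gelfand-theoretic leg already delivers both non-emptiness and the spectral radius formula, the cleanest fix is simply to discard the analyticity leg.
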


This is due to Zelazko \cite{Zelazko1961}.  We point out that the key ideas in the proof involve a reduction to the Banach algebra case.  One starts with the fact (\cite{Zelazko1961}) that on a commutative quasi-Banach algebra the formula $r(x)=\lim_{n\to\infty}\|T^n\|^{1/n}$ defines a seminorm. Using this one can prove the Gelfand-Mazur theorem (see e.g. \cite{KaltonPeckRoberts1984}) in this context and develop the basic theory of commutative quasi-Banach algebras.  The Proposition is obtained by looking at the double commutant of $T.$

\begin{Prop}\label{clopen}  Let $X$ be a complex quasi-Banach space and let $\mathcal G_1$ denote the subset of $\mathcal L(X)$ consisting of all isomorphic embeddings and $\mathcal G_2$ be the collection of all surjections.  Then $\mathcal G_1$ and $\mathcal G_2$ are both open sets and $\mathcal G_1\cap\mathcal G_2$ is a clopen subset relative to $\mathcal G_1$ and relative to $\mathcal G_2.$\end{Prop}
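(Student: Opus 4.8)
The plan is to establish the three assertions — openness of $\mathcal G_1$, openness of $\mathcal G_2$, and the clopen property of $\mathcal G_1\cap\mathcal G_2$ — separately, the first two being routine perturbation arguments and the last requiring an index (or perturbation-of-invertibility) argument adapted to the quasi-Banach setting.
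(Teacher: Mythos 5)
Your submission is a table of contents, not a proof: it names the three things to be shown and labels the methods (``routine perturbation arguments,'' ``an index argument adapted to the quasi-Banach setting'') without carrying out any of them. For the record, the paper does not prove this proposition either --- it simply cites Kalton--Peck--Roberts, \emph{An $F$-space sampler}, pp.~132--134 --- so the expectation for a blind proof is that you actually supply the arguments. The two openness claims are indeed short once written: if $\|Tx\|\ge c\|x\|$ and the quasi-norm is $p$-subadditive, then $\|(T+S)x\|^p\ge (c^p-\|S\|^p)\|x\|^p$, so $\mathcal G_1$ is open; and if $T$ is onto, the open mapping theorem gives $T(B_X)\supseteq \delta B_X$, after which a successive-approximation iteration shows $T+S$ is onto for $\|S\|$ small, so $\mathcal G_2$ is open. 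Neither estimate appears in your text.

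The substantive gap is the clopen assertion, and invoking ``an index argument'' is precisely the step you cannot leave as a black box here. The standard Banach-space index argument rests on complementing $\ker T$ or a finite-codimensional range, and the whole reason this proposition needs care (as the authors stress in \S\ref{strictly}) is that closed, even finite-dimensional, subspaces of a quasi-Banach space need not be complemented. What one actually needs is a complementation-free argument for relative closedness of $\mathcal G_1\cap\mathcal G_2$ (the invertibles): for instance, if invertible $T_n\to T$ with $T\in\mathcal G_1$ bounded below by $c$, then $T_n$ is bounded below by $c_n=(c^p-\|T_n-T\|^p)^{1/p}$, hence $\|T_n^{-1}\|\le 1/c_n$; letting $Q:X\to X/\overline{T(X)}$ be the quotient map one gets $\|Q\|=\|Q(T_n-T)T_n^{-1}\|\le \|T_n-T\|/c_n\to 0$, forcing $Q=0$ and $T$ surjective. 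A dual argument (using the uniform openness constant of the $T_n$ to kill $\ker T$) handles closedness relative to $\mathcal G_2$. Until some such argument is written down, the key claim of the proposition is unproved.
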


See \cite{KaltonPeckRoberts1984} pp. 132-134.

\begin{Prop}\label{GowersMaurey} Let $X$ be an infinite-dimensional complex Banach space and suppose $T:X\to X$ is quasi-nilpotent, i.e. $\text{Sp}(T)=\{0\}.$  Then $T$ cannot be semi-Fredholm.\end{Prop}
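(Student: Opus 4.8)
The plan is to argue by contradiction: assume $T$ is semi-Fredholm, so $\dim\ker T<\infty$ and $T(X)$ is closed. Since $\Sp(T)=\{0\}$, for every $\lambda\neq 0$ the operator $T-\lambda I$ is invertible, and in particular Fredholm of index $0$. First I would upgrade the semi-Fredholm hypothesis on $T$ itself to genuine Fredholmness. The map $\lambda\mapsto T-\lambda I$ is continuous from the connected set $\mathbb{C}$ into the open set $\Phi_+(X)$ of upper semi-Fredholm operators: its value at $\lambda=0$ lies in $\Phi_+(X)$ by assumption, and at every $\lambda\neq 0$ it is invertible, hence also in $\Phi_+(X)$. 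The Fredholm index is locally constant on $\Phi_+(X)$ (classical perturbation theory, the relevant openness being the content of Proposition \ref{clopen}), so it is constant along this connected curve. Its value is $0$ at each $\lambda\neq 0$, hence $\operatorname{ind}(T)=0$; combined with $\dim\ker T<\infty$ this forces $\dim (X/T(X))=\dim\ker T<\infty$, so $T$ is Fredholm of index $0$.

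Next I would convert Fredholmness into a statement about the essential spectrum. By Atkinson's theorem, $T$ Fredholm means its image $\pi(T)$ in the Calkin algebra $\mathcal{L}(X)/\mathcal{K}(X)$ is invertible, i.e. $0$ does not belong to the essential spectrum $\sigma_{\mathrm{ess}}(T)=\Sp(\pi(T))$. On the other hand $\sigma_{\mathrm{ess}}(T)\subseteq \Sp(T)=\{0\}$ always holds, since every invertible operator is Fredholm. Combining the two gives $\sigma_{\mathrm{ess}}(T)=\emptyset$.

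This is the desired contradiction: because $X$ is infinite-dimensional the identity is not compact, so $\mathcal{K}(X)\neq\mathcal{L}(X)$ and the Calkin algebra is a nonzero unital complex Banach algebra. By the nonemptiness of the spectrum in such an algebra (Proposition \ref{spectralradiusformula} applied to the Calkin algebra, or the scalar Gelfand--Mazur argument), $\sigma_{\mathrm{ess}}(T)=\Sp(\pi(T))\neq\emptyset$, which is absurd. Hence $T$ cannot be semi-Fredholm.

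The step I expect to be the main obstacle is the upgrade from semi-Fredholm to Fredholm of index $0$, that is, the local constancy of the index on $\Phi_+(X)$ together with the identification of the constant value via the invertible operators $T-\lambda I$ with $\lambda\neq0$; everything afterwards is a short spectral-theoretic deduction. An equivalent and equally viable finish, avoiding the Calkin algebra, is to observe that $0$ is an isolated point of $\Sp(T)$ and $T$ is Fredholm, so $0$ is a pole of the resolvent; since the Riesz projection associated with the entire spectrum $\{0\}$ is the identity, this forces $T$ to be nilpotent, whence $X=\ker T^{p}$ is finite-dimensional, again a contradiction.
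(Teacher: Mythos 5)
Your argument is correct. Note, however, that the paper does not actually prove this Proposition: it simply cites \cite{GowersMaurey1993}, Lemma 19, and then spends Lemmas \ref{000}--\ref{333} extending the statement to quasi-Banach spaces (where your tools break down, since the dual may be trivial and the Calkin-algebra machinery is unavailable). Your proof is the standard Banach-space argument and is essentially the one behind the cited lemma: local constancy of the index on the upper semi-Fredholm operators upgrades $T$ to Fredholm of index $0$, Atkinson's theorem then empties the essential spectrum, and nonemptiness of the spectrum in the nonzero unital Banach algebra $\mathcal L(X)/\mathcal K(X)$ gives the contradiction; the alternative finish via the Riesz projection at the isolated point $0$ is equally valid. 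One small imprecision: Proposition \ref{clopen} only asserts that the embeddings and the surjections form open sets and that their intersection is relatively clopen; it does not by itself give the openness of $\Phi_+(X)$ together with the local constancy of the index, which is the genuinely nontrivial perturbation-theoretic input of your first step and should be credited to the classical Kato theory (or proved directly) rather than to Proposition \ref{clopen}.
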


See \cite{GowersMaurey1993} Lemma 19.  We will now need to prove this Proposition for a general complex quasi-Banach space.  We do this is in several very simple steps.  Assume throughout that $X$ is an infinite-dimensional complex quasi-Banach space.

\begin{Lem}\label{000}  Suppose $T:X\to X$ is any bounded operator and $\lambda\in\partial\text{Sp}(T).$  Then $T-\lambda I$ can be neither an isomorphic embedding nor a surjection.\end{Lem}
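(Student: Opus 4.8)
===

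The plan is to show that if $\lambda \in \partial \Sp(T)$, then $T - \lambda I$ fails to be bounded below and also fails to be surjective, by exploiting Proposition \ref{clopen} together with the elementary fact that points on the boundary of the spectrum are limits of points in the resolvent set. Without loss of generality we may assume $\lambda = 0$, replacing $T$ by $T - \lambda I$; so suppose $0 \in \partial \Sp(T)$ and we must rule out that $T$ is an isomorphic embedding or a surjection.

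Since $0 \in \partial \Sp(T)$, there is a sequence $\mu_n \to 0$ with $\mu_n \notin \Sp(T)$, i.e. each $T - \mu_n I$ is invertible; in particular each $T - \mu_n I$ lies in $\mathcal G_1 \cap \mathcal G_2$ (it is both an isomorphic embedding and a surjection). Note that $T - \mu_n I \to T$ in the operator quasi-norm, since $\|(T - \mu_n I) - T\| = |\mu_n|\,\|I\| \to 0$.

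The main step is now to apply the clopen-ness in Proposition \ref{clopen}. Suppose toward a contradiction that $T$ is an isomorphic embedding, i.e. $T \in \mathcal G_1$. Because $\mathcal G_1$ is open and $T - \mu_n I \to T$, eventually $T - \mu_n I \in \mathcal G_1$ — which we already know. But since $\mathcal G_1 \cap \mathcal G_2$ is clopen relative to $\mathcal G_1$, and each $T - \mu_n I$ belongs to $\mathcal G_1 \cap \mathcal G_2$ while converging to $T$ inside the open set $\mathcal G_1$, the limit $T$ must also lie in the relatively closed set $\mathcal G_1 \cap \mathcal G_2$. Hence $T \in \mathcal G_2$ as well, so $T$ is invertible, contradicting $0 \in \Sp(T)$. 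The same argument with the roles of $\mathcal G_1$ and $\mathcal G_2$ interchanged rules out $T \in \mathcal G_2$: if $T$ were a surjection, the sequence $T - \mu_n I \in \mathcal G_1 \cap \mathcal G_2$ converging to $T$ inside the open set $\mathcal G_2$ would force $T$ into the relatively closed subset $\mathcal G_1 \cap \mathcal G_2$, again making $T$ invertible and contradicting $0 \in \Sp(T)$.

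The only delicate point I anticipate is making sure the relative clopen statement is being used in the correct direction: the set $\mathcal G_1 \cap \mathcal G_2$ is \emph{relatively closed} in $\mathcal G_1$, so a sequence lying in $\mathcal G_1 \cap \mathcal G_2$ and converging to a point of $\mathcal G_1$ has its limit in $\mathcal G_1 \cap \mathcal G_2$; this is exactly the relative closedness, and the convergence $T - \mu_n I \to T$ in the operator quasi-norm is what guarantees we stay in the relevant neighborhood. Everything else is routine once one observes that invertibility is precisely membership in $\mathcal G_1 \cap \mathcal G_2$, which contradicts $0 \in \Sp(T)$.
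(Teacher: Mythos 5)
Your argument is correct and is exactly the paper's intended proof: the paper's own proof of this lemma is the single line ``This follows from Proposition \ref{clopen},'' and your write-up simply fills in the standard details (boundary points of the compact spectrum are limits of resolvent points, invertibility equals membership in $\mathcal G_1\cap\mathcal G_2$, and the relative clopenness forces the limit to be invertible). No discrepancy with the paper.
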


\begin{proof} This follows from Proposition \ref{clopen}.\end{proof}

\begin{Lem}\label{111} Suppose $X$ has trivial dual.  If $T:X\to X$ is quasi-nilpotent then $T$ cannot be Fredholm.\end{Lem}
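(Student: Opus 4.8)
The plan is to reach a contradiction by combining two observations about a hypothetically Fredholm $T$: one coming from quasi-nilpotency via Lemma \ref{000}, and one coming from the triviality of the dual. Quasi-nilpotency will be used only through a one-line appeal to Lemma \ref{000}; no index theory is needed.

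First I would read off from quasi-nilpotency that $T$ is not surjective. Since $\Sp(T)=\{0\}$, the point $0$ is trivially a boundary point of $\Sp(T)$, because a singleton in $\mathbb C$ is its own boundary. Hence Lemma \ref{000}, applied with $\lambda=0$, tells us immediately that $T=T-0\cdot I$ can be neither an isomorphic embedding nor a surjection. In particular $T$ is not surjective.

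Second, I would show that if $T$ were Fredholm then the hypothesis $X^*=\{0\}$ forces $T$ to be surjective, contradicting the previous step. Being Fredholm, $T$ has closed range, so the quotient $X/T(X)$ is a Hausdorff quasi-Banach space, and by definition it is finite-dimensional. If $T(X)\neq X$, then $X/T(X)$ is a nonzero finite-dimensional Hausdorff topological vector space, hence linearly homeomorphic to some $\mathbb C^k$ with $k\geq 1$, and therefore carries a nonzero continuous linear functional $\phi$. Composing $\phi$ with the continuous quotient map $q\colon X\to X/T(X)$ produces a nonzero element $\phi\circ q$ of $X^*$, contradicting the assumption that $X$ has trivial dual. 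Thus $T(X)=X$, i.e. $T$ is surjective, which is exactly what Lemma \ref{000} forbade. This contradiction shows $T$ cannot be Fredholm.

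The only point that needs any care is the second step, where one must know that a nonzero finite-dimensional Hausdorff quotient of $X$ really does supply a nonzero continuous functional on $X$; this rests on the closedness of $T(X)$ (so that the quotient is Hausdorff) together with the standard fact that all Hausdorff vector topologies on a finite-dimensional space coincide with the Euclidean one. It is precisely here that the two hypotheses dovetail: finiteness of the cokernel is matched against triviality of the dual, while quasi-nilpotency enters only through the application of Lemma \ref{000}.
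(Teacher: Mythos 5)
Your proof is correct and follows the same route as the paper: the paper's own argument is the two-line observation that trivial dual forces a Fredholm operator to be onto, while Lemma \ref{000} applied at $0\in\partial\Sp(T)$ forbids surjectivity. You have merely spelled out the details (the Hausdorff finite-dimensional quotient supplying a nonzero functional) that the paper leaves implicit.
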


\begin{proof} If $T(X)$ has finite codimension in $X$ then $T$ is onto in this case.  We then use Lemma \ref{000}.\end{proof}

\begin{Lem} \label{222} If $X$ is any infinite-dimensional complex quasi-Banach space and $T:X\to X$ is quasi-nilpotent  then $T$ cannot be Fredholm.\end{Lem}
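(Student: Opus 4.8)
The plan is to argue by contradiction and reduce this general quasi-Banach statement to the two cases already disposed of: the Banach case (Proposition \ref{GowersMaurey}) and the trivial-dual case (Lemma \ref{111}). So suppose $T$ is both Fredholm and quasi-nilpotent. The object I would exploit is the dual $X^*$, which is a genuine Banach space even though $X$ is merely quasi-Banach, together with the adjoint $T^*:X^*\to X^*$; since $(T^*)^n=(T^n)^*$ and $\|(T^*)^n\|\le\|T^n\|$, the spectral radius formula (Proposition \ref{spectralradiusformula}) shows that $T^*$ is again quasi-nilpotent. I would then split into three cases according to $\dim X^*$.

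If $X^*=\{0\}$ we are done by Lemma \ref{111}. If $\dim X^*=\infty$, I would show that $T^*$ is Fredholm on the infinite-dimensional Banach space $X^*$, which contradicts Proposition \ref{GowersMaurey}. Indeed $\ker T^*=T(X)^\perp\cong(X/T(X))^*$ is finite-dimensional because $T(X)$ is closed of finite codimension; and one identifies $T^*(X^*)=(\ker T)^\perp$ by descending a functional that kills $\ker T$ through the isomorphism $X/\ker T\cong T(X)$ (open mapping theorem) and then extending the resulting functional from $T(X)$ back to $X$. As $(\ker T)^\perp$ is closed with $\operatorname{codim}\,(\ker T)^\perp\le\dim\ker T<\infty$, the operator $T^*$ is Fredholm, in particular semi-Fredholm.

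The remaining case $0<\dim X^*<\infty$ I would reduce to Lemma \ref{111}. Put $N=\bigcap_{\phi\in X^*}\ker\phi$; this subspace is closed and $T$-invariant (since $\phi(Tx)=(T^*\phi)(x)=0$ for $x\in N$), it is infinite-dimensional, and it has finite codimension because $x\mapsto(\phi(x))_\phi$ embeds $X/N$ into $\mathbb{C}^{\dim X^*}$. The restriction $T|_N$ is quasi-nilpotent, and it is Fredholm: its kernel lies in $\ker T$, its range is closed by the remark that the restriction of a semi-Fredholm operator to an infinite-dimensional closed subspace stays semi-Fredholm, and that range has finite codimension in $N$. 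Since $N^*=\{0\}$, Lemma \ref{111} applied to $T|_N$ produces the contradiction.

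I expect the only real work—and the point at which the quasi-Banach setting bites—to be the complementation bookkeeping underlying the extension of functionals in the second case and the equality $N^*=\{0\}$ in the third. Both rest on the fact that a closed subspace of finite codimension is automatically complemented: the quotient onto the finite-dimensional space $X/N$ (resp. $X/T(X)$) splits because any linear section out of a finite-dimensional space is continuous. Granting a continuous projection $P$ onto $N$, every $\psi\in N^*$ extends to $\psi\circ P\in X^*$, which annihilates $N$ and hence is $0$, forcing $\psi=0$; the same device furnishes the extensions needed to pin down $T^*(X^*)$. The contrast the authors flag—that a finite-dimensional subspace such as $\ker T$ need not be complemented—enters only harmlessly, through the inequality $\operatorname{codim}\,(\ker T)^\perp\le\dim\ker T$, so that no Hahn--Banach extension out of $\ker T$ is ever required.
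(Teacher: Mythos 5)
Your proof is correct and follows essentially the same route as the paper's: pass to the adjoint $T^*$ on the Banach space $X^*$, use $\|(T^*)^n\|\le\|T^n\|$ with the spectral radius formula and Proposition \ref{GowersMaurey} to force $\dim X^*<\infty$, then reduce to Lemma \ref{111} on the pre-annihilator $N=\{x:\ x^*(x)=0\ \forall x^*\in X^*\}$. Your three-case organization and the explicit complementation bookkeeping merely spell out details the paper leaves implicit.
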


\begin{proof}  Denote by $X^*$ the dual of $X$; this is a Banach space but it can be quite small (even $\{0\}$).  We assume $X^*\neq\{0\}$ as this case is covered in Lemma \ref{111}. Assume $T:X\to X$ is quasi-nilpotent and Fredholm. Then $T^*:X^*\to X^*$ is Fredholm. In fact $T^*(X^*)=\ker (T)^{\perp}$;  this depends on the fact that every continuous linear functional $y^*$ on $T(X)$ can be extended to $x^*\in X^*$ since $\dim X/T(X)<\infty.$ Since $\|(T^*)^n\|\le \|T^n\|$ the spectral radius formula shows that $T^*$ is quasi-nilpotent.  By Proposition \ref{GowersMaurey} we must have $\dim X^*<\infty.$  Let $X_0=\{x\in X:\ x^*(x)=0\ \forall\ x^*\in X^* \}.$   Then $X_0$ is invariant for $T$ and of finite-codimension in $X$.  Clearly $X_0^*=\{0\}$ and $T|_{X_0\to X_0}$ remains Fredholm so we can apply Lemma \ref{111} to get a contradiction.\end{proof}

\begin{Lem}\label{333}  If $X$ is any infinite-dimensional complex quasi-Banach space and $T:X\to X$ is quasi-nilpotent  then $T$ cannot be semi-Fredholm.\end{Lem}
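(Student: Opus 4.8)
I would argue by contradiction, assuming $T$ is semi-Fredholm (finite-dimensional kernel, closed range $R=T(X)$) and quasi-nilpotent. Since $\Sp(T)=\{0\}$ by Proposition \ref{spectralradiusformula}, we have $0\in\partial\,\Sp(T)$, so Lemma \ref{000} already forces $\ker T\neq\{0\}$ and $R\neq X$. If $R$ has finite codimension then $T$ is Fredholm and Lemma \ref{222} gives an immediate contradiction, so the whole problem lies in the case $\mathrm{codim}\,R=\infty$. The guiding principle is classical: as $0$ is isolated in $\Sp(T)$, the operator $T-\lambda I$ is invertible for all small $\lambda\neq 0$, hence Fredholm of index $0$; local constancy of the semi-Fredholm index would then give $\mathrm{index}(T)=0$, i.e. $\mathrm{codim}\,R=\dim\ker T<\infty$, a contradiction. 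The obstacle --- and the point where the Banach-space proof breaks --- is that the perturbation theory of the index rests on splittings $X=\ker T\oplus M$ and $X=R\oplus N$, whereas in a quasi-Banach space a finite-dimensional subspace such as $\ker T$ need not be complemented.

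The plan is to remove the two offending splittings by passing to quotients and to duals rather than to complements. For the kernel: since the sequential characterization of semi-Fredholmness recalled above is stable under composition, each $T^n$ is semi-Fredholm and the generalized kernel is assembled from the finite-dimensional spaces $\ker T^n$; when it is finite-dimensional, quotienting $X$ by $\ker T^{a}$ (with $a$ the ascent) replaces $T$ by an \emph{injective} operator with closed range, that is, an isomorphic embedding, still quasi-nilpotent and still with infinite-codimensional range. Applying Lemma \ref{000} on this quotient then contradicts directly the fact that the induced operator is an embedding; crucially this uses only the quotient map and the clopen dichotomy of Proposition \ref{clopen} as packaged in Lemma \ref{000}, never a complement of $\ker T$.

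For the cokernel I would dualize, splitting on $\dim X^*$ exactly as in Lemma \ref{222}. If $\dim X^*=\infty$ then $X^{**}$ is an infinite-dimensional complex \emph{Banach} space and $T^{**}$ is semi-Fredholm (via the closed-range theorem for the adjoint) and quasi-nilpotent (since $\|(T^{**})^n\|\le\|T^n\|$), so Proposition \ref{GowersMaurey} applies on $X^{**}$. If $\dim X^*<\infty$ then $X_0=\bigcap_{x^*\in X^*}\ker x^*$ is a $T$-invariant closed subspace of finite codimension with trivial dual, on which $T$ stays quasi-nilpotent and semi-Fredholm; since a trivial-dual space has no proper closed finite-codimensional subspace, a finite-codimensional range would force $T|_{X_0}$ onto and contradict Lemma \ref{000}, as in Lemma \ref{111}. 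The step I expect to be genuinely hard is the surviving configuration of an infinite-codimensional range over a trivial-dual space, where both duality and the complemented-subspace index calculus are unavailable; this is exactly where the ascent-quotient reduction to an embedding must be made to carry the argument, and it is the place where the ``additional tricks'' promised after Theorem \ref{semiF} are needed.
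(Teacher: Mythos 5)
Your reduction splits into cases, and the residual case is left genuinely open, so this is not yet a proof. The finite-ascent branch is fine: if the generalized kernel $\bigcup_n\ker T^n$ equals $\ker T^a$ for some finite $a$, then the induced operator on $X/\ker T^a$ is an injective, closed-range (hence isomorphically embedding) quasi-nilpotent operator on an infinite-dimensional quasi-Banach space, and Lemma \ref{000} kills it. But nothing rules out infinite ascent: for a semi-Fredholm $T$ each $\ker T^n$ is finite-dimensional, yet the dimensions may strictly increase forever (as for a backward shift), and you cannot exclude this for a quasi-nilpotent $T$ without in effect already knowing the lemma. Your other branch is also unsound at the step ``$T^{**}$ is semi-Fredholm via the closed-range theorem for the adjoint'': in a quasi-Banach space there is no Hahn--Banach theorem, and the paper's own Lemma \ref{222} only identifies $T^*(X^*)$ with $\ker(T)^{\perp}$ because $\dim X/T(X)<\infty$ permits extension of functionals from $T(X)$ to $X$; in your case $\operatorname{codim}T(X)=\infty$, which is exactly where this breaks. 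So the configuration you yourself flag as ``genuinely hard'' --- infinite-codimensional range, infinite ascent, small dual --- is not handled by either device, and it is the whole content of the lemma.

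The paper closes this case by a different reduction: to the \emph{Fredholm} case rather than to the injective case. By Baire category (each $\ker T^n$ is finite-dimensional, hence closed and nowhere dense in the infinite-dimensional complete space $X$) there is $x$ with $T^nx\neq 0$ for all $n$. Put $Y=[T^nx]_{n=1}^{\infty}$; then $Y$ is $T$-invariant and closed, $T|_Y$ is still semi-Fredholm and quasi-nilpotent, its range contains the dense-in-$[T^nx]_{n\ge 2}$ span of $\{T^nx\}_{n\ge 2}$ and is closed, hence has finite codimension in $Y$, so $T|_Y$ is Fredholm; and $Y$ is infinite-dimensional since otherwise quasi-nilpotence would force $T|_Y$ to be nilpotent and some $T^nx$ to vanish. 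Lemma \ref{222} then gives the contradiction. If you want to salvage your write-up, replace the ascent-quotient and bidual steps by this orbit-restriction argument; the ingredients you already trust (Lemmas \ref{000}--\ref{222}) then suffice.
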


\begin{proof}  Assume $T$ is semi-Fredholm.  Then by a Baire Category argument there exists $x\in X$ so that $T^nx\neq 0$ for every $n \in {\mathbb N}.$  Let $Y=[T^nx]_{n=1}^{\infty}.$  Then $T:Y\to Y$ is Fredholm and remains quasi-nilpotent (using Proposition \ref{spectralradiusformula}).  Clearly $T|_{Y\to Y}$ is not nilpotent so $\dim Y=\infty.$  This is a contradiction by Lemma \ref{222}.
\end{proof}

\begin{proof}[Proof of Theorem \ref{semiF}]  The remaining steps in the proof of Theorem \ref{semiF} are very similar to those in \cite{GowersMaurey1993} for the Banach space case. Assume $T-\lambda I$ is semi-Fredholm for all $\lambda\in\mathbb C.$ We suppose $\lambda\in\partial \text{Sp}(T)$ is an accumulation point of $\partial\text{Sp}(T).$  Let $\lambda_n\to\lambda$ with $\lambda_n\ne\lambda$ and $\lambda_n\in\partial\text{Sp}(T).$  Each $\lambda_n$ is an eigenvalue of $T$ (by Lemma \ref{000}, since $T-\lambda_nI$ is semi-Fredholm), say with eigenvector $x_n.$   Let $Y=[x_n]_{n=1}^{\infty}.$  Then $Y$ is invariant for $T$ and $\lambda\in \partial\text{Sp}(T|_{Y\to Y}).$  However $(T-\lambda I)|_{Y\to Y}$ has dense range and is semi-Fredholm.  Hence $(T-\lambda I)|_{Y\to Y}$ is surjective and we have a contradiction by Lemma \ref{000}.

It follows that $\partial\text{Sp}(T)$ has no accumulation points and hence is a finite set.  Thus $\text{Sp}(T)$ is also finite say $\text{Sp(T)}=\{\lambda_1,\ldots,\lambda_n\}.$  Then $S=\prod_{k=1}^n(T-\lambda_kI)$ is semi-Fredholm and $\text{Sp}(S)=\{0\}.$  This contradicts Lemma \ref{333}.\end{proof}

\begin{Thm} \label{ss1} Let $X$ be an infinite-dimensional complex quasi-Banach space. If $T:X\to X$ is strictly singular then $T$ cannot be semi-Fredholm.\end{Thm}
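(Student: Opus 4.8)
The plan is to deduce Theorem \ref{ss1} from Theorem \ref{semiF} by a spectral/perturbation argument, reusing the work already invested in the quasi-nilpotent lemmas. Suppose, for a contradiction, that $T:X\to X$ is both strictly singular and semi-Fredholm. I would first record the factorization $T-\lambda I=-\lambda(I-\lambda^{-1}T)$ for $\lambda\neq 0$, and isolate the following general claim, which makes \emph{no} reference to the semi-Fredholmness of $T$:

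\emph{Claim.} If $S:X\to X$ is strictly singular, then $I-S$ is semi-Fredholm.

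Granting the Claim the argument closes at once: for every $\lambda\neq 0$ the operator $\lambda^{-1}T$ is strictly singular, so $I-\lambda^{-1}T$, and hence $T-\lambda I$, is semi-Fredholm; and for $\lambda=0$ the operator $T-0\cdot I=T$ is semi-Fredholm by our standing assumption. Thus $T-\lambda I$ would be semi-Fredholm for \emph{every} $\lambda\in\mathbb C$, directly contradicting Theorem \ref{semiF}. There is no circularity here, since the Claim concerns an arbitrary strictly singular $S$ and does not presuppose Theorem \ref{ss1}.

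It remains to prove the Claim, and I would verify the two defining properties of an (upper) semi-Fredholm operator separately. First, $\ker(I-S)$ is finite-dimensional: on that subspace $Sx=x$, so $S$ is an isomorphic embedding there, which by strict singularity forces $\dim\ker(I-S)<\infty$. Second, $(I-S)(X)$ must be shown to be closed. Writing $K'=\ker(I-S)$, it suffices to prove that $I-S$ is bounded below modulo $K'$, i.e. that $\|(I-S)x\|\geq c\,d(x,K')$ for some $c>0$. If this failed, one could select a bounded sequence $(x_n)$ with $d(x_n,K')=1$ and $\|x_n-Sx_n\|\to 0$; such a sequence admits no convergent subsequence, since any limit would lie in $K'$ while remaining at distance $1$ from it, so after passing to a subsequence $(x_n)$ is $\delta$-separated for some $\delta>0$ and still satisfies $x_n-Sx_n\to 0$.

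The \textbf{main obstacle} is the final step: from a bounded, $\delta$-separated sequence $(x_n)$ on which $S$ acts asymptotically like the identity, one must extract an infinite-dimensional subspace on which $S$ is bounded below, which contradicts strict singularity. In a Banach space this is routine—pass to a basic subsequence and treat $S$ as a small perturbation of $I$—but this is exactly where the quasi-Banach setting bites, since a separated sequence need not admit a basic subsequence and finite-dimensional subspaces need not be complemented. I would resolve it by adapting the quasi-Banach devices already used in Lemmas \ref{111} and \ref{222}: control $S$ modulo the finite-dimensional $K'$ through the open mapping theorem, and, in the problematic case where the dual is too small to separate $K'$, pass to the finite-codimensional invariant subspace $X_0=\{x:\ x^*(x)=0\ \forall\,x^*\in X^*\}$, on which the dual is trivial, in order to manufacture the desired bounded-below subspace. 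Once the Claim is secured, the theorem follows as above.
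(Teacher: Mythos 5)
Your reduction is exactly the one the paper uses: Theorem \ref{ss1} follows from Theorem \ref{semiF} once one knows that $T-\lambda I$ is semi-Fredholm for every $\lambda\neq 0$ whenever $T$ is strictly singular. Your ``Claim'' is precisely that statement (applied to $S=\lambda^{-1}T$), and the paper does not prove it but cites it as Theorem 7.10 of \cite{KaltonPeckRoberts1984}. So if you are content to quote that result, your argument is complete and identical to the paper's two-line proof.

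As a self-contained argument, however, your proof of the Claim has a genuine gap at exactly the point you flag. The finite-dimensionality of $\ker(I-S)$ and the reduction of closed range to a lower bound modulo $K'=\ker(I-S)$ are fine, and you correctly extract a bounded, $\delta$-separated sequence $(x_n)$ with $d(x_n,K')=1$ and $\|x_n-Sx_n\|\to 0$. But the repair you propose for the last step does not work: the devices of Lemmas \ref{111} and \ref{222} (trivial dual, passage to $X_0=\{x:x^*(x)=0\ \forall x^*\in X^*\}$) are about duality and quasi-nilpotence and give you no way to produce an infinite-dimensional subspace on which $S$ is bounded below. The tool that actually closes this step in the quasi-Banach setting is Theorem 4.6 of \cite{KaltonPeckRoberts1984}, which the paper itself invokes in the proof of Theorem \ref{semiFss} for the analogous difficulty: a bounded sequence with no convergent subsequence has a subsequence satisfying $\max_k|a_k|\le C\|\sum_k a_kx_k\|$. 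Passing to a further subsequence with $\sum_n\|x_n-Sx_n\|^p$ small, one gets for $v=\sum a_nx_n$ that $\|Sv-v\|^p\le \max_n|a_n|^p\sum_n\|x_n-Sx_n\|^p\le (C^p\sum_n\|x_n-Sx_n\|^p)\|v\|^p$, so $S$ is bounded below on the infinite-dimensional space $[x_n]$, contradicting strict singularity. With that substitution your Claim, and hence the theorem, is proved.
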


{\it Remark.} Note that this is false for operators $T:X\to Y$ by the remarks above.

\begin{proof}  In fact $T-\lambda I$ is always semi-Fredholm if $\lambda\neq 0$ (Theorem 7.10 of \cite{KaltonPeckRoberts1984}).  The result follows from Theorem \ref{semiF}.\end{proof}

\begin{Thm}\label{semiFss}  Let $X$ be an infinite-dimensional complex quasi-Banach space.  If $T:X\to X$ is a bounded linear operator then exactly one of the following two conditions holds:
\newline (i) For every $\epsilon>0$ there is an infinite-dimensional closed subspace $V$ of $X$ such that $\|T|_V\|<\epsilon.$
\newline (ii) $T$ is semi-Fredholm.

If $X$ is HI then (i) is equivalent to
\newline (i)$^{\prime}$ $T$ is strictly singular.
\end{Thm}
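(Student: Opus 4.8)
The plan is to prove the two assertions separately: first the dichotomy that exactly one of (i), (ii) holds for an arbitrary bounded $T$, and then, under the HI hypothesis, the equivalence of (i) with (i)$'$. For the dichotomy it is enough to prove that (i) and (ii) are mutually exclusive and that at least one of them always holds. For mutual exclusivity I would show $(\mathrm{ii})\Rightarrow\neg(\mathrm{i})$ (the contrapositive gives $(\mathrm{i})\Rightarrow\neg(\mathrm{ii})$ for free). If $T$ is semi-Fredholm then $\ker T$ is finite-dimensional and the induced injection $X/\ker T\to X$ is bounded below, so there is $c>0$ with $\|Tx\|\ge c\,\mathrm{dist}(x,\ker T)$ for all $x$. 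Given any infinite-dimensional closed $V$, the quotient map restricted to $V$ has finite-dimensional kernel, so its image in $X/\ker T$ is infinite-dimensional; lifting a quotient-unit vector gives, after normalizing, a unit $x\in V$ with $\mathrm{dist}(x,\ker T)\ge\tfrac12$, whence $\|T|_V\|\ge c/2$. Thus no infinite-dimensional subspace can have $\|T|_V\|<c/2$ and (i) fails.

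The substantive direction is $\neg(\mathrm{ii})\Rightarrow(\mathrm{i})$. If $\ker T$ is infinite-dimensional take $V\subseteq\ker T$, where $\|T|_V\|=0$. Otherwise $\ker T$ is finite-dimensional and $T(X)$ is not closed, so the induced injection is not bounded below; I obtain a seminormalized sequence $(x_n)$ with $\mathrm{dist}(x_n,\ker T)=1$ and $\|Tx_n\|\to0$. Any subsequential limit would lie in $\ker T$ yet remain at distance $1$ from it, so $(x_n)$ has no convergent subsequence. The goal is a \emph{regular basic} subsequence $(z_k)$ with $\sum_k\|Tz_k\|^p$ as small as desired; granting this, $p$-subadditivity together with the coordinate bound $|a_k|\le C\|\sum_j a_jz_j\|$ from the basis constant yields, for $x=\sum_k a_kz_k$ in $V=\overline{[z_k]}$,
\[
\|Tx\|^p\le\sum_k|a_k|^p\,\|Tz_k\|^p\le C^p\Big(\sum_k\|Tz_k\|^p\Big)\|x\|^p,
\]
so that $\|T|_V\|<\epsilon$, which is (i).

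I expect the extraction of the basic subsequence to be the main obstacle, since the quasi-Banach setting genuinely obstructs it: finite-dimensional subspaces need not be complemented, and a seminormalized sequence with no convergent subsequence need not admit any basic subsequence at all. My route around this is to observe that $W=\overline{[x_n]}$ is \emph{non-minimal}: the assignment $x\mapsto\|Tx\|$ (a continuous quasi-seminorm, and a quasi-norm once the finite-dimensional $\ker(T|_W)\subseteq\ker T$ is quotiented out) defines a strictly weaker Hausdorff topology, because $\|Tx_n\|\to0$ while $(x_n)$ is seminormalized. Proposition~\ref{existencebasicsequences} then supplies a regular basic sequence in $W$, and the construction behind it produces one that is null for the weaker topology, i.e.\ with $\|Tz_k\|\to0$; a diagonal refinement makes $\sum_k\|Tz_k\|^p$ summably small, completing the argument above. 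This is the point where the quasi-Banach proof diverges from the Banach case and where care is required.

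Finally, assume $X$ is HI and prove $(\mathrm{i})\Leftrightarrow(\mathrm{i})'$. That (i)$'$ implies (i) is immediate: a strictly singular $T$ is not semi-Fredholm by Theorem~\ref{ss1}, so (i) holds by the dichotomy just established. For the converse, suppose (i) holds but $T$ is not strictly singular, so $\|Tx\|\ge c\|x\|$ on some infinite-dimensional $V_0$, while (i) furnishes an infinite-dimensional $V_1$ with $\|T|_{V_1}\|<c/2$. Since $X$ is HI, $V_0$ and $V_1$ are not separated. If $V_0\cap V_1\neq\{0\}$, a nonzero common vector satisfies both $\|Tx\|\ge c\|x\|$ and $\|Tx\|\le(c/2)\|x\|$, a contradiction. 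If instead $V_0\cap V_1=\{0\}$ while $V_0+V_1$ is not closed, the projection onto $V_0$ is unbounded, giving $u_n\in V_0$, $w_n\in V_1$ with $\|u_n\|=1$ and $\|u_n+w_n\|\to0$; then $\|w_n\|\to1$, and by $p$-subadditivity
\[
\|Tu_n\|^p\le\|T(u_n+w_n)\|^p+\|Tw_n\|^p\le\|T(u_n+w_n)\|^p+(c/2)^p\|w_n\|^p\longrightarrow(c/2)^p,
\]
contradicting $\|Tu_n\|\ge c$. Hence (i) implies (i)$'$, which finishes the proof.
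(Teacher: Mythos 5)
Your overall architecture is sound, and the HI part is essentially the paper's own argument: the paper runs the same $p$-subadditivity computation to show that (i) together with ``not strictly singular'' would produce $V\cap W=\{0\}$ with a \emph{bounded} projection, contradicting HI, while you phrase the same estimate contrapositively via an unbounded projection. The genuine divergence is in $\neg(\mathrm{ii})\Rightarrow(\mathrm{i})$. The paper factors $T=T_0Q$ through $X/\ker T$, takes normalized $\xi_n$ with $\|T_0\xi_n\|<2^{-n}$, and invokes Theorem 4.6 of \cite{KaltonPeckRoberts1984} to pass to a subsequence satisfying only the lower estimate $\max_k|a_k|\le C\|\sum_k a_k\xi_k\|$ --- deliberately avoiding basic sequences, which need not exist in a quasi-Banach space --- and then sets $V_k=Q^{-1}[\xi_j]_{j\ge k}$. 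You instead manufacture a regular basic sequence from non-minimality via Proposition \ref{existencebasicsequences}. This can be made to work, with two caveats. First, $x\mapsto\|Tx\|$ is not Hausdorff on $W=\overline{[x_n]}$ when $W\cap\ker T\neq\{0\}$, and since finite-dimensional subspaces need not be complemented you cannot quotient it out and then lift the basic sequence back; the clean fix is to run the non-minimality argument in $X/\ker T$ itself (where $\|T_0\cdot\|$ \emph{is} a strictly weaker Hausdorff quasi-norm topology on $\overline{[\xi_n]}$) and define $V$ as the $Q$-preimage of the closed span of a tail, exactly as the paper does. Second, Proposition \ref{existencebasicsequences} as stated only yields \emph{some} regular basic sequence; you need the stronger fact from \cite{Kalton1974a} that it can be chosen null for the weaker topology, which is true of the construction but goes beyond the quoted statement. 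Once you have it, your coordinate-functional estimate is exactly what the paper extracts in one step from KPR Theorem 4.6, so the two routes buy the same thing; the paper's is shorter, yours is more self-explanatory about \emph{why} the subspace $V$ exists.

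One step is wrong as written: in proving $(\mathrm{ii})\Rightarrow\neg(\mathrm{i})$ you claim that lifting a unit vector of $Q(V)\subset X/\ker T$ and normalizing gives a unit $x\in V$ with $\operatorname{dist}(x,\ker T)\ge\tfrac12$. A lift $x\in V$ of a unit coset satisfies $\operatorname{dist}(x,\ker T)=1$, but its norm is only bounded below, not above: the lifts lying in $V$ form a coset of $V\cap\ker T$, not of $\ker T$, so you cannot select one of norm at most $2$, and after normalization the distance to $\ker T$ can be arbitrarily small. What you need is the Riesz-type fact that an infinite-dimensional closed subspace contains a unit vector whose distance to a fixed finite-dimensional subspace is bounded below; the paper proves precisely this by taking a $\tfrac12$-separated sequence in the unit ball of $V$ and a compactness argument in $\ker T$. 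With that substitution your exclusivity argument is correct.
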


\begin{proof}  Assume (ii).  Then there is a constant $c>0$ so that $\|Tx\|\ge cd(x,F)$ for $x\in X,$ where $F=\ker T.$ If $V$ is an infinite-dimensional closed subspace we can find a sequence $(v_n)_{n=1}^{\infty}$ in the unit ball so
$ \|v_m-v_n\|\ge 1/2$ for $m\neq n.$  Assuming that the norm is $p-$convex, by a simple compactness argument we can then show the existence of a pair $m\neq n$ so that $ (d(v_m,F)^p+d(v_n,F)^p)^{1/p}\ge 1/4.$   Hence $\|Tv_m\|^p+\|Tv_n\|^p \ge (1/4)^p.c^p$  This implies a lower bound on $\|T|_V\|.$

Now assume (ii) fails and that $F=\text{ker}(T)$ is finite dimensional.  Then $T$ factors in the form $T=T_0Q$ where $Q:X\to X/F$ is the quotient map and $T_0:X/F\to X$ is one-one but not an isomorphic embedding.  Then there is a normalized sequence $\xi_n\in X/F$ so that $\|T_0\xi_n\|<2^{-n}.$  Now using Theorem 4.6 of \cite{KaltonPeckRoberts1984} we can assume by passing to a subsequence that $(\xi_n)_{n=1}^{\infty}$ satisfies an estimate
$$ \max_{1\le k\le n}|a_k|\le C\|\sum_{k=1}^n a_k\xi_k\|, \qquad a_1,\ldots,a_n\in\mathbb C.$$
In particular if $V_k=Q^{-1}[\xi_j]_{j\ge k}$ then each $V_k$ is infinite-dimensional and $\|T|_{V_k}\|\to 0.$  Thus (i) holds.

Now assume $X$ is HI.  Suppose $T$ satisfies (i) and is not strictly singular.  Then there is an infinite-dimensional subspace $W$ so that $\|Tw\|\ge \delta\|w\|$ for $w\in W$ where $\delta>0.$  Pick $\epsilon=\delta/2$ and then choose $V$ as in (i) for this $\epsilon.$  Clearly $V\cap W=\{0\}.$

Now assume $v\in V,w\in W$ with $\|v+w\|=1.$  Then
\begin{align*}
\|v\|^p&\le 1+\|w\|^p\\
&\le 1+2^{-p}\|v\|^p+\|w\|^p-2^{-p}\|v\|^p\\
&\le 1+2^{-p}\|v\|^p+ \delta^{-p}\|Tw\|^p-2^{-p}\epsilon^{-p}\|Tv\|^p\\
&= 1+2^{-p}\|v\|^p +\delta^{-p}(\|Tw\|^p-\|Tv\|^p)\\
&\le 1+2^{-p}\|v\|^p +\delta^{-p}\|T(v+w)\|^p\\
&\le 1+2^{-p}\|v\|^p +\delta^{-p}\|T\|^p.\end{align*}  Thus
$$ \|v\| \le \left(\frac{1+\delta^{-p}\|T\|^p}{1-2^{-p}}\right)^{1/p}.$$
This contradicts the fact that $X$ is HI.

Conversely if $T$ is strictly singular it cannot be semi-Fredholm by Theorem \ref{ss1} and so (i) must hold.
\end{proof}

 \begin{Thm}\label{ss2}  Let $X$ be an infinite-dimensional complex quasi-Banach space.  If $X$ is HI then every bounded linear operator $T:X\to X$ is of the form $T=\lambda I +S$ where $S$ is strictly singular.\end{Thm}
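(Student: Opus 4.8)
The plan is to deduce Theorem \ref{ss2} as a direct consequence of the spectral result Theorem \ref{semiF} combined with the HI-dichotomy contained in Theorem \ref{semiFss}. The essential observation is that these two theorems together give everything we need: Theorem \ref{semiF} locates a scalar $\lambda$ for which $T-\lambda I$ fails to be semi-Fredholm, and on an HI space the failure of the semi-Fredholm property is precisely equivalent to strict singularity by the equivalence (i)$\Leftrightarrow$(i)$'$ in Theorem \ref{semiFss}.

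First I would invoke Theorem \ref{semiF}: since $X$ is an infinite-dimensional complex quasi-Banach space, there exists $\lambda\in\mathbb C$ such that $T-\lambda I$ is not semi-Fredholm. Set $S=T-\lambda I$, so that $T=\lambda I+S$ and it remains only to show that $S$ is strictly singular. Here I would appeal to the full strength of Theorem \ref{semiFss}, whose dichotomy asserts that for any bounded operator exactly one of conditions (i) and (ii) holds, and whose final clause asserts that when $X$ is HI condition (i) is equivalent to strict singularity. Since $S$ is not semi-Fredholm, condition (ii) fails for $S$, so by the exclusivity of the dichotomy condition (i) must hold for $S$. Because $X$ is HI, condition (i) for $S$ is equivalent to $S$ being strictly singular, and we conclude that $S$ is strictly singular, completing the proof.

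I do not anticipate a genuine obstacle in this final step, since essentially all of the work has already been carried out in establishing Theorems \ref{semiF} and \ref{semiFss}; the argument here is a short assembly of those results. The only point deserving a word of care is the logical bookkeeping of the dichotomy: one must note that ``not semi-Fredholm'' forces (i) rather than merely being consistent with it, which is guaranteed by the word \emph{exactly} in the statement of Theorem \ref{semiFss}. In writing this up I would keep the proof to a few lines, simply naming $S=T-\lambda I$ and citing the two theorems in sequence.
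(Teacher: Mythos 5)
Your proposal is correct and is essentially identical to the paper's own two-line proof: the paper likewise invokes Theorem \ref{semiF} to find $\lambda$ with $T-\lambda I$ not semi-Fredholm and then cites Theorem \ref{semiFss} to conclude strict singularity. Your added remark about the exclusivity of the dichotomy forcing (i) is exactly the right bookkeeping point.
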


 \begin{proof} There exists $\lambda$ so that $T-\lambda I$ is not semi-Fredholm by Theorem \ref{semiF}.  By Theorem \ref{semiFss} this means $T-\lambda I$ is strictly singular.
 \end{proof}

 In the case when $X$ is SHI this result is much simpler.  Indeed we have:

 \begin{Thm}\label{SHI}  Let $X$ be an SHI space and suppose $E$ is the intersection of all infinite-dimensional subspaces of $X$. Let $Q:X\to X/E$ be the quotient map (which is strictly singular). Then if $T:X\to X$ is a bounded operator, there exists $\lambda\in\mathbb C$ and a bounded operator $S:X/E\to X$ so that $T=\lambda I+SQ.$\end{Thm}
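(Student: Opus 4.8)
The plan is to show that the entire statement reduces to producing a single scalar $\lambda$ for which $T-\lambda I$ annihilates $E$; once this is achieved, the factorization through $Q$ is purely formal. Suppose we have found $\lambda\in\mathbb C$ with $(T-\lambda I)|_E=0$. Since $E=\ker Q$, the operator $T-\lambda I$ is then constant on each coset of $E$, so the prescription $S(Qx)=(T-\lambda I)x$ unambiguously defines a linear map $S:X/E\to X$ with $SQ=T-\lambda I$, i.e. $T=\lambda I+SQ$. To see that $S$ is bounded, fix $\xi\in X/E$ and a representative $x$ with $Qx=\xi$; for every $y\in E$ we have $S\xi=(T-\lambda I)x=(T-\lambda I)(x+y)$, whence $\|S\xi\|\le\|T-\lambda I\|\,\|x+y\|$, and taking the infimum over $y\in E$ gives $\|S\xi\|\le\|T-\lambda I\|\,\|\xi\|_{X/E}$. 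Thus $\|S\|\le\|T-\lambda I\|<\infty$, so the boundedness poses no difficulty in the quasi-Banach setting.

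It therefore remains only to find $\lambda$ with $(T-\lambda I)|_E=0$. First I would apply Theorem \ref{semiF} to obtain $\lambda\in\mathbb C$ for which $T-\lambda I$ is \emph{not} semi-Fredholm. Writing $S_0=T-\lambda I$ and invoking Theorem \ref{semiFss} (whose alternatives (i) and (ii) are mutually exclusive for any bounded operator on an infinite-dimensional complex quasi-Banach space), the failure of (ii) forces alternative (i): for every $\epsilon>0$ there is an infinite-dimensional closed subspace $V_\epsilon$ of $X$ with $\|S_0|_{V_\epsilon}\|<\epsilon$. Note that this step uses only the raw dichotomy, not the stronger equivalence with strict singularity, so the hypothesis that $X$ is merely SHI (and a fortiori HI) is enough.

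The key step, and the only place where the SHI hypothesis enters, is now immediate. By the definition of $E$ as the intersection of all infinite-dimensional closed subspaces of $X$, we have $E\subseteq V_\epsilon$ for every $\epsilon$. Hence for any $x\in E$ and any $\epsilon>0$ the vector $x$ lies in $V_\epsilon$, so $\|S_0x\|\le\|S_0|_{V_\epsilon}\|\,\|x\|<\epsilon\|x\|$; letting $\epsilon\downarrow 0$ yields $S_0x=0$. Thus $(T-\lambda I)|_E=S_0|_E=0$, which is exactly what the first paragraph requires, and the proof is complete. I do not expect a genuine obstacle here: the argument is a clean confrontation between the SHI property (a fixed subspace $E$ lying inside \emph{every} infinite-dimensional closed subspace) and alternative (i) of Theorem \ref{semiFss} (subspaces on which $T-\lambda I$ is arbitrarily small). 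The only points needing any care are the bookkeeping of the quotient quasi-norm in the boundedness estimate for $S$, and the observation that the scalar $\lambda$ supplied by Theorem \ref{semiF} is automatically the right one, since condition (i) localizes the smallness of $T-\lambda I$ precisely onto the subspaces that all contain $E$.
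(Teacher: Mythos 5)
Your proof is correct. The reduction of the whole statement to producing one $\lambda$ with $(T-\lambda I)|_E=0$, the formal factorization through $Q$ together with the quotient-quasi-norm estimate $\|S\|\le\|T-\lambda I\|$, and the closing confrontation of alternative (i) of Theorem \ref{semiFss} with the inclusion $E\subseteq V_\epsilon$ are all sound, and that last step is exactly how the paper also finishes. Where you genuinely diverge is in how $\lambda$ is produced. You take it from Theorem \ref{semiF} (some $T-\lambda I$ fails to be semi-Fredholm), so your argument is essentially the proof of Theorem \ref{ss2} plus the observation that condition (i) forces vanishing on $E$. The paper, by contrast, presents this theorem as ``much simpler'' in the SHI case and finds $\lambda$ without appealing to Theorem \ref{semiF} at that stage: it notes that $E$ is invariant under every bounded operator on $X$ (invertible operators preserve $E$), and that $T|_E$ has an eigenvalue because $E$ is either finite-dimensional or an atomic space and hence rigid; then $T-\lambda I$ annihilates a nontrivial subspace $F'\subseteq E$, factors through $X/F'$, and is therefore strictly singular, after which Theorem \ref{semiFss} upgrades this to vanishing on all of $E$. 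Your route is shorter given the results already established and needs only the raw dichotomy of Theorem \ref{semiFss} (as you correctly observe); the paper's route buys a concrete identification of $\lambda$ as an eigenvalue of $T|_E$ and a first paragraph that doubles as a more elementary proof of Theorem \ref{ss2} in the SHI setting.
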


 \begin{proof}  Let us first give a simpler proof of Theorem \ref{ss2}.  It is clearly that if $R:X\to X$ is an invertible operator then $R(E)\subset E$ and this implies that $E$ is invariant for all operators on $X$.  If $E$ is atomic then $E$ is rigid (\cite{KaltonPeckRoberts1984} Theorem 7.22, p. 155).  Otherwise $E$ is finite-dimensional.  In either case $T|_E$ has an eigenvalue $\lambda$ and so $T-\lambda I$ factors through a quotient map $Q':X\to X/F'$ where $F'$ is a non-trivial subspace of $E$.  Hence $T-\lambda I$ is strictly singular.

Now using Theorem \ref{semiFss} it is clear any strictly singular operator on $X$ vanishes on $E$ and so we get the desired factorization.\end{proof}

\begin{bibsection}
\begin{biblist}
\bib{BagariaLopezAbad2001}{article}{
  author={Bagaria, J.},
  author={L{\'o}pez-Abad, J.},
  title={Weakly Ramsey sets in Banach spaces},
  journal={Adv. Math.},
  volume={160},
  date={2001},
  pages={133--174},
}

\bib{BagariaLopezAbad2002}{article}{
  author={Bagaria, J.},
  author={L{\'o}pez-Abad, J.},
  title={Determinacy and weakly Ramsey sets in Banach spaces},
  journal={Trans. Amer. Math. Soc.},
  volume={354},
  date={2002},
  pages={1327--1349 (electronic)},
}

\bib{Figieletal2004}{article}{
  author={Figiel, T.},
  author={Frankiewicz, R.},
  author={Komorowski, R.},
  author={Ryll-Nardzewski, C.},
  title={On hereditarily indecomposable Banach spaces},
  journal={Ann. Pure Appl. Logic},
  volume={126},
  date={2004},
  pages={293--299},
}

\bib{Gowers1996}{article}{
  author={Gowers, W. T.},
  title={A new dichotomy for Banach spaces},
  journal={Geom. Funct. Anal.},
  volume={6},
  date={1996},
  pages={1083\ndash 1093},
}

\bib{Gowers2002}{article}{
  author={Gowers, W. T.},
  title={An infinite Ramsey theorem and some Banach-space dichotomies},
  journal={Ann. of Math. (2)},
  volume={156},
  date={2002},
  pages={797--833},
}

\bib{Gowers2003}{article}{
  author={Gowers, W. T.},
  title={Ramsey methods in Banach spaces},
  conference={ title={Handbook of the geometry of Banach spaces, Vol.\ 2}, },
  book={ publisher={North-Holland}, place={Amsterdam}, },
  date={2003},
  pages={1071--1097},
}

\bib{GowersMaurey1993}{article}{
  author={Gowers, W. T.},
  author={Maurey, B.},
  title={The unconditional basic sequence problem},
  journal={J. Amer. Math. Soc.},
  volume={6},
  date={1993},
  pages={851\ndash 874},
}

\bib{Kalton1974a}{article}{
  author={Kalton, N. J.},
  title={Basic sequences in $F$-spaces and their applications},
  journal={Proc. Edinburgh Math. Soc. (2)},
  volume={19},
  date={1974/75},
  pages={151--167},
}

\bib{Kalton1995}{article}{
  author={Kalton, N. J.},
  title={An elementary example of a Banach space not isomorphic to its complex conjugate},
  journal={Canad. Math. Bull.},
  volume={38},
  date={1995},
  pages={218--222},
}

\bib{Kalton1995a}{article}{
  author={Kalton, N. J.},
  title={The basic sequence problem},
  journal={Studia Math.},
  volume={116},
  date={1995},
  pages={167--187},
}

\bib{KaltonPeckRoberts1984}{book}{
  author={Kalton, N. J.},
  author={Peck, N. T.},
  author={Roberts, J. W.},
  title={An $F$-space sampler},
  series={London Mathematical Society Lecture Note Series},
  volume={89},
  publisher={Cambridge University Press},
  place={Cambridge},
  date={1984},
}

\bib{KaltonShapiro1976}{article}{
  author={Kalton, N. J.},
  author={Shapiro, J. H.},
  title={Bases and basic sequences in $F$-spaces},
  journal={Studia Math.},
  volume={56},
  date={1976},
  pages={47--61},
}

\bib{Maurey1999}{article}{
  author={Maurey, B.},
  title={A note on Gowers' dichotomy theorem},
  conference={ title={Convex geometric analysis}, address={Berkeley, CA}, date={1996}, },
  book={ series={Math. Sci. Res. Inst. Publ.}, volume={34}, publisher={Cambridge Univ. Press}, place={Cambridge}, },
  date={1999},
  pages={149--157},
}

\bib{Pelczar2003}{article}{
  author={Pelczar, A. M.},
  title={Some version of Gowers' dichotomy for Banach spaces},
  journal={Univ. Iagel. Acta Math.},
  number={41},
  date={2003},
  pages={235--243},
}

\bib{Reese1992}{article}{
  author={Reese, M. L.},
  title={Almost-atomic spaces},
  journal={Illinois J. Math.},
  volume={36},
  date={1992},
  pages={316--324},
}

\bib{Zelazko1961}{article}{
  author={{\.Z}elazko, W.},
  title={On the radicals of $p$-normed algebras},
  journal={Studia Math.},
  volume={21},
  date={1961/1962},
  pages={203--206},
}

\end{biblist}
\end{bibsection}

\end{document}